\newcommand{\Title}[1]{\bigskip\bigskip\centerline{\bf #1}\bigskip}
\newcommand{\Author}[1]{\medskip\centerline{ \it #1}}
\newcommand{\Affiliation}[1]{\medskip\centerline{#1}}
\newcommand{\Email}[1]{\medskip\centerline{#1}\bigskip}
\begin{document}

\newcommand{\N}{\mbox {$\mathbb N $}}
\newcommand{\Z}{\mbox {$\mathbb Z $}}
\newcommand{\Q}{\mbox {$\mathbb Q $}}
\newcommand{\R}{\mbox {$\mathbb R $}}
\newcommand{\lo }{\longrightarrow }
\newcommand{\ul}{\underleftarrow }
\newcommand{\rl}{\underrightarrow }
\newcommand{\rs }{\rightsquigarrow }
\newcommand{\ra }{\rightarrow }
\newcommand{\dd }{\rightsquigarrow }
\newcommand{\ol }{\overline }
\newcommand{\la }{\langle }
\newcommand{\tr }{\triangle }
\newcommand{\xr }{\xrightarrow }
\newcommand{\de }{\delta }
\newcommand{\pa }{\partial }
\newcommand{\LR }{\Longleftrightarrow }
\newcommand{\Ri }{\Rightarrow }
\newcommand{\va }{\varphi }
\newcommand{\Den}{{\rm Den}\,}
\newcommand{\Ker}{{\rm Ker}\,}
\newcommand{\Reg}{{\rm Reg}\,}
\newcommand{\Fix}{{\rm Fix}\,}
\newcommand{\Img}{{\rm Im}\,}
\newcommand{\Id}{{\rm Id}\,}

\newtheorem{theorem}{Theorem}[section]
\newtheorem{lemma}[theorem]{Lemma}
\newtheorem{proposition}[theorem]{Proposition}
\newtheorem{corollary}[theorem]{Corollary}
\newtheorem{definition}[theorem]{Definition}
\newtheorem{example}[theorem]{Example}
\newtheorem{examples}[theorem]{Examples}
\newtheorem{xca}[theorem]{Exercise}
\theoremstyle{remark}
\newtheorem{remark}[theorem]{Remark}
\numberwithin{equation}{section}

\def\leftmark{L.C. Ciungu}

\Title{MONADIC PSEUDO BE-ALGEBRAS} 
\title[Monadic pseudo BE-algebras]{}
                                                                           
\Author{\textbf{LAVINIA CORINA CIUNGU}}
\Affiliation{Department of Mathematics} 
\Affiliation{University of Iowa}
\Affiliation{14 MacLean Hall, Iowa City, Iowa 52242-1419, USA}
\Email{lavinia-ciungu@uiowa.edu}

\begin{abstract} 
In this paper we define the monadic pseudo BE-algebras and investigate their properties. 
We prove that the existential and universal quantifiers of a monadic pseudo BE-algebra form a residuated pair. 
Special properties are studied for the particular case of monadic bounded commutative pseudo BE-algebras. 
Monadic classes of pseudo BE-algebras are investigated and it is proved that the quantifiers on bounded commutative pseudo BE-algebras are also quantifiers on the corresponding pseudo MV-algebras. 
The monadic deductive systems and monadic congruences of monadic pseudo BE-algebras are defined and 
their properties are studied. 
It is proved that, in the case of a monadic distributive commutative pseudo BE-algebra there is a one-to-one 
correspondence between monadic congruences and monadic deductive systems, and the monadic quotient 
pseudo BE-algebra algebra is also defined. \\ 

\textbf{Keywords:} {Pseudo BE-algebra, commutative pseudo BE-algebra, monadic pseudo BE-algebra, monadic operators, 
monadic deductive system, monadic congruence} \\
\textbf{AMS classification (2000):} 03G25, 06F35, 003B52
\end{abstract}

\maketitle

\section{Introduction}

Many information processing branches are based on the non-classical logics and deal with
uncertainty information (fuzziness, randomness, vagueness, etc.).
Developing algebraic models for non-commutative multiple-valued logics is a central topic in the study of 
fuzzy systems. Pseudo BE-algebra is such an algebraic structure.  
Starting from the systems of positive implicational calculus, weak systems of positive implicational calculus and BCI and BCK systems, in 1966 Y. Imai and K. Is$\rm\acute{e}$ki introduced the BCK-algebras (\cite{Imai}). 
BCK-algebras are also used in a dual form, with an implication $\to$ and with one constant element $1$, that is the greatest element. 
BE-algebras have been defined in \cite{Kim1} as a generalization of BCK-algebras, 
Dual BCK-algebras were defined in \cite{Kim2} and it was proved in \cite{Wal1} that any dual BCK-algebra is a BE-algebra and any commutative BE-algebra is a dual BCK-algebra. 
Pseudo BCK-algebras were introduced by G. Georgescu and A. Iorgulescu in \cite{Geo15} as algebras 
with "two differences", a left- and right-difference, and with a constant element $0$ as the least element. Nowadays pseudo BCK-algebras are used in a dual form, with two implications, $\to$ and $\rightsquigarrow$ and with one constant element $1$, that is the greatest element. 
Thus such pseudo BCK-algebras are in the "negative cone" and are also called "left-ones".  
Pseudo BE-algebras were introduced in \cite{Bor2} as generalizations of BE-algebras and properties of 
these structures have recently been studied in \cite{Bor4} and \cite{Rez1}. 
The commutative pseudo BE-algebras were defined and studied in \cite{Ciu33}, and it was proved that the class of commutative BE-algebras is term equivalent to the class of commutative pseudo BCK-algebras.  \\
The concept of \emph{existential quantifier} on a Boolean algebra $A$ was introduced by Halmos in \cite{Halmos} 
as a map $\exists:A\longrightarrow A$, such that: 
$(i)$ $\exists \perp=\perp$ ($\perp$ is the smallest element of $A$), $(ii)$ $x\le \exists x$, 
$(iii)$ $\exists(x\wedge \exists y)=\exists x\wedge \exists y$, for all $x, y\in A$. 
The pair $(A,\exists)$ was called \emph{Boolean monadic algebra} and the theory of monadic Boolean algebras is an algebraic treatment of the logic of propositional functions of one argument, with Boolean operations and a single
(existential) quantifier. The properties of new concept have been studied by many researchers.
Monadic MV-algebras were introduced and studied in \cite{Rut1} as an algebraic model of the predicate calculus 
of the  \L ukasiewicz infinite valued logic in which only a single individual variable occurs. 
Recently, the theory of monadic MV-algebras has been developed in \cite{Bell1}, \cite{Dino1}, \cite{Geo20}.
Monadic operators were defined and investigated on various algebras of fuzzy logic: 
Heyting algebras (\cite{Bez1}), basic algebras (\cite{Cha1}), GMV-algebras (\cite{Rac1}), involutive pseudo 
BCK-algebras (\cite{Ior16}), bounded commutative R$\ell$-monoids (\cite{Rac4}), bounded residuated lattices (\cite{Rac3}), residuated lattices (\cite{Kondo1}), 
BE-algebras (\cite{Zah1}), Wajsberg hoops (\cite{Cim1}), BL-algebras (\cite{Cas1}), bounded hoops (\cite{Wang1}), 
pseudo equality algebras (\cite{Ghor1}), pseudo BCI-algebras (\cite{Xin1}), NM-algebras (\cite{Wang2}). \\

In this paper we define the monadic pseudo BE-algebras and investigate their properties. 
We prove that, if a monadic pseudo BE-algebra has the transitivity property, then the existential and universal quantifiers form a residuated pair. 
Special properties are studied for the particular case of monadic bounded commutative pseudo BE-algebras and  
a method to construct a monadic operator $(\exists,\forall)$ on these structures is given. 
For the monadic bounded commutative pseudo BE-algebras it is proved that there is a one-to-one correspondence 
between universal and existential quantifiers. 
Monadic classes of pseudo BE-algebras are investigated: pseudo BCK-algebras, pseudo BCK-semilattices, 
pseudo BCK(pP)-algebras, involutive pseudo BCK-algebras, pseudo-hoops. 
It is proved that the quantifiers on bounded commutative pseudo BE-algebras are also quantifiers on the 
corresponding pseudo MV-algebras. 
The monadic deductive systems and monadic congruences of monadic pseudo BE-algebras are defined and 
their properties are studied. 
It is proved that, in the case of a monadic distributive commutative pseudo BE-algebra there is a one-to-one 
correspondence between monadic congruences and monadic deductive systems, and the monadic quotient 
pseudo BE-algebra algebra is also defined. 
Similar results are proved in the particular case of pseudo BCK-meet-semilattices.

$\vspace*{5mm}$

\section{Preliminaries}

In this section we recall some basic notions and results regarding pseudo BE-algebras and pseudo BCK-algebras,  
as well as their classes used in the paper: pseudo BCK(pP) algebras, pseudo-hoops, pseudo MV-algebras. 

\begin{definition} \label{psBE-10} $\rm($\cite{Kuhr5}$\rm)$
A structure $(A,\rightarrow,\rightsquigarrow,1)$ of the type $(2,2,0)$ is a \emph{pseudo BCK-algebra} 
(more precisely, \emph{reversed left-pseudo BCK-algebra}) iff it satisfies the following identities and 
quasi-identity, for all $x, y, z \in A$:\\ 
$(psBCK_1)$ $(x \rightarrow y) \rightsquigarrow [(y \rightarrow z) \rightsquigarrow (x \rightarrow z)]=1;$ \\
$(psBCK_2)$ $(x \rightsquigarrow y) \rightarrow [(y \rightsquigarrow z) \rightarrow (x \rightsquigarrow z)]=1;$ \\
$(psBCK_3)$ $1 \rightarrow x = x;$ \\
$(psBCK_4)$ $1 \rightsquigarrow x = x;$ \\
$(psBCK_5)$ $x \rightarrow 1 = 1;$ \\
$(psBCK_6)$ $(x\rightarrow y =1$ and $y \rightarrow x=1)$ implies $x=y$. 
\end{definition}

The partial order $\le$ is defined by $x \le y$ iff $x \ra y =1$ (iff $x \rs y =1$). 
If the poset $(A, \le)$ is a meet-semilattice then ${\mathcal A}$ is called a \emph{pseudo BCK-meet-semilattice} 
and we denote it by $\mathcal{A}=(A, \wedge, \ra, \rs, 1)$.       
If the poset $(A, \le)$ is a join-semilattice then ${\mathcal A}$ is called a \emph{pseudo BCK-join-semilattice} 
and we denote it by $\mathcal{A}=(A, \vee, \ra, \rs, 1)$.
If $(A,\leq)$ is a lattice then we will say that ${\mathcal A}$ is a \emph{pseudo BCK-lattice} and 
it is denoted by $\mathcal{A}=(A, \wedge, \vee, \ra, \rs, 1)$. \\
For details regarding pseudo BCK-algebras we refer the reader to \cite{Ior14}, \cite{Ior15}, \cite{Kuhr6}, 
\cite{Ciu2}, \cite{Kuhr2}. \\
If there is an element $0$ of a pseudo BCK-algebra $(A, \rightarrow, \rightsquigarrow, 1)$, such that $0\le x$ 
(i.e. $0\rightarrow x=0\rightsquigarrow x=1$), for all $x\in A$, then the pseudo BCK-algebra is said to be  
\emph{bounded} and it is denoted by $(A, \rightarrow, \rightsquigarrow, 0, 1)$. 
In a bounded pseudo BCK-algebra $(X, \rightarrow, \rightsquigarrow, 0, 1)$ we define two negations: \\
$\hspace*{3cm}$ $x^{-}:= x\rightarrow 0$, $x^{\sim}:= x\rightsquigarrow 0$, \\
for all $x\in X$. 

\begin{proposition} \label{psBE-20} $\rm($\cite{Ior1}$\rm)$ Let $(A,\rightarrow,\rightsquigarrow,1)$ be a pseudo BCK-algebra. 
Then the following hold for all $x, y, z\in A$: \\
$(1)$ $x\rightarrow (y\rightsquigarrow z)=y\rightsquigarrow (x\rightarrow z);$ \\
$(2)$ $x \leq y$ implies $y \rightarrow z \leq x \rightarrow z$ and
      $y \rightsquigarrow z \leq x \rightsquigarrow z;$ \\
$(3)$ $x \leq y$ implies $z \rightarrow x \leq z \rightarrow y$ and
      $z\rightsquigarrow x \leq z \rightsquigarrow y;$ \\
$(4)$ $x\rightarrow y\le (z\rightarrow x)\rightarrow (z\rightarrow y)$ and 
      $x\rightsquigarrow y\le (z\rightsquigarrow x)\rightsquigarrow (z\rightsquigarrow y);$ \\
$(5)$ $x\le (x\ra y)\rs y$ and $x\le (x\rs y)\ra y$.                       
\end{proposition}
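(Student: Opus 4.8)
The plan is to treat all five items as purely order-theoretic consequences of the quasi-identities $(psBCK_1)$--$(psBCK_6)$, exploiting that $\le$ is witnessed by either arrow ($a\le b$ iff $a\ra b=1$ iff $a\rs b=1$, as recorded in the preliminaries). First I would lay out the elementary facts to be reused: $x\le 1$ (immediate from $(psBCK_5)$); reflexivity $x\le x$ (from the instance $z\rs(x\ra z)=1$ obtained by setting the middle variable of $(psBCK_1)$ equal to $1$ and simplifying with $(psBCK_5),(psBCK_3),(psBCK_4)$, then specialising $x:=1$ and using $(psBCK_3)$); and transitivity of $\le$ (feed $x\ra y=1$ and $y\ra z=1$ into $(psBCK_1)$ and clean up with $(psBCK_3),(psBCK_4)$). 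Antisymmetry is exactly $(psBCK_6)$, so $\le$ is a partial order.

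Items (2), (3) and (5) then fall out by direct substitution. For (2), setting $x\ra y=1$ in $(psBCK_1)$ and applying $(psBCK_4)$ yields $(y\ra z)\rs(x\ra z)=1$, i.e.\ $y\ra z\le x\ra z$; the $\rs$-part is symmetric via $(psBCK_2)$. For (3), the relabelled instance $(z\ra x)\rs\big[(x\ra y)\rs(z\ra y)\big]=1$ of $(psBCK_1)$, together with $x\ra y=1$ and $(psBCK_4)$, gives $z\ra x\le z\ra y$, and dually from $(psBCK_2)$. For (5), putting the first variable equal to $1$ in $(psBCK_1)$ and simplifying with $(psBCK_3)$ produces $x\le(x\ra y)\rs y$, and the second half comes dually from $(psBCK_2)$ and $(psBCK_4)$.

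The main obstacle is the exchange identity (1), $x\ra(y\rs z)=y\rs(x\ra z)$, which I would establish by proving each inequality and invoking antisymmetry $(psBCK_6)$. For $y\rs(x\ra z)\le x\ra(y\rs z)$ I would instantiate $(psBCK_2)$ as $y\rs(x\ra z)\le\big((x\ra z)\rs z\big)\ra(y\rs z)$, then use (5) in the form $x\le(x\ra z)\rs z$ together with the antitonicity (2) to obtain $\big((x\ra z)\rs z\big)\ra(y\rs z)\le x\ra(y\rs z)$, and finally transitivity. The reverse inequality $x\ra(y\rs z)\le y\rs(x\ra z)$ is obtained symmetrically from the instance $x\ra(y\rs z)\le\big((y\rs z)\ra z\big)\rs(x\ra z)$ of $(psBCK_1)$, the dual half of (5), and (2). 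The delicate point is choosing the substitutions in $(psBCK_1)/(psBCK_2)$ so that (5) and (2) splice together correctly, and keeping careful track of which arrow witnesses each occurrence of $\le$; this bookkeeping, not any single deep step, is where the proof can go wrong.

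Finally, (4) is a short corollary of (1). Starting from the $(psBCK_1)$-instance $(z\ra x)\ra\big[(x\ra y)\rs(z\ra y)\big]=1$ (which records $z\ra x\le(x\ra y)\rs(z\ra y)$), I would apply the exchange law (1) to rewrite the left-hand side as $(x\ra y)\rs\big[(z\ra x)\ra(z\ra y)\big]$, so that its equalling $1$ is precisely $x\ra y\le(z\ra x)\ra(z\ra y)$. The $\rs$-part follows the same way from $(psBCK_2)$ and the dual exchange identity $x\rs(y\ra z)=y\ra(x\rs z)$, which holds by the left--right symmetry of the axioms.
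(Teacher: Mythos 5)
Your proof is correct, but there is nothing in the paper to compare it against: Proposition \ref{psBE-20} is imported verbatim from \cite{Ior1} with no proof given, so your self-contained equational derivation from $(psBCK_1)$--$(psBCK_6)$ is genuinely more than the paper offers. I checked the substitutions and they all work: the instance of $(psBCK_2)$ with $a:=y$, $b:=x\ra z$, $c:=z$ does yield $y\rs(x\ra z)\le \big((x\ra z)\rs z\big)\ra(y\rs z)$, and splicing in item (5) via the antitonicity (2) gives one half of the exchange law; the mirror instance of $(psBCK_1)$ with $b:=y\rs z$ gives the other half, and $(psBCK_6)$ closes it. Two small points deserve flagging. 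First, your whole argument (reflexivity via $\rs$, antisymmetry from mixed-arrow inequalities, the ``left--right symmetry'' used for the dual exchange identity $x\rs(y\ra z)=y\ra(x\rs z)$ --- note the axiom list is \emph{not} literally symmetric, since $(psBCK_5)$ and $(psBCK_6)$ are stated only for $\ra$) leans on the equivalence $x\ra y=1$ iff $x\rs y=1$, which the paper's preliminaries record without proof; you correctly identify this reliance at the outset, so the argument is sound relative to what the paper itself assumes, but a fully axiomatic proof would have to establish that equivalence first. Second, in item (4) the literal $(psBCK_1)$-instance has outer connective $\rs$, i.e.\ $(z\ra x)\rs\big[(x\ra y)\rs(z\ra y)\big]=1$, not the $\ra$-form you wrote; the $\ra$-witnessed version you need in order to apply the exchange identity (1) as a rewriting does follow from the same two-arrow equivalence, so the step survives, but as written it conflates the two forms. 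With those caveats noted, the decomposition --- order facts first, then (2), (3), (5) by direct substitution, then (1) by antisymmetry of two spliced inequalities, then (4) as a corollary of (1) --- is exactly the standard route in the pseudo-BCK literature, and every step checks out.
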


\begin{proposition} \label{psBE-25} $\rm($\cite{Ior1}$\rm)$ Let $(A,\ra,\rs,0,1)$ be a bounded pseudo BCK-algebra. 
Then the following hold for all $x, y\in A$: \\
$(1)$ $1^{-}=1^{\sim}=0$, $0^{-}=0^{\sim}=1;$ \\
$(2)$ $x\le x^{-\sim}$, $x\le x^{\sim-};$ \\
$(3)$ $x\le y$ implis $y^{-}\le x^{-}$ and $y^{\sim}\le x^{\sim};$ \\
$(4)$ $x^{-\sim-}=x^{-}$, $x^{\sim-\sim}=x^{\sim};$ \\
$(5)$ $x \rightarrow y^{-\sim}=y^- \rightsquigarrow x^{-} = x^{-\sim} \rightarrow y^{-\sim};$ \\
$(6)$ $x \rightsquigarrow y^{\sim-}=y^{\sim} \rightarrow x^{\sim} = x^{\sim-} \rightsquigarrow y^{\sim-}$. 
\end{proposition}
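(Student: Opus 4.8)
The plan is to derive all six items by specializing the already-available identities of Proposition \ref{psBE-20} and the axioms of Definition \ref{psBE-10} to the constant $0$, and then to bootstrap the later items from the earlier ones. Items $(1)$--$(3)$ are immediate specializations, item $(4)$ follows from $(2)$ and $(3)$ together with the antisymmetry quasi-identity $(psBCK_6)$, and items $(5)$--$(6)$ combine the exchange law with $(4)$. No new machinery beyond the preliminaries is needed.

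For $(1)$ I would read $1^{-}=1\ra 0=0$ off $(psBCK_3)$ and $1^{\sim}=1\rs 0=0$ off $(psBCK_4)$, while $0^{-}=0\ra 0=1$ and $0^{\sim}=0\rs 0=1$ follow from $0$ being the least element. For $(2)$, setting $y=0$ in Proposition \ref{psBE-20}$(5)$ gives $x\le (x\ra 0)\rs 0=x^{-\sim}$ and $x\le (x\rs 0)\ra 0=x^{\sim-}$. For $(3)$, setting $z=0$ in Proposition \ref{psBE-20}$(2)$ turns $x\le y$ into $y\ra 0\le x\ra 0$ and $y\rs 0\le x\rs 0$, that is $y^{-}\le x^{-}$ and $y^{\sim}\le x^{\sim}$; thus $(3)$ merely records that both negations are order-reversing.

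For $(4)$ I would prove the two inequalities separately. Applying the order-reversing property $(3)$ to the inequality $x\le x^{-\sim}$ of $(2)$ yields $x^{-\sim-}=(x^{-\sim})^{-}\le x^{-}$. Conversely, applying $(2)$ to the element $x^{-}$ in the form $a\le a^{\sim-}$ gives $x^{-}\le (x^{-})^{\sim-}=x^{-\sim-}$. Antisymmetry $(psBCK_6)$ then forces $x^{-\sim-}=x^{-}$, and $x^{\sim-\sim}=x^{\sim}$ follows by the symmetric argument with the roles of $\ra,\rs$ (and of $-,\sim$) interchanged.

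The main work is in $(5)$, with $(6)$ its mirror image. I would first establish the single chain $x\ra y^{-\sim}=x\ra (y^{-}\rs 0)=y^{-}\rs (x\ra 0)=y^{-}\rs x^{-}$, where the middle step is the exchange identity Proposition \ref{psBE-20}$(1)$ applied with second and third arguments $y^{-}$ and $0$. This already gives the first equality; since the derivation holds for an arbitrary left-hand factor, replacing $x$ by $x^{-\sim}$ yields $x^{-\sim}\ra y^{-\sim}=y^{-}\rs (x^{-\sim})^{-}=y^{-}\rs x^{-\sim-}$, and the triple-negation collapse $x^{-\sim-}=x^{-}$ from $(4)$ closes the loop back to $y^{-}\rs x^{-}$. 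The delicate point, and the one step I expect to need care, is the correct slotting of $0$ into the exchange law so that the right arrow and the right superscript appear at each stage; once this bookkeeping is done the two identities of $(5)$ fall out, and $(6)$ is obtained verbatim from the mirror form of the exchange law after swapping $\ra\leftrightarrow\rs$ and $-\leftrightarrow\sim$.
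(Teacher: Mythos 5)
Your proof is correct in every step. Note that the paper itself offers no proof of this proposition: it is quoted verbatim from \cite{Ior1} as background, so there is nothing internal to compare against; but your derivation is complete and uses only the stated preliminaries. Specifically: $(1)$ follows from $(psBCK_3)$, $(psBCK_4)$ and boundedness applied at $x=0$; $(2)$ and $(3)$ are exactly the specializations $y:=0$ in Proposition \ref{psBE-20}$(5)$ and $z:=0$ in Proposition \ref{psBE-20}$(2)$; your two-inequality argument for $(4)$ (apply $(3)$ to $x\le x^{-\sim}$ for one direction, apply $(2)$ in the form $a\le a^{\sim-}$ with $a=x^{-}$ for the other, then invoke $(psBCK_6)$) is the standard argument and is sound; and the ``bookkeeping'' you flagged in $(5)$ checks out, since Proposition \ref{psBE-20}$(1)$ with $y:=y^{-}$, $z:=0$ gives $x\ra(y^{-}\rs 0)=y^{-}\rs(x\ra 0)$ directly, after which substituting $x^{-\sim}$ for $x$ and collapsing via $(4)$ closes the chain. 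One small point worth making explicit: the ``mirror form'' of the exchange law needed for $(6)$, namely $x\rs(y\ra z)=y\ra(x\rs z)$, is not a separate fact — it is Proposition \ref{psBE-20}$(1)$ itself with the roles of the first two variables interchanged, so no appeal to the pseudo BE-algebra identity of Proposition \ref{psBE-40}$(1)$ is required.
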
             

A pseudo BCK-algebra with the \emph{(pP) condition} (i.e. with the  \emph{pseudo-product} condition) or
a \emph{pseudo BCK(pP)-algebra} for short, is a pseudo BCK-algebra $(A,\leq,\ra,\rs,1)$ satisfying the condition:\\
(pP) For all $x, y \in A$, $x\odot y$ exists, where  \\
$\hspace*{2cm}$ $x\odot y=\min\{z \mid x \leq y \ra z\}=\min\{z \mid y \leq x \rs z\}$. \\
In any bounded pseudo BCK(pP)-algebra $A$ the following hold, for all $x, y\in A$ (see \cite{Ior1}): 
$x\odot 0=0\odot x=0$, $x^{-}\odot x=x\odot x^{\sim}=0$, $x\le y^{-}$ iff $y\le x^{\sim}$. 
     
\begin{proposition} \label{psBE-25-10} $\rm($\cite{Ciu2}$\rm)$ Let $(A,\odot,\ra,\rs,1)$ be a pseudo BCK(pP)-algebra. 
Then the following hold for all $x, y, z\in A$: \\
$(1)$ $x\odot y\le x,y;$ \\
$(2)$ $(x\ra y)\odot x\le x,y$ and $x\odot (x\rs y)\le x,y;$ \\
$(3)$ $x\le y$ implies $x\odot z\le y\odot z$ and $z\odot x\le z\odot y;$ \\
$(4)$ $x\ra y\le x\odot z\ra y\odot z$ and $x\rs y\le z\odot x\ra z\odot y;$ \\
$(5)$ $x\ra (y\ra z)=x\odot y\ra z$ and $x\rs (y\rs z)=y\odot x\rs z$.    
\end{proposition}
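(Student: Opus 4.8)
The argument rests on a single residuation law, which I would establish first: for all $x,y,z\in A$,
\[
x\odot y\le z \ \Longleftrightarrow\ x\le y\ra z \ \Longleftrightarrow\ y\le x\rs z.
\]
The implication $x\le y\ra z\Rightarrow x\odot y\le z$ is immediate from $x\odot y=\min\{t\mid x\le y\ra t\}$: if $x\le y\ra z$ then $z$ lies in this set, so the minimum $x\odot y$ is $\le z$. Conversely, the membership $x\le y\ra(x\odot y)$, together with Proposition~\ref{psBE-20}(3) (which gives $y\ra(x\odot y)\le y\ra z$ whenever $x\odot y\le z$) and transitivity of $\le$, yields $x\le y\ra z$. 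The equivalence with $y\le x\rs z$ is obtained in the same way from the second description of $x\odot y$ as a minimum.

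Parts $(1)$–$(3)$ then follow mechanically from this law. For $(1)$, $x\odot y\le y$ amounts to $x\le y\ra y=1$ and $x\odot y\le x$ amounts to $y\le x\rs x=1$, both trivially true. For $(2)$, $(x\ra y)\odot x\le y$ is, by the law, exactly the reflexivity statement $x\ra y\le x\ra y$, and $x\odot(x\rs y)\le y$ is exactly $x\rs y\le x\rs y$; the bounds by $x$ come from $(1)$. For $(3)$, from the membership $y\le z\ra(y\odot z)$ and $x\le y$ I get $x\le z\ra(y\odot z)$, hence $x\odot z\le y\odot z$ by the law; the left-multiplication case is symmetric, using $y\le z\rs(z\odot y)$.

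The real content is $(5)$, the identity $x\ra(y\ra z)=(x\odot y)\ra z$, and I expect this to be the main obstacle, since naively it is equivalent to associativity of $\odot$. I would prove it directly, by antisymmetry. For the inequality $\le$, set $u=(x\odot y)\ra z$; the residuation law gives successively $u\odot(x\odot y)\le z$, then $x\odot y\le u\rs z$, then $x\le y\ra(u\rs z)$. The crucial move is now the exchange identity of Proposition~\ref{psBE-20}(1), which gives $y\ra(u\rs z)=u\rs(y\ra z)$, so $x\le u\rs(y\ra z)$; by the law this means $u\odot x\le y\ra z$, i.e. $u\le x\ra(y\ra z)$. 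For the reverse inequality I would run the same chain backwards starting from $v=x\ra(y\ra z)$, again invoking Proposition~\ref{psBE-20}(1) in the form $v\rs(y\ra z)=y\ra(v\rs z)$. The dual identity $x\rs(y\rs z)=(y\odot x)\rs z$ is proved symmetrically.

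Finally $(4)$ is a short consequence of $(5)$, so I would prove it last. Since $y\odot z\le y\odot z$ gives $y\le z\ra(y\odot z)$, monotonicity of $\ra$ in the second argument (Proposition~\ref{psBE-20}(3)) yields $x\ra y\le x\ra\bigl(z\ra(y\odot z)\bigr)$, and the right-hand side equals $(x\odot z)\ra(y\odot z)$ by $(5)$; the companion inequality for $\rs$ follows dually from the dual form of $(5)$. Thus the only delicate point is $(5)$, where the exchange law of Proposition~\ref{psBE-20}(1) is precisely what breaks the apparent circularity with associativity, while everything else is a direct application of the residuation law.
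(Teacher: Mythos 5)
Your proof is correct: the residuation equivalence $x\odot y\le z \iff x\le y\ra z \iff y\le x\rs z$ is exactly the adjunction that the (pP) condition provides, your derivation of it from the two minimum descriptions together with Proposition~\ref{psBE-20}(3) and transitivity is sound, and your use of the exchange identity of Proposition~\ref{psBE-20}(1) to prove $(5)$ by antisymmetry — which is indeed the only nontrivial point, and which correctly sidesteps the apparent circularity with associativity of $\odot$ — checks out step by step, as do the mechanical reductions of $(1)$--$(4)$. The paper itself gives no proof (the proposition is imported from \cite{Ciu2}), and your argument is the standard residuation-based one used there, so the only thing worth noting is that in $(4)$ you silently adopted the correct reading $x\rs y\le (z\odot x)\rs (z\odot y)$ of the statement's second inequality, whose printed $\ra$ is a typo.
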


Pseudo BE-algebras were introduced in \cite{Bor2} as generalizations of BE-algebras and properties of these 
structures have recently been studied in \cite{Bor4} and \cite{Rez1}. 

\begin{definition} \label{psBE-30} $\rm($\cite{Bor2}$\rm)$
A \emph{pseudo BE-algebra} is an algebra $(A, \rightarrow, \rightsquigarrow, 1)$ of the type $(2, 2, 0)$ 
such that the following axioms are fulfilled for all $x, y, z\in A$: \\
$(psBE_1)$ $x\rightarrow x=x\rightsquigarrow x=1,$ \\
$(psBE_2)$ $x\rightarrow 1=x\rightsquigarrow 1=1,$ \\
$(psBE_3)$ $1\rightarrow x=1\rightsquigarrow x=x,$ \\
$(psBE_4)$ $x\rightarrow (y\rightsquigarrow z)=y\rightsquigarrow (x\rightarrow z),$ \\
$(psBE_5)$ $x\rightarrow y=1$ iff $x\rightsquigarrow y=1$. 
\end{definition}

\begin{proposition} \label{psBE-40} $\rm($\cite{Bor2}$\rm)$ Let $(A,\rightarrow,\rightsquigarrow,1)$ be a pseudo BE-algebra. 
Then the following hold for all $x, y, z\in A$: \\
$(1)$ $x\rightsquigarrow (y\rightarrow z)=y\rightarrow (x\rightsquigarrow z);$ \\
$(2)$ $x\rightarrow (y\rightsquigarrow x)=1$ and $x\rightsquigarrow (y\rightarrow x)=1;$ \\
$(3)$ $x\rightarrow (y\rightarrow x)=1$ and $x\rightsquigarrow (y\rightsquigarrow x)=1;$ \\
$(4)$ $x\rightarrow ((x\rightarrow y)\rightsquigarrow y)=1$ and 
      $x\rightsquigarrow ((x\rightsquigarrow y)\rightarrow y)=1$.  
\end{proposition}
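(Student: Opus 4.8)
The plan is to obtain all four identities purely equationally, by suitable substitutions into the exchange axiom $(psBE_4)$ together with $(psBE_1)$, $(psBE_2)$ and $(psBE_5)$. This is the natural route because the defining axioms include neither transitivity nor antisymmetry for $\le$, so order-theoretic arguments are unavailable and each equality must be produced directly.

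I would begin with $(1)$, which is $(psBE_4)$ in disguise. The axiom reads $x\ra(y\rs z)=y\rs(x\ra z)$ for all $x,y,z$; instantiating it with the first two variables interchanged gives $y\ra(x\rs z)=x\rs(y\ra z)$, i.e. $x\rs(y\ra z)=y\ra(x\rs z)$, which is exactly $(1)$. So $(1)$ costs nothing beyond noticing that the exchange law is invariant when the two arrows and the two leading arguments are swapped simultaneously.

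For $(2)$ I would put the third variable equal to $x$. Then $(psBE_4)$ gives $x\ra(y\rs x)=y\rs(x\ra x)=y\rs 1=1$ using $(psBE_1)$ and $(psBE_2)$, and the dual exchange $(1)$ gives $x\rs(y\ra x)=y\ra(x\rs x)=y\ra 1=1$ in the same way. Part $(4)$ follows by the identical pattern applied to a compound inner term: $(psBE_4)$ with second argument $x\ra y$ and third argument $y$ yields $x\ra((x\ra y)\rs y)=(x\ra y)\rs(x\ra y)=1$ by $(psBE_1)$, and $(1)$ with second argument $x\rs y$ and third argument $y$ yields $x\rs((x\rs y)\ra y)=(x\rs y)\ra(x\rs y)=1$.

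The one step that is not a bare substitution is $(3)$, whose two nested operations are the \emph{same} arrow; the exchange law, whose purpose is to move one arrow past the other, cannot act on a same-arrow nesting. Here I would pass through $(psBE_5)$: to get $x\ra(y\ra x)=1$, apply $(psBE_5)$ with second argument $y\ra x$, reducing it to $x\rs(y\ra x)=1$, which is the second identity of $(2)$; symmetrically, $x\rs(y\rs x)=1$ is equivalent under $(psBE_5)$ to $x\ra(y\rs x)=1$, the first identity of $(2)$. Thus the only genuine idea is this use of $(psBE_5)$ to trade a same-arrow nesting for a mixed one that the exchange law can handle; everything else is routine tracking of substitutions, and I expect no real obstacle.
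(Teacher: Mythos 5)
Your proof is correct in every step: $(1)$ is indeed $(psBE_4)$ with the first two variables renamed, $(2)$ and $(4)$ follow from the substitutions $z:=x$ and $(y,z):=(x\ra y,\,y)$ (resp.\ $(x\rs y,\,y)$) together with $(psBE_1)$ and $(psBE_2)$, and passing through $(psBE_5)$ to reduce the same-arrow nestings of $(3)$ to the mixed ones in $(2)$ is exactly the right device, since the exchange axiom alone cannot touch a same-arrow nesting. The paper states this proposition without proof, citing \cite{Bor2}, and your derivation coincides with the standard one given there, so there is nothing to add.
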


We will refer to $(A,\ra,\rs,1)$ by its universe $A$. 
In a pseudo BE-algebra $A$ one can introduce a binary relation ``$\le$" by $x \le y$ if and only if $x\ra y=1$  
( if and only if $x\rs y=1$), for all $x, y\in A$. 
It was proved in \cite{Ciu33} that any pseudo BCK-algebra is a pseudo BE-algebra.
A pseudo BE-algebra $A$ is said to be \emph{distributive} (\cite{Bor4}) if it satisfies only one of the following conditions, for all $x, y, z\in A:$ \\
$(i)$ $x\ra (y\rs z)=(x\ra y)\rs (x\ra z);$ \\
$(ii)$ $x\rs (y\ra z)=(x\rs y)\ra (x\rs z)$. \\
In this paper $A$ is a distributive pseudo BE-algebra if it satisfies condition $(i)$. 
Note that, if a pseudo BE-algebra $A$ satisfies the both conditions $(i)$ and $(ii)$, then $A$ is a BE-algebra 
(see \cite{Bor4}). 

\begin{definition} \label{psBE-90} A pseudo BE-algebra with $(A)$ condition or a pseudo BE(A)-algebra for short, 
is a pseudo BE-algebra $(A, \ra, \rs, 1)$ such that the operations $\ra$, $\rs$ are antitone in the first variable, that is the following condition is satisfied:\\
$(A)$ if $x, y\in A$ such that $x\le y$, then: \\
$\hspace*{3cm}$ $y\ra z\le x\ra z$ and $y\rs z\le x\rs z,$ \\
for all $z\in A$. \\
A pseudo BE-algebra with $(M)$ condition or a pseudo BE(M)-algebra for short, is a pseudo BE-algebra 
$(A, \ra, \rs, 1)$ such that the operations $\ra$, $\rs$ are monotone in the first variable, 
that is the following condition is satisfied:\\
$(M)$ if $x, y\in A$ such that $x\le y$, then: \\
$\hspace*{3cm}$ $z\ra x\le z\ra y$ and $z\rs x\le z\rs y,$ \\
for all $z\in A$. \\
A pseudo BE-algebra with $(T)$ condition or a pseudo BE(T)-algebra for short, is a pseudo BE-algebra 
$(A, \ra, \rs, 1)$ such that the following condition is satisfied:\\
$(T)$ if $x\le y$ and $y\le z$, then $x\le z$ for all $x, y, z\in A$ (``$\le$" is transitive).\\
Any pseudo BCK-algebra is a pseudo BE(A)-algebra, a pseudo BE(M)-algebra and a pseudo BE(T)-algebra.
\end{definition} 

\begin{remark} \label{psBE-100}
If $A$ is a pseudo BE(A)-algebra, then the relation ``$\le$" is transitive. 
Indeed, let $x, y, z\in A$ such that $x\le y$ and $y\le z$. It follows that $y\ra z\le x\ra z$, 
so $1\le x\ra z$, that is $x\ra z=1$. Hence $x\le z$, that is the relation ``$\le$" is transitive. 
With other words, any pseudo BE(A)-algebra is a pseudo BE(T)-algebra. 
\end{remark}

\begin{definition} \label{psBE-110} $\rm($\cite{Geo16}$\rm)$
A \emph{pseudo-hoop} is an algebra $(A,\odot, \rightarrow, \rightsquigarrow,1)$ of the type $(2,2,2,0)$ such that, for all $x, y, z \in A:$\\
$(psH_1)$ $x\odot 1=1\odot x=x;$\\
$(psH_2)$ $x\rightarrow x=x\rightsquigarrow x=1;$\\
$(psH_3)$ $(x\odot y) \rightarrow z=x\rightarrow (y\rightarrow z);$\\
$(psH_4)$ $(x\odot y) \rightsquigarrow z=y\rightsquigarrow (x\rightsquigarrow z);$\\
$(psH_5)$ $(x\rightarrow y)\odot x=(y\rightarrow x)\odot y=x\odot(x\rightsquigarrow y)=y\odot(y\rightsquigarrow x)$.
\end{definition}

The non-commutative generalizations of MV-algebras called pseudo MV-algebras were introduced by G. Georgescu 
and A. Iorgulescu in \cite{Geo2} and independently by J. Rach{\accent23u}nek (\cite{Rac2}) under the name of generalized MV-algebras. 

\begin{definition} \label{psBE-120} $\rm($\cite{Geo2}$\rm)$
A pseudo MV-algebra is a structure $(A, \oplus, \odot, ^{-}, ^{\sim}, 0, 1)$ of type $(2,2,1,1,0,0)$ such that the following axioms hold for all $x, y, z \in A:$\\
$(psMV_1)$ $x\oplus (y \oplus z) = (x \oplus y) \oplus z;$\\
$(psMV_2)$ $x \oplus 0 = 0 \oplus x = x;$\\
$(psMV_3)$ $x \oplus 1 = 1 \oplus x =1;$ \\
$(psMV_4)$ $1^{-}=0$, $1^{\sim}=0;$\\
$(psMV_5)$ $(x^{-} \oplus y^{-})^{\sim}=(x^{\sim} \oplus y^{\sim})^{-};$\\
$(psMV_6)$ $x \oplus (x^{\sim} \odot y)=y \oplus (y^{\sim} \odot x) = (x \odot y^{-}) \oplus y =
            (y \odot x^{-}) \oplus x;$\\
$(psMV_7)$ $x \odot (x^{-} \oplus y) = (x \oplus y^{\sim}) \odot y;$\\
$(psMV_8)$ $(x^{-})^{\sim}=x,$\\
where $x \odot y:= (y^{-} \oplus x^{-})^{\sim}$. 
\end{definition}

We define $x \leq y$ iff $x^{-}\oplus y=1$ and ``$\leq$" defines an order relation on $A$. 
Moreover, $(A,\le)$ is a distributive lattice with the lattice operations defined as below: \\
$\hspace*{2cm}$ $x \vee y:= x \oplus x^{\sim} \odot y=y \oplus y^{\sim} \odot x = 
                 x \odot y^{-} \oplus y = y \odot x^{-} \oplus x,$ \\
$\hspace*{2cm}$ $x \wedge y:= x \odot (x^{-} \oplus y) = y \odot (y^{-} \oplus x)=
                (x \oplus y^{\sim}) \odot y =(y \oplus x^{\sim}) \odot x.$ \\

$\vspace*{5mm}$

\section{Monadic pseudo BE-algebras}

In this section we introduce the notion of a monadic pseudo BE-algebra and study their properties. 
Given a monadic pseudo BE-algebra $(A,\exists,\forall)$, we show that $\exists$ is a closure operator, while 
$\forall$ is an interior operator on $A$. It is proved that, if $\forall$ is surjective, then $\forall=Id_A$. 
Moreover, if $A$ satisfies the transitivity property, then $(\exists,\forall)$ is a residuated pair, that is 
$\exists x\le y$ iff $x\le \forall y$. Some properties of a monadic bounded pseudo BE-algebra are also proved. 
We define the set $A_{\exists\forall}$ of fixed elements of $(A,\exists,\forall)$, and we prove that, if 
$A_{\exists\forall}=A$, then $\forall =\exists=Id_A$. 

\begin{definition} \label{mpsBE-10} Let $(A,\ra,\rs,1,\exists,\forall)$ be an algebra of type $(2,2,0,1,1)$ such that 
$(A,\ra,\rs,1)$ is a pseudo BE-algebra. Then $(A,\exists,\forall)$ is called a \emph{monadic pseudo BE-algebra} 
if the following conditions are satisfied, for all $x, y\in A:$ \\
$(M_1)$ $x\ra \exists x=x\rs \exists x=1;$ \\
$(M_2)$ $\forall x\ra x=\forall x\rs x=1;$ \\
$(M_3)$ $\forall(x\ra \exists y)=\exists x\ra \exists y$, $\forall(x\rs \exists y)=\exists x\rs \exists y;$ \\
$(M_4)$ $\forall(\exists x\ra y)=\exists x\ra \forall y$, $\forall(\exists x\rs y)=\exists x\rs \forall y;$ \\
$(M_5)$ $\exists\forall x=\forall x$.  
\end{definition}

We use the notations $\exists x$ and $\forall x$ instead of $\exists (x)$ and $\forall (x)$, respectively. 
The unary operators $\exists: A\longrightarrow A$ and $\forall: A\longrightarrow A$ are called the  
\emph{existential quantifier} and \emph{universal quantifier} on $A$, respectively. 
The pair $(\exists, \forall)$ is said to be a \emph{monadic operator} on $A$. 
Denote by $\mathcal{MOP}(A)$ the set of all monadic operators on $A$. 

\begin{example} \label{mpsBE-10-10}
Obviously, if $A$ is a pseudo BE-algebra, then $(A,Id_A,Id_A)$ is a monadic pseudo BE-algebra, where 
$Id_A:A\longrightarrow A$, $Id_A(x)=x$, for all $x\in A$.
Hence, any pseudo BE-algebra is a monadic pseudo BE-algebra, that is $\mathcal{MOP}(A)\ne \emptyset$. 
\end{example}

\begin{example} \label{mpsBE-10-20}  
Consider the set $A=\{1,a,b,c\}$ and the operations $\ra,\rs$ given by the following tables:
\[
\hspace{10mm}
\begin{array}{c|ccccc}
\ra & 1 & a & b & c \\ \hline
1 & 1 & a & b & c \\ 
a & 1 & 1 & 1 & a \\ 
b & 1 & 1 & 1 & 1 \\ 
c & 1 & 1 & 1 & 1
\end{array}
\hspace{10mm} 
\begin{array}{c|ccccc}
\rs & 1 & a & b & c \\ \hline
1 & 1 & a & b & c \\ 
a & 1 & 1 & 1 & b \\ 
b & 1 & 1 & 1 & 1 \\ 
c & 1 & 1 & 1 & 1
\end{array}
. 
\]
The structure $(A,\ra,\rs,1)$ is a pseudo BE-algebra (\cite{Rez1}). Since $a\ra b=1$ and $b\ra a=1$, but $a\neq b$, 
axiom $(psBCK_6)$ is not satisfied, so $A$ is not a pseudo BCK-algebra. 
Consider the maps $\exists_i, \forall_i:A\longrightarrow A$, $i=1,2,3$, given in the table below:
\[
\begin{array}{c|cccccc}
 x & 1 & a & b & c  \\ \hline
\exists_1 x & 1 & a & b & c \\
\forall_1 x & 1 & a & b & c \\ \hline
\exists_2 x & 1 & b & b & b \\
\forall_2 x & 1 & b & b & b \\ \hline
\exists_3 x & 1 & 1 & c & c \\
\forall_3 x & 1 & c & c & c  
\end{array}
.   
\]
Then $\mathcal{MOP}(A)=\{(\exists_1,\forall_1),(\exists_2,\forall_2),(\exists_3,\forall_3)\}$.
\end{example}

\begin{example} \label{mpsBE-10-30}
Let $A=\{1,a,b,c,d\}$. Define the operations $\ra$ and $\rs$ on $A$ as follows:
\begin{eqnarray*}
\begin{array}{c|ccccc} \ra & 1 & a & b & c & d\\
\hline 1 & 1 & a & b & c & d\\
       a & 1 & 1 & c & c & 1 \\
       b & 1 & d & 1 & 1 & d\\
       c & 1 & d & 1 & 1 & d \\
       d & 1 & 1 & c & c & 1
\end{array}
\hspace{2cm}
\begin{array}{c|ccccc} \rs & 1 & a & b & c & d \\
\hline 1 & 1 & a & b & c  & d \\
       a & 1 & 1 & b & c & 1 \\
       b & 1 & d & 1 & 1 & d \\
       c & 1 & d & 1 & 1 & d \\
       d & 1 & 1 & b & c & 1      
\end{array}
.
\end{eqnarray*}
The structure $(A, \ra, \rs, 1)$ is a pseudo BE(A)-algebra.  
Moreover, $b\ra c=1$ and $c\ra b=1$, but $b\ne c$, so axiom $(psBCK_6)$ is not satisfied, hence $A$ is 
not a pseudo BCK-algebra.  
Consider the maps $\exists_i, \forall_i:A\longrightarrow A$, $i=1,2,3,4$, given in the table below:
\[
\begin{array}{c|cccccc}
 x & 1 & a & b & c  & d\\ \hline
\exists_1 x & 1 & a & b & c & d \\
\forall_1 x & 1 & a & b & c & d \\ \hline
\exists_2 x & 1 & a & c & c & d \\
\forall_2 x & 1 & a & c & c & d \\ \hline
\exists_3 x & 1 & d & b & c & d \\
\forall_3 x & 1 & d & b & c & d \\ \hline
\exists_4 x & 1 & d & c & c & d \\
\forall_4 x & 1 & d & c & c & d  
\end{array}
.   
\]
Then $\mathcal{MOP}(A)=\{(\exists_1,\forall_1),(\exists_2,\forall_2),(\exists_3,\forall_3),(\exists_4,\forall_4)\}$.
\end{example}

\begin{proposition} \label{mpsBE-20} Let $(A,\exists,\forall)$ be a monadic pseudo BE-algebra. Then the following hold, for all $x, y\in A:$ \\
$(1)$ $\exists 1=1;$ \\
$(2)$ $\forall 1=1;$ \\
$(3)$ $\forall\exists x=\exists x;$ \\
$(4)$ $\forall x=x$ iff $\exists x=x;$ \\
$(5)$ $\exists\exists x=\exists x;$ \\
$(6)$ $\forall\forall x=\forall x;$ \\
$(7)$ $\forall(\exists x\ra \exists y)=\exists x\ra \exists y$, 
      $\forall(\exists x\rs \exists y)=\exists x\rs \exists y;$ \\ 
$(8)$ $x\le \exists y$ iff $\exists x\le \exists y$ and $\forall x\le y$ iff $\forall x\le \forall y;$ \\
$(9)$ $\forall(\forall x\ra y)=\forall x\ra \forall y$, 
       $\forall(\forall x\rs y)=\forall x\rs \forall y;$ \\
$(10)$ $\forall(\forall x\ra \exists y)=\forall x\ra \exists y$, 
      $\forall(\forall x\rs \exists y)=\forall x\rs \exists y;$ \\
$(11)$ $\forall(x\ra \forall y)=\exists x\ra \forall y$, 
       $\forall(x\rs \forall y)=\exists x\rs \forall y;$ \\
$(12)$ $\forall(\forall x\ra \forall y)=\forall x\ra \forall y$, 
       $\forall(\forall x\rs \forall y)=\forall x\rs \forall y;$ \\
$(13)$ $\forall(\exists x\ra \exists y)=\exists x\ra \exists y$, 
       $\forall(\exists x\rs \exists y)=\exists x\rs \exists y;$ \\
$(14)$ $\exists(\exists x\ra \exists y)\le \exists x\ra \exists y$, 
       $\exists(\exists x\rs \exists y)\le \exists x\rs \exists y;$ \\
$(15)$ $\forall x=1$ iff $x=1$. 
\end{proposition}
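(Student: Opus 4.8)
The plan is to prove all fifteen identities purely equationally from $(M_1)$–$(M_5)$ together with the pseudo BE-identities, most importantly $(psBE_3)$ ($1\ra x = x$) and $(psBE_1)$ ($x\ra x = 1$). The point I want to stress at the outset is that the order $\le$ on a pseudo BE-algebra is in general neither antisymmetric nor transitive (the examples in the section witness the failure of $(psBCK_6)$), so I cannot establish the equalities in $(3)$, $(5)$, $(6)$ by an order-sandwich argument; every equality must be obtained by direct substitution. The recurring device will be to coerce the argument of $\forall$ into one of the two shapes $u\ra\exists v$ or $\exists u\ra v$, to which $(M_3)$ and $(M_4)$ apply, by rewriting $\exists v=1\ra\exists v$ via $(psBE_3)$ or $\forall v=\exists\forall v$ via $(M_5)$.

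First I would dispatch the three ``scalar'' facts. For $(1)$, put $x=1$ in $(M_1)$ to get $1\ra\exists 1=1$ and apply $(psBE_3)$. For $(2)$, apply $(M_3)$ with $y=x$ to the identity $x\ra\exists x=1$ of $(M_1)$: the right-hand side $\exists x\ra\exists x$ is $1$ by $(psBE_1)$, so $\forall 1=1$. I would prove $(15)$ here, out of listed order, since $(8)$ depends on it: the direction $x=1\Rightarrow\forall x=1$ is $(2)$, while $\forall x=1$ together with $(M_2)$ gives $1\ra x=1$, hence $x=1$ by $(psBE_3)$.

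Next come the idempotency facts, which form the backbone. For $(3)$ I rewrite $\exists x=1\ra\exists x$ and use $(M_3)$ with first argument $1$ to get $\forall\exists x=\exists 1\ra\exists x=1\ra\exists x=\exists x$, invoking $(1)$. Then $(4)$ is immediate: $\forall x=x$ forces $\exists x=\exists\forall x=\forall x=x$ by $(M_5)$, and $\exists x=x$ forces $\forall x=\forall\exists x=\exists x=x$ by $(3)$. With $(4)$ in hand, $(5)$ and $(6)$ are one-liners: $(3)$ says $\exists x$ is $\forall$-fixed, so by $(4)$ it is $\exists$-fixed, giving $(5)$; dually $(M_5)$ says $\forall x$ is $\exists$-fixed, so by $(4)$ it is $\forall$-fixed, giving $(6)$.

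The remaining identities are substitution exercises into $(M_3)$, $(M_4)$ and these idempotencies, so I would group them: $(7)$ and its verbatim duplicate $(13)$ follow from $(M_3)$ with first argument $\exists x$ and $(5)$; $(9)$ from $(M_4)$ with $\exists$-slot $\forall x$ and $(M_5)$; $(10)$ from $(M_3)$ with first argument $\forall x$ and $(M_5)$; $(11)$ from writing $\forall y=\exists\forall y$, applying $(M_3)$, then $(M_5)$; and $(12)$ from $(9)$ followed by $(6)$. For the two equivalences in $(8)$ I transport $\le$-relations across $\forall$: $x\le\exists y$ gives $\forall(x\ra\exists y)=\forall 1=1$, which equals $\exists x\ra\exists y$ by $(M_3)$, and conversely $\exists x\le\exists y$ forces $\forall(x\ra\exists y)=1$, whence $x\ra\exists y=1$ by $(15)$; the second equivalence runs identically with $(9)$ replacing $(M_3)$. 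Finally $(14)$ follows from $(7)$ and $(4)$, since $\exists x\ra\exists y$ being $\forall$-fixed makes it $\exists$-fixed. The only genuine obstacle is conceptual rather than computational: because $\le$ is not antisymmetric, I must resist bounding $\forall\exists x$ between $\exists x$ and itself and instead always reduce to the functional equations; once that discipline is imposed, every step is a short rewrite.
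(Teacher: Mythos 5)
Your proof is correct, and for fourteen of the fifteen items it follows the paper's own computations essentially verbatim: the same uses of $(psBE_3)$ to write $\exists x=1\ra\exists x$, the same derivation chain $(2)\Rightarrow(3)\Rightarrow(4)\Rightarrow(5),(6)$, and the same substitutions into $(M_3)$/$(M_4)$ for $(7)$--$(13)$ (your trivial variants --- deriving $(12)$ from $(9)$ and $(6)$ rather than from $(11)$, treating $(13)$ as a verbatim duplicate of $(7)$ rather than rederiving it from $(10)$, and routing the converse directions of $(8)$ through $(15)$ rather than inlining $(M_2)$ --- are immaterial repackagings of the same steps). The one genuine divergence is $(14)$: the paper proves the inequality by a computation in the second implication, showing $\exists(\exists x\ra\exists y)\rs(\exists x\ra\exists y)=\exists\forall(x\ra\exists y)\rs\forall(x\ra\exists y)=\forall(\forall(x\ra\exists y)\rs(x\ra\exists y))=\forall 1=1$ via $(M_3)$--$(M_5)$ and $(M_2)$, whereas you observe that $\exists x\ra\exists y$ is a fixed point of $\forall$ by $(7)$, hence of $\exists$ by $(4)$, giving the \emph{equality} $\exists(\exists x\ra\exists y)=\exists x\ra\exists y$, from which the stated inequality is just reflexivity $(psBE_1)$. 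Your route is shorter, strictly stronger, and --- notably --- valid in any monadic pseudo BE-algebra without antisymmetry of $\le$; the paper only upgrades $(14)$ to an equality later, in Proposition \ref{mpsBCK-15}$(1)$, and does so by combining $(14)$ with $(M_1)$ and antisymmetry, which restricts that upgrade to the pseudo BCK setting. Your argument shows this restriction is unnecessary for that identity, which is a small but real improvement; your opening caution about avoiding order-sandwich arguments (since $\le$ is neither antisymmetric nor transitive in general, as Example \ref{mpsBE-10-20} witnesses) is exactly the discipline the paper's proof also observes.
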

\begin{proof}
$(1)$ It follows from $(M_1)$. \\
$(2)$ By $(M_1)$ and $(M_3)$ we have $\forall 1=\forall(x\ra \exists x)=\exists x\ra \exists x=1$. \\
$(3)$ By $(M_3)$ we have $\forall\exists x=\forall(1\ra \exists x)=\exists 1\ra \exists x=1\ra \exists x=\exists x$. \\
$(4)$ From $\forall x=x$, by $(M_5)$ we get $x=\forall x=\exists\forall x=\exists x$. 
Conversely, if $\exists x=x$, then using $(3)$ we have $x=\exists x=\forall\exists x=\forall x$. \\
$(5)$ From $\forall\exists x=\exists x$, by $(4)$ we get $\exists\exists x=\exists x$. \\
$(6)$ By $\exists\forall x=\forall x$, using $(4)$ we get $\forall\forall x=\forall x$. \\ 
$(7)$ Using $(M_3)$ and $(5)$ we have $\forall(\exists x\ra \exists y)=\exists\exists x\ra \exists y=
\exists x\ra \exists y$. \\
$(8)$ From $x\le \exists y$, using $(M_3)$ it follows that $\exists x\ra \exists y=\forall (x\ra \exists y)=
\forall 1=1$, hence $\exists x\le \exists y$. 
Conversely, if $\exists x\le \exists y$, then by $(M_2)$ we get 
$1=\exists x\ra \exists y=\forall(x\ra \exists y)\le x\ra \exists y$, hence $x\ra \exists y=1$, 
that is $x\le \exists y$. \\
By $\forall x\le y$, using $(M_5)$ and $(M_4)$ we have 
$\forall x\ra \forall y=\exists\forall x\ra \forall y=\forall(\exists\forall x\ra y)=
\forall(\forall x\ra y)=\forall 1=1$, so $\forall x\le \forall y$. 
Conversely, if $\forall x\le \forall y$, using $(M_5)$ and $(M_2)$ we get 
$1=\forall x\ra \forall y=\exists\forall x\ra \forall y=\forall(\exists\forall x\ra y)=\forall(\forall x\ra y)\le 
\forall x\ra y$, so $\forall x \ra y=1$. Hence $\forall x\le y$. \\
$(9)$ Using $(M_5)$ and $(M_4)$ we get 
$\forall(\forall x\ra y)=\forall(\exists\forall x\ra y)=\exists\forall x\ra \forall y=\forall x\ra \forall y$. 
Similarly, $\forall(\forall x\rs y)=\forall x\rs \forall y$. \\
$(10)$ By $(M_3)$ and $(M_5)$ we have 
$\forall(\forall x\ra \exists y)=\exists\forall x\ra \exists y=\forall x\ra \exists y$. 
Similarly, $\forall(\forall x\rs \exists y)=\forall x\rs \exists y$. \\
$(11)$ By $(M_3)$ and $(M_5)$ it follows that 
$\forall(x\ra \forall y)=\forall(x\ra \exists\forall y)=\exists x\ra \exists\forall y=\exists x\ra \forall y$. 
Similarly, $\forall(x\rs \forall y)=\exists x\rs \forall y$. \\
$(12)$ It follows by $(11)$ replacing $x$ by $\forall x$ and using $(M_5)$: 
$\forall(\forall x\ra \forall y)=\exists\forall x\ra \forall y=\forall x\ra \forall y$ and 
$\forall(\forall x\rs \forall y)=\exists\forall x\rs \forall y=\forall x\rs \forall y$. \\
$(13)$ It follows by $(10)$ replacing $x$ by $\forall x$ and using $(3)$. \\
$(14)$ Using $(M_3)$, $(M_4)$, $(M_5)$ and $(M_2)$ we have: \\ 
$\hspace*{1.6cm}$ $\exists(\exists x\ra \exists y)\rs (\exists x\ra \exists y)=
                   \exists\forall(x\ra \exists y)\rs \forall(x\ra \exists y)$ \\
$\hspace*{6cm}$ $=\forall(\exists\forall(x\ra \exists y)\rs (x\ra \exists y))$ \\
$\hspace*{6cm}$ $=\forall(\forall(x\ra \exists y)\rs (x\ra \exists y))=\forall 1=1$, \\
hence $\exists(\exists x\ra \exists y)\le \exists x\ra \exists y$. 
Similarly, $\exists(\exists x\rs \exists y)\le \exists x\rs \exists y$. \\
$(15)$ If $x=1$, then by $(2)$, $\forall x=1$. Conversely, $\forall x=1$ implies $1=\forall x\le x$, so $x=1$.
\end{proof}

\begin{proposition} \label{mpsBE-25} Let $(A,\exists,\forall)$ be a monadic pseudo BE(T)-algebra. 
Then, the quantifiers $\forall$ and $\exists$ are isotone, that is for all $x, y\in A$ such that $x\le y$, 
we have $\forall x\le \forall y$ and $\exists x\le \exists y$. 
\end{proposition}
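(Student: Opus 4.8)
The plan is to reduce both isotonicity claims to Proposition~\ref{mpsBE-20}(8) by building the appropriate $\le$-chains and then invoking transitivity, which is precisely what the $(T)$ condition supplies. So I would assume $x\le y$ throughout and treat the two quantifiers separately but symmetrically.

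First, for the existential quantifier, I would append the inequality $y\le \exists y$ (this is axiom $(M_1)$, read as $y\ra \exists y=1$) to the hypothesis $x\le y$. Since $A$ is a pseudo BE(T)-algebra, the relation $\le$ is transitive, so from $x\le y$ and $y\le \exists y$ I obtain $x\le \exists y$. Now Proposition~\ref{mpsBE-20}(8) gives $x\le \exists y$ iff $\exists x\le \exists y$, and applying this equivalence yields $\exists x\le \exists y$.

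For the universal quantifier I would argue dually, prepending the inequality $\forall x\le x$ (this is axiom $(M_2)$) to the hypothesis $x\le y$. Transitivity then delivers $\forall x\le y$, and the second half of Proposition~\ref{mpsBE-20}(8), namely $\forall x\le y$ iff $\forall x\le \forall y$, gives $\forall x\le \forall y$, which completes the argument.

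The only delicate point — and the reason the $(T)$ hypothesis cannot be dropped — is that in an arbitrary pseudo BE-algebra the relation $\le$ need not be transitive, so the chains $x\le y\le \exists y$ and $\forall x\le x\le y$ could not be collapsed to a single inequality. Once transitivity is available there is no genuine computational obstacle: the content is entirely absorbed into Proposition~\ref{mpsBE-20}(8), and the proof reduces to assembling $(M_1)$, $(M_2)$, and transitivity in the right order.
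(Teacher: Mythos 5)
Your proposal is correct and takes essentially the same route as the paper: the paper also uses $(M_1)$, $(M_2)$ and the $(T)$ condition to obtain $x\le \exists y$ and $\forall x\le y$, and then carries out inline exactly the computations (via $(M_3)$--$(M_5)$) that constitute Proposition~\ref{mpsBE-20}$(8)$, which you instead cite directly. Since the paper's proof of Proposition~\ref{mpsBE-20}$(8)$ does not use transitivity, your appeal to it is legitimate, and factoring the argument through that equivalence is just a tidier packaging of the same proof.
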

\begin{proof}
By $(M_2)$ we get $\forall x\le x\le y$, so by transitivity, $\forall x\ra y=1$. 
Applying $(M_5)$ and $(M_4)$ we have $\forall x\ra \forall y=\exists\forall x\ra \forall y=
\forall(\exists\forall x\ra y)=\forall(\forall x\ra y)=\forall 1=1$, hence $\forall x\le \forall y$. 
Similarly, using $(M_1)$ and $(M_3)$ we have $x\le y\le \exists y$, and we get 
$\exists x\ra \exists y=\forall (x\ra \exists y)=\forall 1=1$, thus $\exists x\le \exists y$. 
\end{proof}

\begin{corollary} \label{mpsBE-25-10} The following hold: \\
$(1)$ if $(A,\exists,\forall)$ is a monadic pseudo BE(A)-algebra, then $\forall$ and $\exists$ are isotone; \\
$(2)$ if $(A,\exists,\forall)$ is a monadic pseudo BCK-algebra, then $\forall$ and $\exists$ are isotone; \\
$(3)$ if $(A,\exists,\forall)$ is a monadic commutative pseudo BE-algebra, then $\forall$ and $\exists$ are isotone.   
\end{corollary}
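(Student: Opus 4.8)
The plan is to observe that all three statements are immediate consequences of Proposition \ref{mpsBE-25}, once one knows that the structure in question is a pseudo BE(T)-algebra. Indeed, Proposition \ref{mpsBE-25} asserts precisely that in a monadic pseudo BE(T)-algebra both $\forall$ and $\exists$ are isotone, so the whole task reduces to checking, in each of the three cases, that the underlying pseudo BE-algebra satisfies condition $(T)$, i.e.\ that the relation ``$\le$'' is transitive. I would therefore reduce each item to ``this is a pseudo BE(T)-algebra'' and then invoke Proposition \ref{mpsBE-25}.

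For item $(1)$ this is already recorded in Remark \ref{psBE-100}: every pseudo BE(A)-algebra is a pseudo BE(T)-algebra, the argument being that if $x\le y$ and $y\le z$ then antitonicity in the first variable gives $y\ra z\le x\ra z$, whence $1=y\ra z\le x\ra z$, i.e.\ $x\ra z=1$ and $x\le z$. Thus a monadic pseudo BE(A)-algebra is in particular a monadic pseudo BE(T)-algebra, and Proposition \ref{mpsBE-25} applies directly. For item $(2)$ I would appeal to the last line of Definition \ref{psBE-90}, which states that any pseudo BCK-algebra is simultaneously a pseudo BE(A)-, pseudo BE(M)- and pseudo BE(T)-algebra; hence a monadic pseudo BCK-algebra is a monadic pseudo BE(T)-algebra and Proposition \ref{mpsBE-25} again applies.

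For item $(3)$ the route is to use the structural result from \cite{Ciu33}, quoted in the introduction, that the commutative pseudo BE-algebras are term equivalent to the commutative pseudo BCK-algebras; in particular a commutative pseudo BE-algebra is a pseudo BCK-algebra, hence a pseudo BE(T)-algebra by item $(2)$, and the conclusion follows once more from Proposition \ref{mpsBE-25}. This case is the only one that is not purely internal to the excerpt, since it leans on the external term-equivalence theorem rather than on a definition or remark already proved here; I expect this to be the main (and essentially the only) obstacle. If one wished to avoid citing \cite{Ciu33}, the alternative would be to verify transitivity of $\le$ directly from the commutativity axioms of a commutative pseudo BE-algebra, but the cleanest presentation is simply to record that commutative pseudo BE-algebras are pseudo BCK-algebras and let $(2)$ do the work.
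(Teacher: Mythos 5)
Your proposal is correct and matches the paper's intended argument exactly: the paper states the corollary without proof, precisely because each case reduces to the (T) property via Remark \ref{psBE-100}, the last line of Definition \ref{psBE-90}, and the equivalence of commutative pseudo BE-algebras with commutative pseudo BCK-algebras, after which Proposition \ref{mpsBE-25} applies. Your only slight misperception is that the term-equivalence result is not external to the paper --- it is quoted as Theorem \ref{mcomm-psBE-40} (and restated in Remark \ref{mcomm-psBE-40-10}), so item $(3)$ is covered internally as well.
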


\begin{remark} \label{mpsBE-20-10}
Let $(A,\exists,\forall)$ be a monadic pseudo BCK-algebra such that $\forall x=\exists x$, for all $x\in A$. 
From $\forall x\le x\le \exists x$, for all $x\in A$, it follows that $\forall=\exists=Id_A$. 
As we can see in Examples \ref{mpsBE-10-20} and \ref{mpsBE-10-30}, this property is no longer valid for the case of 
proper monadic pseudo BE-algebras.
\end{remark}

\begin{remark} \label{mpsBE-20-20} 
A closure operator on a poset $(A,\le)$ is a map $\gamma$ that is increasing, monotone and idempotent, i.e. 
$x\le \gamma(x)$, $x\le y$ implies $\gamma(x)\le \gamma(y)$ and $\gamma(\gamma(x))=\gamma(x)$, for all $x\in A$. 
Dually, an interior operator is a decreasing ($x\le \gamma(x)$), monotone and idempotent map on $(A,\le)$ 
(see \cite{Gal3}). 
Clearly, if $(A,\exists,\forall)$ is a monadic pseudo BE-algebra, then $\exists$ is a closure operator, and $\forall$ is an interior operator on $(A,\le)$. 
\end{remark}

\begin{remark} \label{mpsBE-40-10}
Given two posets $A$, $B$, we say that the isotone maps $f:A\longrightarrow B$ and $g:B\longrightarrow A$ form a 
\emph{residuated pair} or an \emph{adjunction} or a \emph{Galois connection} if $f(x)\le y$ iff $x\le g(y)$, 
for all $x\in A$, $y\in B$. For a map $f:A\longrightarrow B$, if there exists a map 
$g:B\longrightarrow A$ such that the pair $(f,g)$ is residuated, then $g$ is unique (see \cite{Gal3}). \\
If $(A,\exists,\forall)$ is a monadic pseudo BE(T)-algebra, then $(\exists,\forall)$ is a residuated pair over $A$. 
Indeed, if $\exists x\le y$, then $\forall\exists x\le \forall y$, that is $\exists x\le \forall y$. 
Since $x\le \exists x$ and $A$ has the (T) property, we get $x\le \forall y$. 
Conversely, suppose that $x\le \forall y$. It follows that $\exists x\le \exists\forall y=\forall y\le y$, 
hence $\exists x\le y$. 
Moreover, by Proposition \ref{mpsBE-25}, the maps $\exists$ and $\forall$ are isotone, so that 
$(\exists,\forall)$ is a residuated pair. 
\end{remark}

\begin{proposition} \label{mpsBE-50} Let $(A,\exists,\forall)$ be a monadic bounded pseudo BE-algebra. Then the following hold, for all $x\in A:$ \\
$(1)$ $\forall 0=0;$ \\
$(2)$ $\exists 0=0;$ \\
$(3)$ $(\exists x)^{-}=\forall x^{-}$, $(\exists x)^{\sim}=\forall x^{\sim};$ \\
$(4)$ $\forall(\exists x)^{-}=(\exists x)^{-}$, $\forall(\exists x)^{\sim}=(\exists x)^{\sim}$. \\
$(5)$ $\forall(x^{-\sim})=(\exists x^{-})^{\sim}$, $\forall(x^{\sim-})=(\exists x^{\sim})^{-};$ \\ 
$(6)$ $\forall((\forall x^{-})^{\sim})=(\forall x^{-})^{\sim}$, 
      $\forall((\forall x^{\sim})^{-})=(\forall x^{\sim})^{-};$ \\
$(7)$ $\forall(\forall x)^{-}=(\forall x)^{-}$, $\forall(\forall x)^{\sim}=(\forall x)^{\sim};$ \\
$(8)$ $\exists(\exists x)^{-}=(\exists x)^{-}$, $\exists(\exists x)^{\sim}=(\exists x)^{\sim};$ \\
$(9)$ $\exists x=0$ iff $x=0$. 
\end{proposition}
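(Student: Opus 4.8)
The plan is to prove the nine items in the listed order, since each later item feeds on the earlier ones, and to isolate two ``engine'' facts that drive everything: the vanishing of $0$ under both quantifiers (items $(1)$--$(2)$), and the commutation rule between negation and the quantifiers, namely item $(3)$, $\forall(x^{-})=(\exists x)^{-}$ and $\forall(x^{\sim})=(\exists x)^{\sim}$.

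First I would dispose of $(1)$: by $(M_2)$ we have $\forall 0\le 0$, while boundedness gives $0\le\forall 0$, and since $\le$ is the partial order on which, by Remark \ref{mpsBE-20-20}, $\forall$ is an interior operator, these combine to $\forall 0=0$. Then $(2)$ is immediate from $(1)$ and $(M_5)$: $\exists 0=\exists(\forall 0)=\forall 0=0$. The crux is $(3)$. Here I would specialize the first identity of $(M_3)$, $\forall(x\ra\exists y)=\exists x\ra\exists y$, to $y=0$ and invoke $(2)$ to replace $\exists 0$ by $0$; recalling $x^{-}=x\ra 0$, this reads exactly $\forall(x^{-})=(\exists x)^{-}$. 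The second identity of $(M_3)$ with $y=0$ gives the $\rs$-version $\forall(x^{\sim})=(\exists x)^{\sim}$.

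From $(3)$ the remaining items are bookkeeping with the idempotence and fixed-point facts of Proposition \ref{mpsBE-20}. For $(4)$, rewrite $(\exists x)^{-}=\forall(x^{-})$ via $(3)$ and apply $\forall\forall=\forall$ (Proposition \ref{mpsBE-20}(6)), then $(3)$ again. Item $(5)$ is just $(3)$ applied with $x$ replaced by $x^{-}$ (resp. $x^{\sim}$). For $(7)$ I would set $w=\forall x$, note $\exists w=w$ by $(M_5)$, and feed $w$ into $(3)$ to get $\forall(w^{-})=(\exists w)^{-}=w^{-}$; item $(6)$ is the same move with $w=\forall(x^{-})$ (resp. $\forall(x^{\sim})$), again using $\exists\forall=\forall$. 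Item $(8)$ follows because, by $(4)$, $(\exists x)^{-}$ is a $\forall$-fixed element, hence $\exists$-fixed by the equivalence $\forall z=z\Leftrightarrow\exists z=z$ (Proposition \ref{mpsBE-20}(4)); equivalently $(\exists x)^{-}=\forall(x^{-})$ lies in the image of $\forall$ and $\exists\forall=\forall$. Finally $(9)$ mirrors $(1)$: one direction is $(2)$, and for the other, $\exists x=0$ together with $x\le\exists x$ from $(M_1)$ yields $x\le 0$, whence $x=0$.

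The one point that needs care --- the main obstacle --- is the antisymmetry invoked in $(1)$ and $(9)$: a general pseudo BE-algebra order is only a preorder (Example \ref{mpsBE-10-20} exhibits $a\le b\le a$ with $a\ne b$), so these two equalities rely on reading $\le$ as the genuine partial order underlying the closure/interior-operator picture of Remark \ref{mpsBE-20-20}. Every other step is a purely equational manipulation of $(M_1)$--$(M_5)$ and Proposition \ref{mpsBE-20}, with item $(3)$ as the single nontrivial idea.
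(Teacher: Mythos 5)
Your proof is correct and follows essentially the same route as the paper's: items $(1)$--$(2)$ from $(M_2)$ and $(M_5)$, item $(3)$ by specializing $(M_3)$ at $y=0$ using $\exists 0=0$, and the remaining items by substitution into $(3)$ together with $(M_5)$; your variants for $(4)$ (via $\forall\forall=\forall$ instead of the paper's direct use of $(M_4)$ with $\forall 0=0$) and for $(8)$ (via the fixed-point equivalence of Proposition \ref{mpsBE-20}$(4)$ instead of the paper's $(M_3)$/$(M_5)$ chain) are equivalent one-line reroutings, not a different method. The antisymmetry subtlety you flag in $(1)$ and $(9)$ is genuine, but the paper's own proof glosses it in exactly the same way (``$x\le \exists x=0$, so $x=0$''), so your explicit acknowledgment only makes the shared implicit convention visible.
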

\begin{proof}
$(1)$ It follows from $(M_2)$ for $x:=0$. \\
$(2)$ It is a consequence of Proposition \ref{mpsBE-20}$(4)$. \\
$(3)$ Using $(M_3)$ we get  
$(\exists x)^{-}=\exists x\ra 0=\exists x\ra \exists 0=\forall (x\ra \exists 0)=\forall (x\ra 0)=\forall x^{-}$. 
Similarly, $(\exists x)^{\sim}=\forall x^{\sim}$. \\
$(4)$ By $(M_4)$ we have 
$\forall(\exists x)^{-}=\forall(\exists x\ra 0)=\exists x\ra \forall 0=\exists x\ra 0=(\exists x)^{-}$. 
Similarly, $\forall(\exists x)^{\sim}=(\exists x)^{\sim}$. \\
$(5)$ Applying $(M_3)$, it follows that 
$\forall(x^{-\sim})=\forall(x^{-}\rs 0)=\forall(x^{-}\rs \exists 0)=\exists x^{-}\rs \exists 0=
                    \exists x^{-}\rs 0=(\exists x^{-})^{\sim}$. 
Similarly, $\forall(x^{\sim-})=(\exists x^{\sim})^{-}$. \\
$(6)$ Using $(M_3)$ and $(M_5)$ we get 
$\forall((\forall x^{-})^{\sim})=\forall(\forall x^{-}\rs 0)=\forall(\forall x^{-}\rs \exists 0)=
          \exists\forall x^{-}\rs \exists 0=\forall x^{-}\rs \exists 0=\forall x^{-}\rs 0=
          (\forall x^{-})^{\sim}$. 
Similarly, $\forall((\forall x^{\sim})^{-})=(\forall x^{\sim})^{-}$. \\
$(7)$ Using $(M_3)$ and $(M_5)$ we get 
$\forall(\forall x)^{-}=\forall(\forall x\ra 0)=\forall(\forall x\ra \exists 0)= \exists\forall x\ra \exists 0=
\forall x\ra 0=(\forall x)^{-}$.    
Similarly, $\forall(\forall x)^{\sim}=(\forall x)^{\sim}$. \\
$(8)$ Applying $(M_3)$, $(M_5)$ and $(3)$ we have 
$\exists(\exists x)^{-}=\exists(\exists x\ra 0)=\exists(\exists x\ra \exists 0)=\exists\forall(x\ra \exists 0)=
\forall(x\ra \exists 0)=\forall x^{-}=(\exists x)^{-}$. 
Similarly, $\exists(\exists x)^{\sim}=(\exists x)^{\sim}$. \\
$(9)$ By $(2)$, $x=0$ implies $\exists x=0$. Conversely, $\exists x=0$ implies $x\le \exists x=0$, so $x=0$. 
\end{proof}

If $(A,\exists,\forall)$ is a monadic pseudo BE-algebra, denote  
$A_{\exists\forall}=\{x\in A \mid x=\exists x\}=\{x\in A \mid x=\forall x\}$. 
We say that $A_{\exists\forall}$ is the set of \emph{fixed elements} of $(A,\exists,\forall)$.  

\begin{proposition} \label{mpsBE-80} Let $(A,\exists,\forall)$ be a monadic pseudo BE-algebra. 
Then the following hold: \\
$(1)$ $A_{\exists\forall}=\forall A_{\exists\forall}=\exists A_{\exists\forall}=\forall A=\exists A;$ \\
$(2)$ $A_{\exists\forall}=A$ iff $\forall=\exists=Id_A;$ \\
$(3)$ $A_{\exists\forall}$ is a subalgebra of $A;$ \\
$(4)$ $\forall A$ and $\exists A$ are subalgebras of $A;$ \\
$(5)$ if $A$ has the (T) property and $\forall A$ is bounded, then $A$ is bounded. 
\end{proposition}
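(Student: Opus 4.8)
The plan is to establish the five claims in sequence, using the idempotency and fixed-point results from Proposition \ref{mpsBE-20}, and reserving the transitivity hypothesis for the final claim. Throughout I will exploit the two descriptions $A_{\exists\forall}=\{x\mid x=\exists x\}=\{x\mid x=\forall x\}$, which agree by Proposition \ref{mpsBE-20}$(4)$.

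First I would prove $(1)$. For the equality $\forall A=A_{\exists\forall}$, observe that any element of $\forall A$ has the form $\forall x$, and by Proposition \ref{mpsBE-20}$(6)$ we have $\forall\forall x=\forall x$, so $\forall x$ is a fixed element; conversely every $y\in A_{\exists\forall}$ equals $\forall y\in \forall A$. The equality $\exists A=A_{\exists\forall}$ is identical, using Proposition \ref{mpsBE-20}$(5)$. Finally, since every element of $A_{\exists\forall}$ is fixed by both $\forall$ and $\exists$, applying either operator to the set $A_{\exists\forall}$ returns it unchanged, giving $\forall A_{\exists\forall}=\exists A_{\exists\forall}=A_{\exists\forall}$. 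Claim $(2)$ is then immediate from the definition of $A_{\exists\forall}$: the set equals $A$ exactly when $\forall x=x$ (equivalently $\exists x=x$) for every $x$, which is precisely the statement $\forall=\exists=Id_A$.

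Next I would verify that $A_{\exists\forall}$ is a subalgebra, i.e. that it contains $1$ and is closed under $\ra$ and $\rs$. Containment of $1$ follows from Proposition \ref{mpsBE-20}$(1)$,$(2)$. For closure, take $x,y\in A_{\exists\forall}$, so $x=\exists x$ and $y=\exists y$; then $x\ra y=\exists x\ra \exists y$, and Proposition \ref{mpsBE-20}$(7)$ gives $\forall(\exists x\ra \exists y)=\exists x\ra \exists y$, i.e. $x\ra y$ is a fixed element. The argument for $\rs$ is the same with the second identity in $(7)$. This proves $(3)$, and $(4)$ follows at once, since $(1)$ identifies both $\forall A$ and $\exists A$ with the subalgebra $A_{\exists\forall}$.

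Finally, for $(5)$ let $0'$ denote the least element of the bounded subalgebra $\forall A$; note that the order on $\forall A$ is the restriction of that on $A$, since $\le$ is defined uniformly by $u\ra v=1$. I would show $0'$ is in fact a least element of all of $A$. Fix $x\in A$. Then $\forall x\in \forall A$, so minimality of $0'$ in $\forall A$ gives $0'\le \forall x$, while $(M_2)$ yields $\forall x\le x$. The transitivity hypothesis $(T)$ now chains these to $0'\le x$, so $0'$ is a least element of $A$ and $A$ is bounded. The main point of the whole proposition is exactly this last step: without $(T)$ one cannot pass from the bound on the image $\forall A$ to a bound on all of $A$, which is why the hypothesis is imposed; the remaining four parts are routine consequences of idempotency and the fixed-point identities.
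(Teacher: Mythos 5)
Your proof is correct and follows essentially the same route as the paper's: idempotency and the fixed-point identities of Proposition \ref{mpsBE-20} for $(1)$--$(4)$, and the chain $0\le \forall x\le x$ with $(T)$ for $(5)$. The only cosmetic differences are that you cite Proposition \ref{mpsBE-20}$(7)$ where the paper redoes the computation via $(M_3)$, and you use $\forall\forall x=\forall x$ where the paper uses $(M_5)$ in one inclusion of $(1)$ --- both are legitimate since the two descriptions of $A_{\exists\forall}$ coincide by Proposition \ref{mpsBE-20}$(4)$, as you note.
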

\begin{proof}
$(1)$ If $x\in A_{\exists\forall}$, then $x=\forall x$, so $x\in \forall A_{\exists\forall}$. 
Hence, $A_{\exists\forall}\subseteq \forall A_{\exists\forall}$. 
Conversely, let $x\in \forall A_{\exists\forall}$, that is $x=\forall y$, with $y\in A_{\exists\forall}$. 
Then, we have $x=\forall y=\forall\forall y=\forall x$, hence $x\in A_{\exists\forall}$. 
Thus, $\forall A_{\exists\forall}\subseteq A_{\exists\forall}$. 
It follows that $A_{\exists\forall}=\forall A_{\exists\forall}$, and similarly 
$A_{\exists\forall}=\exists A_{\exists\forall}$. 
If $x\in A_{\exists\forall}$, then $x=\forall x$, so $x\in \forall A$, hence $A_{\exists\forall}\subseteq \forall A$. 
Conversely, let $x\in \forall A$, that is $x=\forall y$, with $y\in A$. 
It follows that $x=\forall y=\exists\forall y=\exists x$, so $x\in A_{\exists\forall}$, thus 
$\forall A\subseteq A_{\exists\forall}\subseteq$, so $A_{\exists\forall}=\forall A$. 
Similarly, $A_{\exists\forall}=\exists A$. \\
$(2)$ We have $A_{\exists\forall}=A$ iff $\forall x=x$ and $\exists x=x$, for all $x\in A$. 
Hence $\forall x=\exists x=Id_A(x)$, for all $x\in A$, that is $\forall=\exists=Id_A$. \\
$(3)$ Since $1=\forall 1$, we have $1\in A_{\exists\forall}$. 
Let $x, y \in A_{\exists\forall}$, that is $x=\exists x$ and $y=\exists y$. 
Using $(M_3)$ we get $\forall(x\ra y)=\forall(\exists x\ra \exists y)=\exists\exists x\ra \exists y=
\exists x\ra \exists y=x\ra y$, hence $x\ra y\in A_{\exists\forall}$. 
Similarly, $x\rs y\in A_{\exists\forall}$, so $A_{\exists\forall}$ is a subalgebra of $A$. \\
$(4)$ It follows by $(1)$ and $(3);$ \\
$(5)$ Let $0$ be the smallest element of $\forall A$, so we have $0\le \forall x\le x$, for all $x\in A$. 
Since $A$ has the (T) property, it follows that $0\le x$, for all $x\in A$, that is $0$ is the smallest 
element of $A$.
\end{proof}

\begin{proposition} \label{mpsBE-90} Let $(A,\exists,\forall)$ be a monadic pseudo BE-algebra. 
Then the following hold: \\
$(1)$ $\Img(\forall)=A_{\exists\forall};$ \\
$(2)$ if $\forall$ is surjective, then $\forall=Id_A;$ \\
$(3)$ $\Ker(\forall)=\{1\}$, where $\Ker(\forall)=\{x\in A \mid \forall x=1\}$. 
\end{proposition}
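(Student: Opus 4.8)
The plan is to dispatch all three parts by citing the structural facts already assembled about the fixed-element set $A_{\exists\forall}$ and about the quantifier $\forall$; none of the three claims requires a fresh computation from the axioms $(M_1)$--$(M_5)$, so this will be a short proof built entirely out of references.

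For part $(1)$ I would begin from the observation that $\Img(\forall)$ is, by definition, nothing other than $\forall A$, the set of all values $\forall x$ as $x$ ranges over $A$. Proposition \ref{mpsBE-80}$(1)$ has already established the chain $A_{\exists\forall}=\forall A_{\exists\forall}=\exists A_{\exists\forall}=\forall A=\exists A$, and in particular the equality $A_{\exists\forall}=\forall A$. Reading that equality as $\Img(\forall)=A_{\exists\forall}$ finishes the part outright, with nothing further to verify.

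For part $(2)$ the strategy is to route surjectivity through part $(1)$. If $\forall$ is surjective then $\Img(\forall)=A$, and combining this with part $(1)$ yields $A_{\exists\forall}=A$. Proposition \ref{mpsBE-80}$(2)$ states that $A_{\exists\forall}=A$ holds if and only if $\forall=\exists=Id_A$, so I may read off $\forall=Id_A$ (and incidentally $\exists=Id_A$) as desired. For part $(3)$ I would invoke Proposition \ref{mpsBE-20}$(15)$, which asserts $\forall x=1$ precisely when $x=1$; since $\Ker(\forall)=\{x\in A \mid \forall x=1\}$ by definition, this biconditional delivers $\Ker(\forall)=\{1\}$ immediately.

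The only point worth flagging is notational rather than mathematical: once one identifies $\Img(\forall)$ with $\forall A$, parts $(1)$ and $(2)$ collapse onto Proposition \ref{mpsBE-80} and part $(3)$ onto Proposition \ref{mpsBE-20}$(15)$. Accordingly I expect no genuine obstacle here; the main task is the bookkeeping of matching each item to its correct antecedent result, and there are no calculations to grind through.
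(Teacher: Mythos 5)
Your proof is correct and takes essentially the same route as the paper: part $(3)$ is the paper's proof verbatim (Proposition \ref{mpsBE-20}$(15)$), and your citations of Proposition \ref{mpsBE-80}$(1)$ and $(2)$ for parts $(1)$ and $(2)$ merely relocate the same idempotence computations ($\forall\forall x=\forall x$ and $\exists\forall x=\forall x$) that the paper carries out inline. There is no gap: the identification $\Img(\forall)=\forall A$ is just the definition, and since Proposition \ref{mpsBE-80} precedes this result in the paper, your citation-based argument involves no circularity.
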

\begin{proof}
$(1)$ Obviously, $A_{\exists\forall}\subseteq \Img(\forall)$.  
If $y\in \Img(\forall)$, then there exists $x\in A$ such that $y=\forall x$. It follows that 
$\forall y=\forall\forall x=\forall x=y$, so $\Img(\forall)\subseteq A_{\exists\forall}$. 
Hence $\Img(\forall)=A_{\exists\forall}$. \\
$(2)$ Let $x\in A$. Since $A=\Img(\forall)$, then there exists $x^{\prime}\in A$ such that $x=\forall x^{\prime}$. 
It follows that $\forall x=\forall\forall x^{\prime}=\forall x^{\prime}=x$. Hence $\forall = Id_A$. \\
$(3)$ It is a consequence of Proposition \ref{mpsBE-20}$(15)$. 
\end{proof}

\begin{example} \label{mpsBE-100}
Consider the pseudo BE-algebra and its monadic operators from Example \ref{mpsBE-10-30}. 
We have $A_{\exists_1\forall_1}=A=\Img(\forall_1)$, $A_{\exists_2\forall_2}=\{a,c,d,1\}=\Img(\forall_2)$, $A_{\exists_3\forall_3}=\{b,c,d,1\}=\Img(\forall_3)$, $A_{\exists_4\forall_4}=\{c,d,1\}=\Img(\forall_4)$.
\end{example}


$\vspace*{5mm}$

\section{Monadic bounded commutative pseudo BE-algebras}

In this section we define the monadic bounded commutative pseudo BE-algebras and we investigate their properties. 
We give a method to construct a monadic operator $(\exists,\forall)$ on these structures. 
It is proved that, in monadic bounded commutative pseudo BE-algebras, there is a one-to-one correspondence 
between universal and existential quantifiers. 

\begin{definition} \label{mcomm-psBE-10} $\rm($\cite{Ciu33}$\rm)$ A pseudo BE-algebra $(A,\ra,\rs,1)$ is said to be \emph{commutative} if it satisfies the following conditions, for all $x, y\in A$: \\ 
$\hspace*{3cm}$ $(x\ra y)\rs y=(y\ra x)\rs x,$ \\
$\hspace*{3cm}$ $(x\rs y)\ra y=(y\rs x)\ra x$. 
\end{definition} 

If we denote $x\Cup_1 y=(x\ra y)\rs y$ and $x\Cup_2 y=(x\rs y)\ra y$, then $A$ is commutative if 
$x\Cup_1 y=y\Cup_1 x$ and $x\Cup_2 y=y\Cup_2 x$, for all $x, y\in A$. 

\begin{example} \label{comm-psBE-20} The pseudo BE-algebra $(A,\ra,\rs,1)$ from Example \ref{mpsBE-10-30} 
is not commutative, since $(a\rs c)\ra c=b\neq c=(c\rs a)\ra a$.
\end{example}

\begin{example} \label{mcomm-psBE-30} $\rm($\cite{Ciu33}$\rm)$
Let $(G, \vee,\wedge, \cdot, ^{-1}, e)$ be an $\ell$-group. On the negative cone $G^{-}=\{g\in G \mid g\le e\}$ we define the operations $x\rightarrow y=y\cdot (x\vee y)^{-1}$, $x\rightsquigarrow y=(x\vee y)^{-1}\cdot y$. 
Then $(G^{-}, \rightarrow, \rightsquigarrow, e)$ is a commutative pseudo BE-algebra. 
\end{example}

\begin{theorem} \label{mcomm-psBE-40} $\rm($\cite{Ciu33}$\rm)$ The class of commutative pseudo BE-algebras is 
equivalent to the class of commutative pseudo BCK-algebras. 
\end{theorem}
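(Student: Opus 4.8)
The statement asserts a term equivalence: both classes live in the same signature $(\ra,\rs,1)$, and the two defining commutativity equations of Definition \ref{mcomm-psBE-10} are literally the same for either notion. So the plan is to show that, on an algebra $(A,\ra,\rs,1)$ satisfying those commutativity equations, the axioms $(psBE_1)$--$(psBE_5)$ and the axioms $(psBCK_1)$--$(psBCK_6)$ are interderivable. One inclusion is immediate: it is already recorded (from \cite{Ciu33}) that every pseudo BCK-algebra is a pseudo BE-algebra, and since the commutativity equations are unchanged, every commutative pseudo BCK-algebra is a commutative pseudo BE-algebra. All the work is in the converse.

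So let $A$ be a commutative pseudo BE-algebra; I must verify the six pseudo BCK-axioms. Several come for free from the BE-axioms: $(psBCK_3)$ and $(psBCK_4)$ are exactly $(psBE_3)$, and $(psBCK_5)$ is the first half of $(psBE_2)$. Antisymmetry $(psBCK_6)$ is where commutativity first does real work: if $x\ra y=1$ and $y\ra x=1$, then the first commutativity equation together with $(psBE_3)$ gives
\[
y=1\rs y=(x\ra y)\rs y=(y\ra x)\rs x=1\rs x=x,
\]
so $x=y$ (only one of the two commutativity equations is needed here).

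The substance is $(psBCK_1)$ and $(psBCK_2)$, which encode transitivity and monotonicity of $\le$ — precisely the features a bare pseudo BE-algebra may lack. My approach is to show that the symmetric operation $x\vee y:=(x\ra y)\rs y$ is a join. First, $x\vee y$ is an upper bound of $x$ and $y$: using $(psBE_4)$ one gets $x\ra((x\ra y)\rs y)=(x\ra y)\rs(x\ra y)=1$ by $(psBE_1)$ (this is Proposition \ref{psBE-40}$(4)$), and $y\ra((x\ra y)\rs y)=(x\ra y)\rs(y\ra y)=(x\ra y)\rs 1=1$ by $(psBE_1)$ and $(psBE_2)$. Next, the forward direction $x\le y\Rightarrow x\vee y=1\rs y=y$ is trivial. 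The decisive step is to prove that $\vee$ is \emph{associative}, so that (being commutative and idempotent) it is a semilattice operation. Granting associativity, both the remaining BCK-features follow cheaply: for transitivity, if $x\le y$ and $y\le z$ then $x\vee z=x\vee(y\vee z)=(x\vee y)\vee z=y\vee z=z$, and combining $x\vee z=z$ with the upper-bound fact $x\le x\vee z$ yields $x\le z$; similarly, if $x\le z$ and $y\le z$ then $(x\vee y)\vee z=x\vee(y\vee z)=x\vee z=z$, so $x\vee y\le(x\vee y)\vee z=z$, giving the least-upper-bound property. With $\le$ now a partial order and $\vee$ its join, the monotonicity properties (the analogues of Proposition \ref{psBE-20}$(2)$,$(3)$) follow, and $(psBCK_1)$, $(psBCK_2)$ are then obtained by the standard exchange manipulations from $(psBE_4)$ and Proposition \ref{psBE-40}$(1)$, with the $\rs$-version of the commutativity equation handling the dual operation $(x\rs y)\ra y$.

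The main obstacle is exactly the associativity of $\vee$, i.e. that $(x\ra y)\rs y$ is associative, established \emph{without} assuming any of conditions (A), (M), or (T) — these must instead be \emph{consequences} of commutativity, not hypotheses. I expect to reach it through a chain of auxiliary identities valid in every pseudo BE-algebra, chiefly the exchange laws $(psBE_4)$ and Proposition \ref{psBE-40}$(1)$ together with the upper-bound computations above, fed into the two commutativity equations; the delicate part is that commutativity does not a priori yield monotonicity of the implications, so the associativity argument must route entirely through the BE-identities and the symmetry of $\vee$. Everything downstream of the semilattice structure is routine verification.
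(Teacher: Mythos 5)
The paper does not actually prove this theorem --- it is imported from \cite{Ciu33} --- so your attempt must be measured against the argument of that source. Your scaffolding is correct: one inclusion is immediate, $(psBCK_3)$--$(psBCK_5)$ sit among the BE axioms, and your antisymmetry computation for $(psBCK_6)$ is exactly the standard one. But the submission is not a proof of the remaining axioms $(psBCK_1)$, $(psBCK_2)$: everything is made to hang on the associativity of $x\vee y=(x\ra y)\rs y$, which you explicitly leave unestablished (``I expect to reach it through a chain of auxiliary identities''). That is the crux of the whole theorem, not a routine verification, and your plan inverts the workable order of deduction: associativity of $\vee$ is normally obtained \emph{after} one knows the algebra is a (commutative) pseudo BCK-algebra, because a direct equational derivation from $(psBE_1)$--$(psBE_5)$ plus commutativity needs essentially the transitivity and monotonicity facts you intend to extract from it. There is a second gap even granting associativity: from ``$\le$ is a partial order and $\vee$ is its join'' you assert that the monotonicity/antitonicity of $\ra,\rs$ ``follow.'' They do not; conditions (A) and (M) describe how the implications interact with $\le$ and are not consequences of the bare semilattice structure --- they must themselves be derived from commutativity, which your text never does.

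The direct route bypasses $\vee$ entirely and is short. If $y\le z$, then by $(psBE_3)$ and commutativity $z=1\rs z=(y\ra z)\rs z=(z\ra y)\rs y$; hence if also $x\le y$, then by $(psBE_4)$,
\[
x\ra z=x\ra\bigl((z\ra y)\rs y\bigr)=(z\ra y)\rs(x\ra y)=(z\ra y)\rs 1=1,
\]
which is transitivity in two lines --- no join needed. Next, $x\le y$ gives $y=(y\ra x)\rs x$, so $z\ra y=z\ra\bigl((y\ra x)\rs x\bigr)=(y\ra x)\rs(z\ra x)\ge z\ra x$ by $(psBE_4)$ and Proposition \ref{psBE-40}$(2)$; this is condition (M) for $\ra$, and the $\rs$-version follows dually from the second commutativity identity and Proposition \ref{psBE-40}$(1)$. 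Then $(psBCK_1)$ drops out: $(y\ra z)\rs(x\ra z)=x\ra\bigl((y\ra z)\rs z\bigr)$ by $(psBE_4)$, and since $y\le(y\ra z)\rs z$ by Proposition \ref{psBE-40}$(4)$, (M) gives $x\ra y\le x\ra\bigl((y\ra z)\rs z\bigr)=(y\ra z)\rs(x\ra z)$, which together with $(psBE_5)$ is exactly $(psBCK_1)$; $(psBCK_2)$ is dual. (Even shorter: commutativity yields the identity $(y\ra z)\rs(x\ra z)=x\ra\bigl((z\ra y)\rs y\bigr)=(z\ra y)\rs(x\ra y)$, reducing $(psBCK_1)$ to Proposition \ref{psBE-40}$(3)$.) Once the pseudo BCK structure is in place, the associativity of $\vee$ and the least-upper-bound property you wanted come for free. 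To repair your proof, either carry out a derivation of this kind, or supply an actual equational proof of associativity \emph{together with} a separate derivation of (A)/(M) from commutativity; as written, both of those are promissory notes at precisely the points where the theorem's content lies.
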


As a consequence of this theorem, all results holding for commutative pseudo BCK-algebras also hold for commutative pseudo BE-algebras. 
According to \cite[Corollary 4.1.6]{Kuhr6}, there are no proper finite commutative pseudo BCK-algebras. 
It folows that there are no proper finite commutative pseudo BE-algebras, in the other words, 
every finite commutative pseudo BE-algebra is a commutative BE-algebra. 

\begin{remark} \label{mcomm-psBE-40-10}
Obviously, by $(psBE_1)$ the relation ``$\le$" is reflexive. If $A$ is commutative, then from $x\le y$ and $y\le x$ 
we have $x\ra y=1$ and $y\ra x=1$, so we get $x=1\rs x=(y\ra x)\rs x=(x\ra y)\rs y=1\rs y=y$, hence ``$\le$" is antisymmetric. 
Moreover, if $A$ is distributive, then from 
$x\le y$ and $y\le z$ we have $x\ra z=1\rs (x\ra z)=(x\ra y)\rs (x\ra z)=x\ra (y\rs z)=x\rs 1=1$, so $x\ra z=1$, 
hence the relation ``$\le$" is transitive. \\ 
On the other hand, any commutative pseudo BE-algbra is a pseudo BCK-algebra, so that the relation ``$\le$" 
is a partial order on $A$.
\end{remark}

\begin{definition} \label{minv-psBE-10} Let $(A, \ra, \rs, 0, 1)$ be a bounded pseudo BE-algebra. Then: \\
$(1)$ $A$ is called \emph{good} if $x^{-\sim}=x^{\sim-}$, for all $x\in A$. \\ 
$(2)$ $A$ is called \emph{involutive} if $x^{-\sim}=x^{\sim-}=x$, for all $x\in A$.  
\end{definition}

We recall some results from \cite{Ior1}: \\
$(1)$ Any bounded commutative pseudo BE-algebra is involutive, so by Theorem \ref{mcomm-psBE-40} is an involutive 
pseudo BCK-algebra, and it satisfies the (pP) condition. \\ 
$(2)$ If $A$ is a bounded commutative pseudo BE-algebra, then $(A,\le)$ is a lattice with $\vee$ and $\wedge$ 
defined by $x\vee y=(x\ra y)\rs y=(x\rs y)\ra y$, $x\wedge y=(x^{-}\vee y^{-})^{\sim}=(x^{\sim}\vee y^{\sim})^{-}$, 
for all $x, y\in A$. \\
$(3)$ The bounded commutative pseudo BE-algebras coincide (are categorically isomorphic) with pseudo MV-algebras. 



In a bounded commutative pseudo BE-algebra we define $x\oplus y=y^{\sim}\ra x=x^{-}\rs y$, for all $x, y\in A$. 

\begin{lemma} \label{minv-psBE-30-10} $\rm($\cite{Ior16}$\rm)$
In any bounded commutative pseudo BE-algebra the following hold for all $x, y\in A:$ \\
$(1)$ $x\oplus y=(y^{-}\odot x^{-})^{\sim}=(y^{\sim}\odot x^{\sim})^{-};$ \\
$(2)$ $x\odot y=(y^{-}\oplus x^{-})^{\sim}=(y^{\sim}\oplus x^{\sim})^{-}$.
\end{lemma}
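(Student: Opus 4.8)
The plan is to reduce both identities to the single exchange law $x\ra(y\ra z)=(x\odot y)\ra z$ (and its dual) together with involutivity, so that no genuine computation beyond rewriting is needed. First I would record the facts available: by the results recalled from \cite{Ior1}, a bounded commutative pseudo BE-algebra is an involutive pseudo BCK(pP)-algebra, hence $x^{-\sim}=x^{\sim-}=x$ for all $x$ (so in particular $(x^{-})^{\sim}=(x^{\sim})^{-}=x$), the product $\odot$ exists, and Proposition~\ref{psBE-25-10}$(5)$ applies, giving $x\ra(y\ra z)=(x\odot y)\ra z$ and $x\rs(y\rs z)=(y\odot x)\rs z$. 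I would also keep in hand the definitions $x^{-}=x\ra 0$, $x^{\sim}=x\rs 0$ and $x\oplus y=y^{\sim}\ra x=x^{-}\rs y$.

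I would prove $(2)$ first. For the equality $x\odot y=(y^{-}\oplus x^{-})^{\sim}$, I would substitute into the $\ra$-form of $\oplus$ and use involutivity to get $y^{-}\oplus x^{-}=(x^{-})^{\sim}\ra y^{-}=x\ra y^{-}$; since $y^{-}=y\ra 0$, the exchange identity yields $x\ra(y\ra 0)=(x\odot y)\ra 0=(x\odot y)^{-}$, whence $(y^{-}\oplus x^{-})^{\sim}=(x\odot y)^{-\sim}=x\odot y$. For the equality $x\odot y=(y^{\sim}\oplus x^{\sim})^{-}$ I would instead use the $\rs$-form $x\oplus y=x^{-}\rs y$, obtaining $y^{\sim}\oplus x^{\sim}=(y^{\sim})^{-}\rs x^{\sim}=y\rs x^{\sim}=y\rs(x\rs 0)=(x\odot y)\rs 0=(x\odot y)^{\sim}$, where the crucial point is matching the reversed variable order in the dual exchange identity $y\rs(x\rs z)=(x\odot y)\rs z$; a final application of involutivity gives $(y^{\sim}\oplus x^{\sim})^{-}=x\odot y$.

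Part $(1)$ I would then deduce from $(2)$ rather than repeat the work. Applying the second equality of $(2)$ to the pair $(y^{-},x^{-})$ gives $y^{-}\odot x^{-}=((x^{-})^{\sim}\oplus(y^{-})^{\sim})^{-}=(x\oplus y)^{-}$, so that $(y^{-}\odot x^{-})^{\sim}=x\oplus y$ by involutivity; applying the first equality of $(2)$ to $(y^{\sim},x^{\sim})$ gives $y^{\sim}\odot x^{\sim}=((x^{\sim})^{-}\oplus(y^{\sim})^{-})^{\sim}=(x\oplus y)^{\sim}$, hence $(y^{\sim}\odot x^{\sim})^{-}=x\oplus y$. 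The only delicate point—and the main obstacle—is bookkeeping the left/right asymmetry: one must pair the $\ra$-form of $\oplus$ with the $\ra$-exchange law for the $(\cdot)^{\sim}$-equalities and the $\rs$-forms for the $(\cdot)^{-}$-equalities, and correctly track the order reversal in $x\rs(y\rs z)=(y\odot x)\rs z$. Once this noncommutative bookkeeping is carried out consistently, both identities fall out purely from involutivity and the exchange laws, with no case analysis required.
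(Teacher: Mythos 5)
Your proof is correct, and there is in fact no in-paper proof to compare it against: Lemma~\ref{minv-psBE-30-10} is stated as a recalled result, cited from \cite{Ior16} without proof, so your argument supplies a derivation the paper delegates to the literature. The computation is sound: both halves of $(2)$ follow from the two defining forms $x\oplus y=y^{\sim}\ra x=x^{-}\rs y$, involutivity, and the exchange laws of Proposition~\ref{psBE-25-10}$(5)$ --- whose applicability you correctly license by the recalled fact that a bounded commutative pseudo BE-algebra is an involutive pseudo BCK(pP)-algebra --- and you track the order reversal in $x\rs(y\rs z)=(y\odot x)\rs z$ correctly, which is indeed the one place such a proof typically goes wrong. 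Deducing $(1)$ from $(2)$ by substituting the pairs $(y^{-},x^{-})$ and $(y^{\sim},x^{\sim})$ and cancelling with $x^{-\sim}=x^{\sim-}=x$ is likewise valid. The only ingredient you take on faith is the agreement of the two forms of $\oplus$, between which your proof switches; this is built into the paper's definition, but for full self-containment it follows from Proposition~\ref{psBE-25}$(6)$ together with involutivity, since $x^{-}\rs y=x^{-}\rs y^{\sim-}=y^{\sim}\ra x^{-\sim}=y^{\sim}\ra x$.
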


\begin{definition} \label{minv-psBE-40} Let $(A,\ra,\rs,0,1,\exists,\forall)$ be an algebra of type $(2,2,0,0,1,1)$ such that 
$(A,\ra,\rs,0,1)$ is a bounded commutative pseudo BE-algebra. Then $(A,\exists,\forall)$ is called a 
\emph{monadic bounded commutative pseudo BE-algebra} if it satisfes conditions $(M_1)$-$(M_5)$ and the 
following conditions, for all $x\in A:$ \\
$(M_6)$ $\forall(x\odot x)=\forall x\odot \forall x;$ \\
$(M_7)$ $\forall(x\oplus x)=\forall x\oplus \forall x$.   
\end{definition}

\begin{example} \label{minv-psBE-40-10} 
Consider the set $A=\{1,a,b,0\}$ and the operation $\ra$ given by the following table:
\[
\hspace{10mm}
\begin{array}{c|ccccc}
\ra & 1 & a & b & 0 \\ \hline
1 & 1 & a & b & 0 \\ 
a & 1 & 1 & b &b \\ 
b & 1 & a & 1 & a \\ 
0 & 1 & 1 & 1 & 1
\end{array}
. 
\]
The structure $(A,\ra,0,1)$ is a bounded commutative BE-algebra, so it is a bounded commutative BCK-algebra. 
The operations $\odot$ and $\oplus$ are given in the following tables:
\[
\hspace{10mm}
\begin{array}{c|ccccc}
\odot & 1 & a & b & 0 \\ \hline
1 & 1 & a & b & 0 \\ 
a & a & a & 0 & 0 \\ 
b & b & 0 & b & 0 \\ 
0 & 0 & 0 & 0 & 0
\end{array}
\hspace{10mm} 
\begin{array}{c|ccccc}
\oplus & 1 & a & b & 0 \\ \hline
1 & 1 & 1 & 1 & 1 \\ 
a & 1 & a & 1 & a \\ 
b & 1 & 1 & b & b \\ 
0 & 1 & a & b & 0
\end{array}
. 
\]
Consider the maps $\exists_i, \forall_i:A\longrightarrow A$, $i=1,2$, given in the table below:
\[
\begin{array}{c|cccccc}
 x & 1 & a & b & 0  \\ \hline
\exists_1 x & 1 & a & b & 0 \\
\forall_1 x & 1 & a & b & 0 \\ \hline
\exists_2 x & 1 & 1 & 1 & 0 \\
\forall_2 x & 1 & 0 & 0 & 0 \\ 
\end{array}
.   
\]
Then $\mathcal{MOP}(A)=\{(\exists_1,\forall_1),(\exists_2,\forall_2)\}$.
\end{example}

For the rest of this section $(A,\ra,\rs,0,1,\exists,\forall)$ will be a monadic bounded commutative pseudo BE-algebra, 
unless otherwise stated. 

\begin{proposition} \label{minv-psBE-50} The following hold, for all $x\in A:$ \\
$(1)$ $\exists x=(\forall x^{-})^{\sim}=(\forall x^{\sim})^{-}$, 
      $\forall x=(\exists x^{-})^{\sim}=(\exists x^{\sim})^{-};$ \\
$(2)$ $\exists((\exists x^{-})^{\sim})=(\exists x^{-})^{\sim}$,  
      $\exists((\exists x^{\sim})^{-})=(\exists x^{\sim})^{-};$ \\
$(3)$ $\exists x^{-}=(\forall x)^{-}$, $\exists x^{\sim}=(\forall x)^{\sim}$. 
\end{proposition}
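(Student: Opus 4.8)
The plan is to exploit the single structural fact that distinguishes this setting from the general monadic bounded pseudo BE-algebra of Proposition \ref{mpsBE-50}: the text recalls from \cite{Ior1} that every bounded commutative pseudo BE-algebra is \emph{involutive}, so $y^{-\sim}=y^{\sim-}=y$ holds for every $y\in A$. This cancellation law is the only new ingredient; everything else is a rearrangement of the negation--quantifier identities already proved in Proposition \ref{mpsBE-50}, parts $(3)$ and $(5)$.

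First I would dispatch $(1)$. The two formulas for $\exists x$ come from Proposition \ref{mpsBE-50}$(3)$, namely $(\exists x)^{-}=\forall x^{-}$: applying $^{\sim}$ to both sides and using involutivity to read $((\exists x)^{-})^{\sim}=\exists x$ yields $\exists x=(\forall x^{-})^{\sim}$, and the companion $\exists x=(\forall x^{\sim})^{-}$ follows symmetrically from $(\exists x)^{\sim}=\forall x^{\sim}$. The two formulas for $\forall x$ come from Proposition \ref{mpsBE-50}$(5)$, where $\forall(x^{-\sim})=(\exists x^{-})^{\sim}$; since $x^{-\sim}=x$ the left-hand side collapses to $\forall x$, giving $\forall x=(\exists x^{-})^{\sim}$, and the second half of \ref{mpsBE-50}$(5)$ gives $\forall x=(\exists x^{\sim})^{-}$ in the same way.

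With $(1)$ in hand, parts $(2)$ and $(3)$ are immediate. For $(3)$ I would simply read $(1)$ backwards: from $\forall x=(\exists x^{-})^{\sim}$ apply $^{-}$ and cancel the resulting double negation by involutivity to get $(\forall x)^{-}=\exists x^{-}$, and dually $(\forall x)^{\sim}=\exists x^{\sim}$ from $\forall x=(\exists x^{\sim})^{-}$. For $(2)$, the key observation is that $(1)$ identifies $(\exists x^{-})^{\sim}$ and $(\exists x^{\sim})^{-}$ both with $\forall x$, so the assertion reduces to $\exists(\forall x)=\forall x$, which is exactly axiom $(M_5)$; thus $\exists((\exists x^{-})^{\sim})=\exists(\forall x)=\forall x=(\exists x^{-})^{\sim}$, and likewise for the $^{\sim-}$ version.

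I do not expect a genuine obstacle here: the entire proof is a chain of applications of the two negations followed by the involutive cancellation. The only point demanding care is the bookkeeping of stars and dashes --- matching each identity of Proposition \ref{mpsBE-50} with the correct negation so that involutivity cancels precisely the intended pair, and invoking $(M_5)$ at the one place where a quantifier must be shown to fix its own argument.
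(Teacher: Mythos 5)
Your proof is correct and follows essentially the same route as the paper: both rest on Proposition \ref{mpsBE-50} together with the involutivity $x^{-\sim}=x^{\sim-}=x$ of bounded commutative pseudo BE-algebras, and your treatment of $(2)$ via $(M_5)$ is identical to the paper's. The only (inessential) divergence is in the $\forall$ half of $(1)$: you obtain $\forall x=(\exists x^{-})^{\sim}=(\exists x^{\sim})^{-}$ directly from Proposition \ref{mpsBE-50}$(5)$ by collapsing $x^{-\sim}$ to $x$, whereas the paper gets the same identities by substituting $x^{\sim}$ (resp.\ $x^{-}$) into the already-established formulas $\exists x=(\forall x^{-})^{\sim}=(\forall x^{\sim})^{-}$ --- a one-line variant either way.
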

\begin{proof}
$(1)$ From Proposition \ref{mpsBE-50}$(3)$ we have $(\exists x)^{-}=\forall x^{-}$ and 
$(\exists x)^{\sim}=\forall x^{\sim}$, so $\exists x=(\forall x^{-})^{\sim}=(\forall x^{\sim})^{-}$. 
Replacing $x$ by $x^{\sim}$ in $\exists x=(\forall x^{-})^{\sim}$ we get $\exists x^{\sim}=(\forall x)^{\sim}$.  
Similarly, replacing $x$ by $x^{-}$ in $\exists x=(\forall x^{\sim})^{-}$ we get $\exists x^{-}=(\forall x)^{-}$.  
Hence, $\forall x=(\exists x^{-})^{\sim}=(\exists x^{\sim})^{-}$. \\
$(2)$ Using $(1)$ and $(M_5)$ we have 
$\exists((\exists x^{-})^{\sim})=\exists\forall x=\forall x=(\exists x^{-})^{\sim}$ and 
$\exists((\exists x^{\sim})^{-})=\exists\forall x=\forall x=(\exists x^{\sim})^{-}$. \\
$(3)$ Replacing $x$ by $x^{-}$ in the identity $\exists x=(\forall x^{\sim})^{-}$ from $(1)$, we get 
$\exists x^{-}=(\forall x)^{-}$. 
Similarly, replacing $x$ by $x^{\sim}$ in the identity $\exists x=(\forall x^{-})^{\sim}$ we have 
$\exists x^{\sim}=(\forall x)^{\sim}$.
\end{proof}


\begin{proposition} \label{minv-psBE-70} The following hold, for all $x, y\in A:$ \\ 
$(1)$ $\forall (x\wedge y)=\forall x\wedge \forall y$ iff $\exists (x\vee y)=\exists x\vee \exists y;$ \\
$(2)$ $\forall (x\odot y)=\forall x\odot \forall y$ iff $\exists (x\oplus y)=\exists x\oplus \exists y;$ \\
$(3)$ $\forall (x\oplus y)=\forall x\oplus \forall y$ iff $\exists (x\odot y)=\exists x\odot \exists y$. 
\end{proposition}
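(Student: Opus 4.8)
The three equivalences are mutually parallel, each resting on the involutivity of $A$ (so that $^{-}$ and $^{\sim}$ are mutually inverse bijections, with $x^{-\sim}=x^{\sim-}=x$) together with the De Morgan-type dualities that relate the two operations appearing in each item. The plan is first to record how the quantifiers interact with the negations. Proposition \ref{minv-psBE-50}$(1)$ gives $\exists z=(\forall z^{-})^{\sim}$, and substituting $x^{-}$ for $x$ in the identity $\forall x=(\exists x^{\sim})^{-}$ of the same proposition (and using $x^{-\sim}=x$) yields $(\exists x)^{-}=\forall(x^{-})$; symmetrically $(\exists x)^{\sim}=\forall(x^{\sim})$, while Proposition \ref{minv-psBE-50}$(3)$ already supplies $\exists(x^{-})=(\forall x)^{-}$ and $\exists(x^{\sim})=(\forall x)^{\sim}$. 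These four quantifier--negation relations are the engine of the argument.

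I would illustrate with item $(2)$, since $(1)$ and $(3)$ run identically after swapping the relevant pair of operations. From Lemma \ref{minv-psBE-30-10} we have $x\oplus y=(y^{-}\odot x^{-})^{\sim}$, hence by involutivity $(x\oplus y)^{-}=y^{-}\odot x^{-}$. Applying $\exists z=(\forall z^{-})^{\sim}$ gives
\[
\exists(x\oplus y)=\bigl(\forall(y^{-}\odot x^{-})\bigr)^{\sim}.
\]
On the other hand, writing $\exists x\oplus\exists y=\bigl((\exists y)^{-}\odot(\exists x)^{-}\bigr)^{\sim}$ and substituting $(\exists x)^{-}=\forall(x^{-})$, $(\exists y)^{-}=\forall(y^{-})$ gives
\[
\exists x\oplus\exists y=\bigl(\forall(y^{-})\odot\forall(x^{-})\bigr)^{\sim}.
\]
Since $^{\sim}$ is a bijection, the identity $\exists(x\oplus y)=\exists x\oplus\exists y$ is therefore equivalent to $\forall(y^{-}\odot x^{-})=\forall(y^{-})\odot\forall(x^{-})$.

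It then remains to pass between this statement in the variables $x^{-},y^{-}$ and the original statement $\forall(u\odot v)=\forall u\odot\forall v$. Here I use that $^{-}$ is surjective: every element equals $(z^{\sim})^{-}$, so as $x,y$ range over $A$ the pairs $(y^{-},x^{-})$ range over all of $A\times A$; consequently the displayed equivalence holds for all $x,y$ exactly when $\forall$ preserves $\odot$, which proves $(2)$. For $(3)$ I would start instead from $x\odot y=(y^{-}\oplus x^{-})^{\sim}$ (Lemma \ref{minv-psBE-30-10}) and repeat the computation with $\odot$ and $\oplus$ interchanged; for $(1)$ I would use the lattice De Morgan law $(x\vee y)^{-}=x^{-}\wedge y^{-}$ --- obtained by substituting into $x\wedge y=(x^{\sim}\vee y^{\sim})^{-}$ and using involutivity --- in place of the $\oplus$--$\odot$ duality. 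The only real obstacle is bookkeeping: keeping the two distinct negations straight and respecting the order-reversal built into $\odot$ and $\oplus$ in the noncommutative setting. Once the four quantifier--negation relations above are in hand, each equivalence is a short substitution followed by the surjectivity remark.
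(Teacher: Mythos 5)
Your proof is correct and takes essentially the same route as the paper's: both rest on the quantifier--negation exchange identities ($(\exists x)^{-}=\forall x^{-}$, $\exists x^{-}=(\forall x)^{-}$, and their $^{\sim}$-counterparts) combined with the De Morgan-type dualities linking $\wedge/\vee$ and $\odot/\oplus$. The differences are only presentational: you compress the two implications into a single biconditional via injectivity of $^{\sim}$ plus surjectivity of $^{-}$ (where the paper computes one direction and says ``similarly''), and in item $(2)$ you correctly keep the order reversal $x\oplus y=(y^{-}\odot x^{-})^{\sim}$ from Lemma \ref{minv-psBE-30-10}, which the paper's displayed computation silently (and harmlessly, since the hypothesis is universally quantified) drops.
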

\begin{proof}
The proof uses Proposition \ref{mpsBE-50}$(3)$ and the relationships between the operations defined on an bounded commutative pseudo BE-algebra. \\ 
$(1)$ If $\forall (x\wedge y)=\forall x\wedge \forall y$, we have: \\
$\hspace*{2.0cm}$ $\exists(x\vee y)=\exists(x^{-}\wedge y^{-})^{\sim}=(\forall(x^{-}\wedge y^{-}))^{\sim}=
(\forall x^{-}\wedge y^{-})^{\sim}$ \\
$\hspace*{3.5cm}$ $=((\exists x)^{-}\wedge (\exists y)^{-}))^{\sim}=\exists x\vee \exists y$. \\
Similarly, $\exists (x\vee y)=\exists x\vee \exists y$ implies $\forall (x\wedge y)=\forall x\wedge \forall y$. \\
$(2)$ Suppose that $\forall (x\odot y)=\forall x\odot \forall y$, we get: \\ 
$\hspace*{2.0cm}$ $\exists(x\oplus y)=\exists(x^{-}\odot y^{-})^{\sim}=(\forall(x^{-}\odot y^{-}))^{\sim}=
(\forall x^{-}\odot \forall y^{-})^{\sim}$ \\
$\hspace*{3.5cm}$ $=((\exists x)^{-}\odot (\exists y)^{-}))^{\sim}=\exists x\oplus \exists y$. \\
Similarly, $\exists (x\oplus y)=\exists x\oplus \exists y$ implies $\forall (x\odot y)=\forall x\odot \forall y$. \\
$(3)$ If $\forall (x\oplus y)=\forall x\oplus \forall y$, we have: \\
$\hspace*{2.0cm}$ $\exists(x\odot y)=\exists(x^{-}\oplus y^{-})^{\sim}=(\forall(x^{-}\oplus y^{-}))^{\sim}=
(\forall x^{-}\oplus \forall y^{-})^{\sim}$ \\
$\hspace*{3.5cm}$ $=((\exists x)^{-}\oplus (\exists y)^{-}))^{\sim}=\exists x\odot \exists y$. \\
Similarly, $\exists (x\odot y)=\exists x\odot \exists y$ implies $\forall (x\oplus y)=\forall x\oplus \forall y$. 
\end{proof}

Inspired from \cite{Ior16}, in the following two theorems we construct monadic operators $(\exists,\forall)$ 
on a bounded commutative pseudo BE-algebra.   

\begin{theorem} \label{minv-psBE-200} 
Let $(A,\ra,\rs,0,1)$ be a bounded commutative pseudo BE-algebra and let $\tau:A\longrightarrow A$ satisfying 
the following conditions, for all $x, y\in A:$ \\
$(U_1)$ $\tau x\le x;$ \\
$(U_2)$ $(\tau x^{-})^{\sim}=(\tau x^{\sim})^{-};$ \\
$(U_3)$ $\tau (x\oplus (\tau y)^{-})=\tau x\oplus (\tau y)^{-}$,  
        $\tau ((\tau x)^{\sim}\oplus y)= (\tau x)^{\sim}\oplus \tau y;$ \\
$(U_4)$ $\tau (x\oplus \tau y)=\tau (\tau x\oplus y)=\tau x\oplus \tau y;$ \\
$(U_5)$ $\tau((x^{-}\oplus x^{-})^{\sim})=((\tau x)^{-}\oplus (\tau x)^{-})^{\sim}$, 
        $\tau((x^{\sim}\oplus x^{\sim})^{-})=((\tau x)^{\sim}\oplus (\tau x)^{\sim})^{-};$ \\
$(U_6)$ $\tau(x\oplus x)=\tau x\oplus \tau x$. \\           
Consider the maps $\forall, \exists : A\longrightarrow A$ defined by $\forall x=\tau x$, 
$\exists x=(\tau x^{-})^{\sim}=(\tau x^{\sim})^{-}$, for all $x\in A$. Then $(A,\exists,\forall)$ is a monadic 
bounded commutative pseudo BE-algebra.
\end{theorem}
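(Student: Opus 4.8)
The plan is to translate every one of the defining conditions $(M_1)$–$(M_7)$ into the language of $\oplus$ and the two negations $^-,^\sim$, in which the hypotheses $(U_1)$–$(U_6)$ are visibly designed to live, and then read each identity off the matching $(U_i)$. I would first record a short dictionary. The equation $(U_2)$ is precisely what makes $\exists$ well defined, since it asserts $(\tau x^-)^\sim=(\tau x^\sim)^-$. Since $A$ is bounded commutative, it is involutive, so the defining equalities $x\oplus y=y^\sim\ra x=x^-\rs y$ invert to $x\ra y=y\oplus x^-$ and $x\rs y=x^\sim\oplus y$. Applying $^-$ and $^\sim$ to the two presentations of $\exists$ and using involutivity gives the two key identities $(\exists x)^-=\tau(x^-)$ and $(\exists x)^\sim=\tau(x^\sim)$. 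Finally, $(U_1)$ forces $\tau 0\le 0$, hence $\tau 0=0$; specializing $(U_3)$ to $x:=0$ (first half) and to $y:=0$ (second half) and using that $0$ is the neutral element of $\oplus$ then yields $\tau((\tau x)^-)=(\tau x)^-$ and $\tau((\tau x)^\sim)=(\tau x)^\sim$.

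With this dictionary the easy cases are immediate. For $(M_2)$ I would just cite $(U_1)$, and $(M_7)$ is literally $(U_6)$. For $(M_1)$, apply $(U_1)$ at $x^-$ to get $\tau(x^-)\le x^-$; then the antitonicity of $^\sim$ (Proposition \ref{psBE-25}$(3)$) together with involutivity gives $x=(x^-)^\sim\le(\tau x^-)^\sim=\exists x$, which by $(psBE_5)$ is exactly $x\ra\exists x=x\rs\exists x=1$. For $(M_5)$, compute $\exists\forall x=(\tau((\tau x)^-))^\sim=((\tau x)^-)^\sim=\tau x=\forall x$ using the last dictionary identity. For $(M_6)$, write $x\odot x=(x^-\oplus x^-)^\sim$ and apply the first half of $(U_5)$ to reach $((\tau x)^-\oplus(\tau x)^-)^\sim=\forall x\odot\forall x$.

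The substantive cases are $(M_3)$ and $(M_4)$. For the first identity of $(M_3)$ I would write $x\ra\exists y=\exists y\oplus x^-$ and present $\exists y$ in the form $(\tau(y^-))^\sim$, so that the second half of $(U_3)$ applies with inner argument $y^-$ and outer summand $x^-$, giving $\tau(\exists y\oplus x^-)=\exists y\oplus\tau(x^-)=\exists y\oplus(\exists x)^-=\exists x\ra\exists y$. The $\rs$-identity is the mirror image: write $x\rs\exists y=x^\sim\oplus\exists y$ with $\exists y=(\tau(y^\sim))^-$ and invoke the first half of $(U_3)$. For $(M_4)$ the same translation turns $\exists x\ra y$ into $y\oplus(\exists x)^-=y\oplus\tau(x^-)$, and then $(U_4)$ in the form $\tau(y\oplus\tau z)=\tau y\oplus\tau z$ (with $z=x^-$) gives $\forall(\exists x\ra y)=\forall y\oplus(\exists x)^-=\exists x\ra\forall y$; the $\rs$-identity uses the other half of $(U_4)$, namely $\tau(\tau z\oplus y)=\tau z\oplus\tau y$.

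The only genuine difficulty is bookkeeping in the non-commutative setting: because $^-$ and $^\sim$ differ, one must choose the correct one of the two presentations $(\tau y^-)^\sim$ and $(\tau y^\sim)^-$ of $\exists y$ so that its leading negation matches the side on which $\tau$ is wrapped in $(U_3)$, and one must consistently pair $\ra$ with $y\oplus x^-$ and $\rs$ with $x^\sim\oplus y$. Once this pairing is fixed, each of $(M_1)$–$(M_7)$ collapses to a single substitution into the appropriate $(U_i)$, so no genuinely hard step remains beyond verifying the dictionary.
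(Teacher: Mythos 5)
Your proposal is correct and follows essentially the same route as the paper: both verify $(M_1)$--$(M_7)$ axiom by axiom by passing between $\ra,\rs$ and $\oplus$ via the translations $x\ra y=y\oplus x^{-}$, $x\rs y=x^{\sim}\oplus y$ together with the dictionary identities $(\exists x)^{-}=\tau x^{-}$, $(\exists x)^{\sim}=\tau x^{\sim}$, matching $(M_3)$ to $(U_3)$, $(M_4)$ to $(U_4)$, $(M_6)$ to $(U_5)$ and $(M_7)$ to $(U_6)$. The only (harmless) divergence is $(M_5)$: you derive it from the fixed-point identity $\tau((\tau x)^{-})=(\tau x)^{-}$, obtained by specializing $(U_3)$ at $0$ after noting $\tau 0=0$, whereas the paper deduces it from the already-proved $(M_3)$ together with $\exists 1=1$; both computations are valid.
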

\begin{proof}
Obviously, axiom $(M_2)$ folows by $(U_1)$. 
By $(U_1)$ we also have $\tau x^{-}\le x^{-}$, so $x^{-\sim}\le (\tau x^{-})^{\sim}$, that is $x\le \exists x$, 
for all $x\in A$. Hence $(M_1)$ is satisfied. \\ 
It is easy to see that, from $\forall x\le x$ and $x\le \exists x$ we get $\forall 0=0$ and $\exists 1=1$. 
Replacing successively $x$ by $x^{\sim}$ and $x$ by $x^{-}$ in 
$\exists x=(\forall x^{-})^{\sim}=(\forall x^{\sim})^{-}$ we have 
$\forall x=(\exists x^{\sim})^{-}=(\exists x^{-})^{\sim}$. 
We also have $\exists x^{-}=(\forall x^{-\sim})^{-}=(\forall x)^{-}$ and similarly, 
$\exists x^{\sim}=(\forall x)^{\sim}$, $\forall x^{-}=(\exists x)^{-}$, $\forall x^{\sim}=(\exists x)^{\sim}$. \\ 
From the second equality of $(U_3)$ we have $\tau ((\tau y)^{\sim}\oplus x)= (\tau y)^{\sim}\oplus \tau x$, 
hence $\tau (x^{\sim}\ra (\tau y)^{\sim})=(\tau x)^{\sim}\ra (\tau y)^{\sim}$. Replacing $x$ by $x^{-}$ and 
$y$ by $y^{-}$ we get $\tau (x\ra (\tau y^{-})^{\sim})=(\tau x^{-})^{\sim}\ra (\tau y^{-})^{\sim}$.
It follows that $\forall (x\ra \exists y)=\exists x\ra \exists y$. 
Similarly, from $\tau (x\oplus (\tau y)^{-})=\tau x\oplus (\tau y)^{-}$ we get 
$\tau (x^{-}\rs (\tau y)^{-})=(\tau x)^{-}\rs (\tau y)^{-}$. 
Replacing $x$ by $x^{\sim}$ and $y$ by $y^{\sim}$, it follows that 
$\tau (x\rs (\tau y^{\sim})^{-})=(\tau x^{\sim})^{-}\rs (\tau y^{\sim})^{-}$, that is 
$\forall (x\rs \exists y)=\exists x\rs \exists y$, for all $x, y\in A$. Thus, axiom $(M_3)$ is verified. 
By $(U_4)$ we have $\tau(y\oplus \tau x)=\tau y\oplus \tau x$, so that 
$\tau ((\tau x)^{\sim}\ra y)=(\tau x)^{\sim}\ra \tau y$. Replacing $x$ by $x^{-}$ we get 
$\tau ((\tau x^{-})^{\sim}\ra y)=(\tau x^{-})^{\sim}\ra \tau y$, that is 
$\forall (\exists x\ra y)=\exists x\ra \forall y$. 
Similary, from $\tau (\tau x\oplus y)=\tau x\oplus \tau y$ we get 
$\tau((\tau x)^{-}\rs y)=(\tau x)^{-}\rs \tau y$, and replacing $x$ by $x^{\sim}$ we have 
$\tau((\tau x^{\sim})^{-}\rs y)=(\tau x^{\sim})^{-}\rs \tau y$, thus 
$\forall(\exists x\rs y)=\exists x\rs \forall y$, for all $x, y\in A$.  Hence, axiom $(M_4)$ is satisfied. 
Using $(M_3)$ and the identity $\exists x^{-}=(\forall x)^{-}$, it follows that \\
$\hspace*{2cm}$   $\exists \forall x=\exists ((\exists x^{\sim})^{-})=(\forall (\exists x^{\sim}))^{-}=
                   (\forall(1\ra \exists x^{\sim}))^{-}$ \\
$\hspace*{2.75cm}$ $=(\exists 1\ra \exists x^{\sim})^{-}=(1\ra \exists x^{\sim}))^{-}=
(\exists x^{\sim})^{-}=\forall x$, \\
and axiom $(M_5)$ is verified. \\
The identities from $(U_5)$ lead to $\forall(x\odot x)=\forall x\odot \forall x$, that is $(M_6)$. 
Finally, we can see that the identity from $(U_6)$ is in fact $(M_7)$. 
We conclude that $(A,\forall,\exists)$ is a monadic bounded commutative pseudo BE-algebra.
\end{proof}

\begin{theorem} \label{minv-psBE-210} 
Let $(A,\ra,\rs,0,1)$ be a bounded commutative pseudo BE-algebra and let $\sigma:A\longrightarrow A$ satisfying 
the following conditions, for all $x, y\in A:$ \\
$(E_1)$ $x\le \sigma x;$ \\
$(E_2)$ $(\sigma x^{-})^{\sim}=(\sigma x^{\sim})^{-};$ \\
$(E_3)$ $\sigma (x\odot (\sigma y)^{\sim})= \sigma x\odot (\sigma y)^{\sim}$, 
        $\sigma ((\sigma x)^{-}\odot y)=(\sigma x)^{-}\odot \sigma y;$ \\         
$(E_4)$ $\sigma (x\odot \sigma y)=\sigma (\sigma x\odot y)=\sigma x\odot \sigma y;$ \\
$(E_5)$ $\sigma((x^{-}\odot x^{-})^{\sim})=((\sigma x)^{-}\odot (\sigma x)^{-})^{\sim}$, 
        $\sigma((x^{\sim}\odot x^{\sim})^{-})=((\sigma x)^{\sim}\odot (\sigma x)^{\sim})^{-}$. \\
$(E_6)$ $\sigma(x\odot x)=\sigma x\odot \sigma x$. \\           
Consider the maps $\forall, \exists : A\longrightarrow A$ defined by $\exists x=\sigma x$,  
$\forall x=(\sigma x^{-})^{\sim}=(\sigma x^{\sim})^{-}$, for all $x\in A$. 
Then $(A,\exists,\forall)$ is a monadic bound commutative pseudo BE-algebra.
\end{theorem}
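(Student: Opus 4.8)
The plan is to treat this as the existential-quantifier analogue of Theorem \ref{minv-psBE-200}: there $\tau$ played the role of $\forall$, here $\sigma$ plays the role of $\exists$, and the two setups are exchanged by the de Morgan duality of the (involutive) algebra $A$. Since $A$ is bounded commutative it is involutive, so $x^{-\sim}=x^{\sim-}=x$ holds throughout. First I would record the negation identities forced by the two definitions $\exists x=\sigma x$ and $\forall x=(\sigma x^{-})^{\sim}=(\sigma x^{\sim})^{-}$ (the two expressions for $\forall x$ agree by $(E_2)$): applying $^{-}$ and $^{\sim}$ and cancelling via involutivity yields $\forall x=(\exists x^{-})^{\sim}=(\exists x^{\sim})^{-}$ together with $\exists x^{-}=(\forall x)^{-}$, $\exists x^{\sim}=(\forall x)^{\sim}$, $\forall x^{-}=(\exists x)^{-}$, $\forall x^{\sim}=(\exists x)^{\sim}$ (these mirror Proposition \ref{minv-psBE-50}). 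Then $(M_1)$ is immediate from $(E_1)$, since $x\le\sigma x=\exists x$, and $(M_2)$ follows from $(E_1)$ applied to $x^{-}$ together with antitonicity of the negation $^{\sim}$ (Proposition \ref{psBE-25}$(3)$), which gives $\forall x=(\sigma x^{-})^{\sim}\le x^{-\sim}=x$.

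The core is $(M_3)$ and $(M_4)$, and this is where I expect the main difficulty. The conditions $(E_3)$, $(E_4)$ are stated in terms of $\odot$, whereas the monadic axioms are in terms of $\ra$ and $\rs$, so I first need the conversion identities
\[
x\odot y=(x\ra y^{-})^{\sim}=(y\rs x^{\sim})^{-},
\]
which I would derive from Lemma \ref{minv-psBE-30-10}$(2)$ and $x\oplus y=y^{\sim}\ra x=x^{-}\rs y$ using involutivity. With these in hand, the first identity of $(E_3)$, namely $\sigma(x\odot(\sigma y)^{\sim})=\sigma x\odot(\sigma y)^{\sim}$, rewrites (again via involutivity) as $\sigma((x\ra\sigma y)^{\sim})=(\sigma x\ra\sigma y)^{\sim}$; applying $^{-}$ to both sides and reading the left-hand side through $\forall w=(\sigma w^{\sim})^{-}$ gives precisely $\forall(x\ra\exists y)=\exists x\ra\exists y$. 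The second identity of $(E_3)$ produces the $\rs$-half of $(M_3)$ in the same way, and $(E_4)$ produces $(M_4)$ after the same rewriting followed by one substitution ($y\mapsto y^{\sim}$, resp.\ $x\mapsto x^{-}$) and the identities $(\sigma y)^{-}=\forall y^{-}$, $(\sigma x)^{\sim}=\forall x^{\sim}$. The delicate point throughout is keeping the left negation $^{-}$ and the right negation $^{\sim}$ strictly apart—they do not coincide in the non-commutative setting—and choosing which of the two displayed forms to apply so that only the involutive cancellations $w^{-\sim}=w$, $w^{\sim-}=w$ occur, never the invalid $w^{--}$ or $w^{\sim\sim}$.

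It remains to verify $(M_5)$, $(M_6)$, $(M_7)$. For $(M_5)$ I would argue exactly as in Theorem \ref{minv-psBE-200}: using $\forall x=(\exists x^{\sim})^{-}$, the identity $\exists(w^{-})=(\forall w)^{-}$, the fact $\exists 1=1$, and the already-proved $(M_3)$ (which gives $\forall\exists x^{\sim}=\exists x^{\sim}$, as in Proposition \ref{mpsBE-20}$(3)$), one computes $\exists\forall x=\exists((\exists x^{\sim})^{-})=(\forall\exists x^{\sim})^{-}=(\exists x^{\sim})^{-}=\forall x$. Finally, by Lemma \ref{minv-psBE-30-10}$(1)$ the first identity of $(E_5)$ is just $\exists(x\oplus x)=\exists x\oplus\exists x$ and $(E_6)$ is $\exists(x\odot x)=\exists x\odot\exists x$; rewriting $\forall(x\odot x)=\forall((x^{-}\oplus x^{-})^{\sim})$ and $\forall(x\oplus x)=\forall((x^{-}\odot x^{-})^{\sim})$ through $\forall(z^{\sim})=(\exists z)^{\sim}$, the negation identities, and the diagonal de Morgan formulas of Lemma \ref{minv-psBE-30-10} then yields $(M_6)$ and $(M_7)$ respectively. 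This is the same computation that underlies the equivalences in Proposition \ref{minv-psBE-70}$(2),(3)$, and since it relies only on the negation identities there is no circularity. Having established $(M_1)$--$(M_7)$, we conclude that $(A,\exists,\forall)$ is a monadic bounded commutative pseudo BE-algebra.
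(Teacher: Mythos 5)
Your proof is correct, but it is organized differently from the paper's. The paper does not verify the monadic axioms directly from $(E_1)$--$(E_6)$: it sets $\tau x=(\sigma x^{-})^{\sim}=(\sigma x^{\sim})^{-}$ and shows, entirely through $\oplus$-manipulations, that $(E_1)$--$(E_6)$ imply the conditions $(U_1)$--$(U_6)$ of Theorem \ref{minv-psBE-200}, then invokes that theorem as a black box. You instead check $(M_1)$--$(M_7)$ head-on, converting the $\odot$-hypotheses into $\ra$/$\rs$-form via $x\odot y=(x\ra y^{-})^{\sim}=(y\rs x^{\sim})^{-}$ (an identity the paper itself records in Section 5, so it is available), and you replicate for $\sigma$ the internal structure of the proof of Theorem \ref{minv-psBE-200} — in particular the negation identities $\exists x^{-}=(\forall x)^{-}$, $\forall x^{-}=(\exists x)^{-}$, etc., and the $(M_5)$ computation $\exists\forall x=\exists((\exists x^{\sim})^{-})=(\forall\exists x^{\sim})^{-}=(\exists x^{\sim})^{-}=\forall x$, which is legitimate since it uses only $(M_3)$ (already established), $\exists 1=1$, and involutivity, so there is no circularity. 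I verified the key conversions: the first identity of $(E_3)$ does rewrite to $\sigma((x\ra\sigma y)^{\sim})=(\sigma x\ra\sigma y)^{\sim}$ and yields $\forall(x\ra\exists y)=\exists x\ra\exists y$ after applying $^{-}$; the two halves of $(E_4)$ give $(M_4)$ after the substitutions $y\mapsto y^{\sim}$ and $x\mapsto x^{-}$ exactly as you indicate; and the $(E_5)$/$(E_6)$ translations via Lemma \ref{minv-psBE-30-10} give $(M_6)$ and $(M_7)$ (note your routing is the interesting one: the first identity of $(E_5)$, read as $\exists(x\oplus x)=\exists x\oplus\exists x$, produces $(M_6)$, while $(E_6)$ produces $(M_7)$ — the cross-over matching Proposition \ref{minv-psBE-70}$(2)$,$(3)$). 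What the two approaches buy: the paper's reduction reuses Theorem \ref{minv-psBE-200} and makes the $\tau\leftrightarrow\sigma$ duality the visible organizing principle, at the cost of long chains of $\oplus$-rewritings; your direct verification is shorter on $(M_3)$/$(M_4)$ (one negation application per identity instead of four-line rewriting chains), is self-contained, and makes explicit exactly which negation identities carry the argument — but it duplicates work already done in Theorem \ref{minv-psBE-200} rather than quoting it.
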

\begin{proof} 
Denote $\tau x=(\sigma x^{-})^{\sim}=(\sigma x^{\sim})^{-}$. 
We show that the axioms $(E_1)$-$(E_6)$ imply the axioms $(U_1)$-$(U_6)$.  
By $(E_1)$ we have $x^{-}\le \sigma x^{-}$, hence $(\sigma x^{-})^{\sim}\le x^{-\sim}=x$, that is $\tau x\le x$, 
so $(E_1)$ implies $(U_1)$. 
From $\tau x=(\sigma x^{-})^{\sim}$, replacing $x$ by $x^{\sim}$ we get $\tau x^{\sim}=(\sigma x)^{\sim}$, 
so $\sigma x=(\tau x^{\sim})^{-}$ and similary $\sigma x=(\tau x^{-})^{\sim}$. 
Hence $(\tau x^{-})^{\sim}=(\tau x^{\sim})^{-}$, and $(E_2)$ implies $(U_2)$. \\
It is easy to check that $\sigma x^{-}=(\tau x)^{-}$ and $\sigma x^{\sim}=(\tau x)^{\sim}$. 
Applying Lemma \ref{minv-psBE-30-10} and the definition of $\sigma$, the first equality of $(E_3)$ can be written 
successively: \\
$\hspace*{3cm}$ $\sigma (y\odot (\sigma x)^{\sim})= \sigma y\odot (\sigma x)^{\sim}$, \\
$\hspace*{3cm}$ $\sigma((\sigma x)^{\sim-}\oplus y^{-})^{\sim}=((\sigma x)^{\sim-}\oplus (\sigma y)^{-})^{\sim}$, \\
$\hspace*{3cm}$ $\sigma(\sigma x\oplus y^{-})^{\sim}=(\sigma x\oplus (\sigma y)^{-})^{\sim}$. \\
Replacing $x$ by $x^{\sim}$ and $y$ by $y^{\sim}$ it follows that: \\
$\hspace*{3cm}$ $\sigma(\sigma x^{\sim}\oplus y)^{\sim}=(\sigma x^{\sim}\oplus (\sigma y^{\sim})^{-})^{\sim}$, \\
$\hspace*{3cm}$ $\sigma((\tau x)^{\sim}\oplus y)^{\sim}=((\tau x)^{\sim}\oplus \tau y)^{\sim}$, \\
$\hspace*{3cm}$ $(\tau((\tau x)^{\sim}\oplus y))^{\sim}=((\tau x)^{\sim}\oplus \tau y)^{\sim}$, \\
$\hspace*{3cm}$ $\tau((\tau x)^{\sim}\oplus y)=(\tau x)^{\sim}\oplus \tau y$. \\
Similarly, the second equality of $(E_3)$ can be written as follows: \\
$\hspace*{3cm}$ $\sigma ((\sigma y)^{-}\odot x)=(\sigma y)^{-}\odot \sigma x$, \\
$\hspace*{3cm}$ $\sigma(x^{\sim}\oplus (\sigma y)^{-\sim})^{-}=((\sigma x)^{\sim}\oplus (\sigma y)^{-\sim})^{-}$, \\
$\hspace*{3cm}$ $\sigma(x^{\sim}\oplus \sigma y)^{-}=((\sigma x)^{\sim}\oplus \sigma y)^{-}$. \\
Replacing $x$ by $x^{-}$ and $y$ by $y^{-}$ we get: \\
$\hspace*{3cm}$ $\sigma(x\oplus \sigma y^{-})^{-}=((\sigma x^{-})^{\sim}\oplus \sigma y^{-})^{-}$, \\
$\hspace*{3cm}$ $\sigma(x\oplus (\tau y)^{-})^{-}=(\tau x\oplus (\tau y)^{-})^{-}$, \\
$\hspace*{3cm}$ $(\tau(x\oplus (\tau y)^{-})^{-}=(\tau x\oplus (\tau y)^{-})^{-}$, \\
$\hspace*{3cm}$ $\tau(x\oplus (\tau y)^{-})=\tau x\oplus (\tau y)^{-}$. \\
It follows that $(E_3)$ implies $(U_3)$. \\
The second equality of $(E_4)$ can be written successively: \\
$\hspace*{3cm}$ $\sigma (\sigma x\odot y)=\sigma x\odot \sigma y$, \\
$\hspace*{3cm}$ $\sigma (\sigma y\odot x)=\sigma y\odot \sigma x$, \\
$\hspace*{3cm}$ $\sigma(x^{\sim}\oplus (\sigma y)^{\sim})^{-}=((\sigma x)^{\sim}\oplus (\sigma y)^{\sim})^{-}$. \\
Replacing $x$ by $x^{-}$ and $y$ by $y^{-}$ we get: \\
$\hspace*{3cm}$ $\sigma (x\oplus (\sigma y^{-})^{\sim})^{-}=((\sigma x^{-})^{\sim}\oplus (\sigma y^{-})^{\sim})^{-}$, \\
$\hspace*{3cm}$ $\sigma (x\oplus \tau y)^{-}=(\tau x\oplus \tau y)^{-}$, \\
$\hspace*{3cm}$ $(\tau (x\oplus \tau y))^{-}=(\tau x\oplus \tau y)^{-}$, \\
$\hspace*{3cm}$ $\tau (x\oplus \tau y)=\tau x\oplus \tau y$. \\
Hence $(E_4)$ implies $(U_4)$. 
The first identity of $(E_5)$ can be written as follows: \\
$\hspace*{3cm}$ $\sigma((x^{-}\odot x^{-})^{\sim})=((\sigma x)^{-}\odot (\sigma x)^{-})^{\sim}$, \\
$\hspace*{3cm}$ $(\tau (x^{-}\odot x^{-})^{\sim}=(\tau x^{-}\odot \tau x^{-})^{\sim}$, \\
$\hspace*{3cm}$ $\tau (x^{-}\odot x^{-})=\tau x^{-}\odot \tau x^{-}$, \\
and replacing $x$ by $x^{\sim}$: \\
$\hspace*{3cm}$ $\tau (x\odot x)=\tau x\odot \tau x)$, \\
$\hspace*{3cm}$ $\tau((x^{-}\oplus x^{-})^{\sim})=((\tau x)^{-}\oplus (\tau x)^{-})^{\sim}$, \\
that is the first identity from $(U_5)$. Similarly we get the second identity from $(U_5)$. \\ 
From $(E_6)$ we get successively: \\
$\hspace*{3cm}$ $\sigma(x\odot x)=\sigma x\odot \sigma x$, \\
$\hspace*{3cm}$ $\sigma((x^{-}\oplus x^{-})^{\sim})=((\sigma x)^{-}\oplus (\sigma x)^{-})^{\sim}$, \\
$\hspace*{3cm}$ $(\tau(x^{-}\oplus x^{-}))^{\sim}=(\tau x^{-}\oplus \tau x^{-})^{\sim}$, \\
$\hspace*{3cm}$ $\tau(x^{-}\oplus x^{-})=\tau x^{-}\oplus \tau x^{-}$. \\
Replacing $x$ by $x^{\sim}$ we get $(U_6)$. 
Applying Theorem \ref{minv-psBE-200} it follows that $(A,\forall,\exists)$ is a monadic bound commutative 
pseudo BE-algebra. 
\end{proof}

The following result is a consequence of Theorems \ref{minv-psBE-200} and \ref{minv-psBE-210}. 

\begin{theorem} \label{minv-psBE-220}
In a monadic bounded commutative pseudo BE-algebra $(A,\exists,\forall)$ there is a one-to-one correspondence 
between the universal and existential quantifiers. 
If $\forall$ is a universal quantifier on $A$, then the corresponding existential quantifier $\exists$ on $A$ is 
defined by $\exists x=(\forall x^{-})^{\sim}=(\forall x^{\sim})^{-}$, for all $x\in A$.  
Dually, given an existential quantifier $\exists$ on $A$, the corresponding universal quantifier is defined as 
$\forall x=(\exists x^{-})^{\sim}=(\exists x^{\sim})^{-}$, for all $x\in A$.
\end{theorem}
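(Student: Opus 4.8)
The plan is to realize the correspondence by the two explicit negation maps and show they are mutually inverse bijections. Define $\Phi$ on universal quantifiers by $\Phi(\forall)(x)=(\forall x^{-})^{\sim}$ and $\Psi$ on existential quantifiers by $\Psi(\exists)(x)=(\exists x^{-})^{\sim}$. The first task is to see that $\Phi$ and $\Psi$ land in the correct sets. A universal quantifier $\forall$ satisfies the hypotheses $(U_1)$--$(U_6)$ of Theorem \ref{minv-psBE-200} with $\tau:=\forall$: indeed $(U_1)$ is $(M_2)$, $(U_2)$ is the identity $(\forall x^{-})^{\sim}=(\forall x^{\sim})^{-}$ supplied by Proposition \ref{minv-psBE-50}(1), and $(U_3)$--$(U_6)$ are the rewritings of $(M_3)$, $(M_4)$, $(M_6)$, $(M_7)$ through $x\oplus y=y^{\sim}\ra x=x^{-}\rs y$ carried out in that proof. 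Hence Theorem \ref{minv-psBE-200} makes $(A,\Phi(\forall),\forall)$ a monadic bounded commutative pseudo BE-algebra, so $\Phi(\forall)$ is a genuine existential quantifier; dually, Theorem \ref{minv-psBE-210} shows that $\Psi(\exists)$ is a genuine universal quantifier.

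The second task is to check $\Psi\circ\Phi=\mathrm{Id}$ and $\Phi\circ\Psi=\mathrm{Id}$, and this is exactly where involutivity of the bounded commutative algebra ($x^{-\sim}=x^{\sim-}=x$) enters. Writing $\exists:=\Phi(\forall)$ and using its second form $\exists x=(\forall x^{\sim})^{-}$ from Proposition \ref{minv-psBE-50}(1), I get $\exists x^{-}=(\forall x^{-\sim})^{-}=(\forall x)^{-}$, whence $\Psi(\exists)(x)=(\exists x^{-})^{\sim}=((\forall x)^{-})^{\sim}=(\forall x)^{-\sim}=\forall x$. The analogous computation gives $\Phi(\Psi(\exists))=\exists$. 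Therefore $\Phi$ and $\Psi$ are inverse bijections between universal and existential quantifiers, which yields the asserted one-to-one correspondence together with the displayed formulas.

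That the pairing is genuinely single-valued in both directions --- the ``one-to-one'' part --- also follows directly from Proposition \ref{minv-psBE-50}(1): for any monadic operator $(\exists,\forall)$ one has $\exists x=(\forall x^{-})^{\sim}$ and $\forall x=(\exists x^{-})^{\sim}$, so a given $\forall$ cannot occur with two distinct partners $\exists$, nor conversely. The main obstacle is the well-definedness step: one must be sure that an arbitrary universal quantifier satisfies the entire list $(U_1)$--$(U_6)$ (and dually for $(E_1)$--$(E_6)$), i.e. that the monadic axioms translate into the product-and-sum conditions required by Theorems \ref{minv-psBE-200} and \ref{minv-psBE-210}; once that translation is granted, the bijection itself is the short involutive computation above.
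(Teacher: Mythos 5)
Your proposal is correct and takes essentially the same route as the paper: the paper proves this theorem simply by declaring it a consequence of Theorems \ref{minv-psBE-200} and \ref{minv-psBE-210} (with Proposition \ref{minv-psBE-50}$(1)$ supplying the formulas $\exists x=(\forall x^{-})^{\sim}$, $\forall x=(\exists x^{-})^{\sim}$ for any monadic pair), which are exactly the ingredients you assemble into the explicit mutually inverse maps $\Phi$ and $\Psi$. Your remaining worry about well-definedness is harmless --- the translations between $(M_3),(M_4),(M_6),(M_7)$ and $(U_3)$--$(U_6)$ in the proof of Theorem \ref{minv-psBE-200} are reversible, since they consist of definitional unfoldings of $\oplus$ and substitutions $x\mapsto x^{-}$, $x\mapsto x^{\sim}$, which are bijections by involutivity, and $(U_2)$ is Proposition \ref{minv-psBE-50}$(1)$ --- so your argument in fact supplies detail the paper leaves implicit.
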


\begin{remark} \label{minv-psBE-230}
The pseudo BE-algebra $(A,\ra,\rs,0,1)$ from Theorems \ref{minv-psBE-200} and \ref{minv-psBE-210} need not be commutative. These results are still valid for the case of involutive pseudo BE(A)-algebras. 
Indeed, consider the structure $(A,\ra,\rs,1)$, where the operations $\ra$ and $\rs$ on $A=\{1,a,b,c,d,e\}$ 
are defined as follows:
\[
\begin{array}{c|cccccc}
\ra & 1 & a & b & c & d & e \\ \hline
1 & 1 & a & b & c & d & e \\
a & 1 & 1 & d & 1 & 1 & d \\
b & 1 & c & 1 & 1 & 1 & c \\
c & 1 & a & d & 1 & d & a \\
d & 1 & c & b & c & 1 & b \\
e & 1 & 1 & 1 & 1 & 1 & 1 
\end{array}
\hspace{10mm}
\begin{array}{c|cccccc}
\rs & 1 & a & b & c & d & e \\ \hline
1 & 1 & a & b & c & d & e \\
a & 1 & 1 & c & 1 & 1 & c \\
b & 1 & d & 1 & 1 & 1 & d \\
c & 1 & d & b & 1 & d & b \\
d & 1 & a & c & c & 1 & a \\
e & 1 & 1 & 1 & 1 & 1 & 1
\end{array}
.
\]

Then $(A,\ra,\rs,e,1)$ is an involutive pseudo BCK-algebra, so it is an involutive pseudo BE(A)-algebra, 
but not commutative (\cite{Ciu30}).  
Consider the maps $\tau, \sigma, \forall, \exists:A\longrightarrow A$ defined by the following table:
\[
\begin{array}{c|cccccc}
               x    & 1 & a & b & c  & d & e \\ \hline
\tau x              & 1 & e & e & e & e & e \\
(\tau x^{-})^{\sim} & 1 & 1 & 1 & 1 & 1 & e \\
(\tau x^{\sim})^{-} & 1 & 1 & 1 & 1 & 1 & e \\
\sigma x            & 1 & 1 & 1 & 1 & 1 & e \\
\forall x=\tau x    & 1 & e & e & e & e & e \\
\exists x=\sigma x  & 1 & 1 & 1 & 1 & 1 & e
\end{array}
.   
\]
One can easily check that the operators $\tau$ and $\sigma$ satisfy the conditions from Theorems \ref{minv-psBE-200} and \ref{minv-psBE-210}, respectively, hence $(\exists,\forall)$ is a monadic involutive pseudo BE-algebra. 
\end{remark}

$\vspace*{5mm}$

\section{Monadic classes of pseudo BE-algebras}

We investigate properties of particular classes of monadic pseudo BCK-algebras, such as monadic pseudo BCK-algebras, 
monadic pseudo BCK-semilattices, monadic pseudo BCK(pP)-algebras, monadic pseudo-hoops. 
In the case of pseudo BCK-algebras it is proved that, if two universal quantifiers have the same image, 
then the two quantifiers coincide. 
We also show that the universal and existential quantifiers on a bounded commutative pseudo BE-algebra are also 
quantifiers on the corresponding pseudo MV-algebras. \\

We recall that, according to Corollary \ref{mpsBE-25-10}, if $(A,\exists,\forall)$ is a monadic pseudo BCK-algebra, 
then $\exists$ and $\forall$ are isotone. 

\begin{proposition} \label{mpsBCK-10} Let $(A,\exists,\forall)$ be a monadic pseudo BCK-algebra. Then the following hold, 
for all $x, y\in A:$ \\
$(1)$ $\forall(x\ra y)\rs (\forall x\ra \forall y)=1$, $\forall(x\rs y)\ra (\forall x\rs \forall y)=1;$ \\
$(2)$ $\exists(x\ra \exists y)\rs \forall(\forall x\ra \exists y)=1$, 
      $\exists(x\rs \exists y)\ra \forall(\forall x\rs \exists y)=1;$ \\
$(3)$ $\exists(\exists x\ra y)\rs (\exists x\ra \exists y)=1$, 
      $\exists(\exists x\rs y)\ra (\exists x\rs \exists y)=1;$ \\
$(4)$ $\forall(x\ra y)\rs (\exists x\ra \exists y)=1$, $\forall(x\rs y)\ra (\exists x\rs \exists y)=1$.     
\end{proposition}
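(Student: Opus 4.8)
\section*{Proof proposal}

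The plan is to rewrite each of the four identities as an inequality and prove it by the same three-move pattern: start from one of the defining inclusions $\forall u\le u$ (from $(M_2)$) or $u\le \exists u$ (from $(M_1)$), push it through a monotonicity axiom of the underlying pseudo BCK-algebra to compare two implications, then apply the relevant isotone quantifier and collapse the result with an identity from Proposition \ref{mpsBE-20}. Throughout I use that in a pseudo BCK-algebra both $a\ra b=1$ and $a\rs b=1$ are equivalent to $a\le b$, that $\le$ is a partial order, that the operations are antitone in the first variable (A) and monotone in the second (M), and that $\exists,\forall$ are isotone by Corollary \ref{mpsBE-25-10}$(2)$. I treat only the $\ra$ half of each part; the $\rs$ half is identical after swapping $\ra\leftrightarrow\rs$ and invoking the companion identities.

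For $(1)$ the claim is $\forall(x\ra y)\le \forall x\ra \forall y$. Since $\forall x\le x$ by $(M_2)$, antitonicity (A) gives $x\ra y\le \forall x\ra y$; applying the isotone map $\forall$ and then Proposition \ref{mpsBE-20}$(9)$ yields $\forall(x\ra y)\le \forall(\forall x\ra y)=\forall x\ra \forall y$. For $(4)$ the claim is $\forall(x\ra y)\le \exists x\ra \exists y$; here I start from $y\le \exists y$ (by $(M_1)$), so monotonicity (M) gives $x\ra y\le x\ra \exists y$, and applying $\forall$ together with $(M_3)$ gives $\forall(x\ra y)\le \forall(x\ra \exists y)=\exists x\ra \exists y$.

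For $(3)$ the claim is $\exists(\exists x\ra y)\le \exists x\ra \exists y$. From $y\le \exists y$ and (M) I get $\exists x\ra y\le \exists x\ra \exists y$, so by the isotone $\exists$ it suffices to observe that $\exists x\ra \exists y$ is a fixed element: Proposition \ref{mpsBE-20}$(7)$ gives $\forall(\exists x\ra \exists y)=\exists x\ra \exists y$, whence by Proposition \ref{mpsBE-20}$(4)$ also $\exists(\exists x\ra \exists y)=\exists x\ra \exists y$; thus $\exists(\exists x\ra y)\le \exists(\exists x\ra \exists y)=\exists x\ra \exists y$. For $(2)$ the claim is $\exists(x\ra \exists y)\le \forall(\forall x\ra \exists y)$. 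Proposition \ref{mpsBE-20}$(10)$ gives $\forall(\forall x\ra \exists y)=\forall x\ra \exists y$, which (again by Proposition \ref{mpsBE-20}$(4)$) is a fixed element, so $\exists(\forall x\ra \exists y)=\forall x\ra \exists y$. Now $\forall x\le x$ and (A) give $x\ra \exists y\le \forall x\ra \exists y$, and applying the isotone $\exists$ yields $\exists(x\ra \exists y)\le \exists(\forall x\ra \exists y)=\forall x\ra \exists y=\forall(\forall x\ra \exists y)$, as required.

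The only real care needed is bookkeeping: each step depends on choosing the correct monotonicity axiom --- (A) when the active quantifier occupies the antecedent and (M) when it occupies the consequent --- and on having the inequality pointing the right way so that the final collapsing identity from Proposition \ref{mpsBE-20} applies. The main obstacle, such as it is, is recognizing in parts $(2)$ and $(3)$ that the target right-hand sides $\forall x\ra \exists y$ and $\exists x\ra \exists y$ are fixed points of both quantifiers, since this is exactly what lets me replace the inner implication by its normalized form before discharging the outer quantifier.
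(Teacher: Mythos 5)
Your proposal is correct: every step checks out against the paper's toolkit, and the translation between the mixed-arrow identities $a\rs b=1$, $a\ra b=1$ and the inequality $a\le b$ is legitimate in a pseudo BCK-algebra, where $\le$ is a genuine partial order. Your route, however, is packaged differently from the paper's. The paper never invokes isotonicity of the quantifiers in this proof; instead it works entirely internally: it establishes the same seed inequality you do (e.g.\ $\forall(x\ra y)\le \forall x\ra y$ from $(M_2)$ and Proposition \ref{psBE-20}$(2)$), rewrites it as an identity equal to $1$, applies $\forall$, and then repeatedly inserts $\exists\forall$ via $(M_5)$ so that $(M_4)$ (or $(M_3)$) can distribute $\forall$ across the outer implication, arriving at the stated equation by a chain of equalities. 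You externalize exactly these manipulations: you apply the isotone quantifier (Corollary \ref{mpsBE-25-10}$(2)$, available because a pseudo BCK-algebra has property (T)) to the seed inequality and then collapse with the previously established identities --- $(M_3)$ for $(4)$, Proposition \ref{mpsBE-20}$(9)$ for $(1)$, and the fixed-point facts \ref{mpsBE-20}$(7)$, $(10)$ together with \ref{mpsBE-20}$(4)$ for $(3)$ and $(2)$. Since Proposition \ref{mpsBE-25} and the items of Proposition \ref{mpsBE-20} you cite are themselves proved by the same $(M_3)$--$(M_5)$ computations, your argument is essentially a modularized version of the paper's: yours is shorter and makes the structural content (isotone quantifiers plus fixed elements of the form $\forall x\ra\forall y$, $\exists x\ra\exists y$, $\forall x\ra\exists y$) explicit, while the paper's inline equational style re-derives these facts on the spot and keeps the proof self-contained at the level of the axioms. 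Your observation that the right-hand sides in $(2)$ and $(3)$ are fixed under both quantifiers is precisely what the paper achieves implicitly by the $(M_5)$ insertions.
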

\begin{proof}
$(1)$ By $(M_2)$ and Proposition \ref{psBE-20}$(2)$ we have $\forall(x\ra y)\le x\ra y\le \forall x\ra y$, that is 
$\forall(x\ra y)\rs (\forall x\ra y)=1$. Using $(M_5)$ and $(M_4)$ we get \\
$\hspace*{2cm}$ $1=\forall 1=\forall(\forall(x\ra y)\rs (\forall x\ra y))=
                   \forall(\exists\forall(x\ra y)\rs (\forall x\ra y))$ \\
$\hspace*{2.3cm}$ $=\exists\forall(x\ra y)\rs \forall(\forall x\ra y)=
                \forall(x\ra y)\rs \forall(\exists\forall x\ra y)$ \\
$\hspace*{2.3cm}$ $=\forall(x\ra y)\rs (\exists\forall x\ra \forall y)=\forall(x\ra y)\rs (\forall x\ra \forall y)$. \\
Similarly, $\forall(x\rs y)\ra (\forall x\rs \forall y)=1$. \\
$(2)$ Using $(M_2)$ and Proposition \ref{psBE-20}$(2)$ we get 
$x\ra \exists y\le \forall x\ra \exists y$, hence 
$(x\ra \exists y)\rs (\forall x\ra \exists y)=1$. Applying $(M_5)$ and $(M_3)$, it follows that: \\
$\hspace*{2cm}$  $1=\forall 1=\forall((x\ra \exists y)\rs (\forall x\ra \exists y))
                   =\forall((x\ra \exists y)\rs (\exists\forall x\ra \exists y))$ \\
$\hspace*{2.3cm}$ $=\forall((x\ra \exists y)\rs \forall(\forall x\ra \exists y)) 
                   =\forall((x\ra \exists y)\rs \exists\forall(\forall x\ra \exists y))$ \\
$\hspace*{2.3cm}$ $=\exists(x\ra \exists y)\rs \exists\forall(\forall x\ra \exists y)  
                   =\exists(x\ra \exists y)\rs \forall(\forall x\ra \exists y)$. \\
Similarly, $\exists(x\rs \exists y)\ra \forall(\forall x\rs \exists y)=1$. \\
$(3)$ Using $(M_1)$ and Proposition \ref{psBE-20}$(3)$ we get $\exists x\ra y\le \exists x\ra \exists y$, 
hence $(\exists x\ra y)\rs (\exists x\ra \exists y)=1$. 
It follows that $\forall((\exists x\ra y)\rs (\exists x\ra \exists y))=1$, so by $(M_3)$ we have 
$\forall((\exists x\ra y)\rs \forall(x\ra \exists y))=1$. 
Applying $(M_5)$ and $(M_3)$, we get: \\
$\hspace*{2cm}$   $1=\forall((\exists x\ra y)\rs \forall(x\ra \exists y))
                    =\forall((\exists x\ra y)\rs \exists\forall(x\ra \exists y))$ \\
$\hspace*{2.3cm}$ $=\exists(\exists x\ra y)\rs \exists\forall(x\ra \exists y)
                   =\exists(\exists x\ra y)\rs \forall(x\ra \exists y)$ \\
$\hspace*{2.3cm}$ $=\exists(\exists x\ra y)\rs (\exists x\ra \exists y)$. \\
Similarly, $\exists(\exists x\rs y)\ra (\exists x\rs \exists y)=1$. \\
$(4)$ Using $(M_2)$, $(M_1)$ and Proposition \ref{psBE-20}$(3)$ we have 
$\forall(x\ra y)\le x\ra y\le x\ra \exists y$, so $\forall(x\ra y)\rs (x\ra \exists y)=1$. 
By $(M_5)$, $(M_4)$,$(M_3)$, it follows that: \\
$\hspace*{2cm}$   $1=\forall 1=\forall(\forall(x\ra y)\rs (x\ra \exists y))
                    =\forall(\exists\forall(x\ra y)\rs (x\ra \exists y))$ \\
$\hspace*{2.3cm}$ $=\exists\forall(x\ra y)\rs \forall(x\ra \exists y)                  
                   =\forall(x\ra y)\rs \forall(x\ra \exists y)$ \\
$\hspace*{2.3cm}$ $=\forall(x\ra y)\rs (\exists x\ra \exists y)$. \\
Similarly, $\forall(x\rs y)\ra (\exists x\rs \exists y)=1$.  
\end{proof}

\begin{proposition} \label{mpsBCK-15} In any monadic pseudo BCK-algebra $A$ the following hold, for all $x, y\in A:$ \\
$(1)$ $\exists(\exists x\ra \exists y)=\exists x\ra \exists y$, 
      $\exists(\exists x\rs \exists y)=\exists x\rs \exists y;$ \\
$(2)$ $\exists(\forall x\ra \forall y)=\forall x\ra \forall y$,
      $\exists(\forall x\rs \forall y)=\forall x\rs \forall y;$ \\
$(3)$ $\forall(\exists x\ra \exists y)=\exists x\ra \exists y$, 
      $\forall(\exists x\rs \exists y)=\exists x\rs \exists y;$ \\
$(4)$ $\forall(\forall x\ra \forall y)=\forall x\ra \forall y$,
      $\forall(\forall x\rs \forall y)=\forall x\rs \forall y$.  
\end{proposition}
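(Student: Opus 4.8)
The plan is to deduce all four identities from results already established for general monadic pseudo BE-algebras in Proposition \ref{mpsBE-20}, so that almost nothing new is needed beyond organizing the bookkeeping. I first observe that statements $(3)$ and $(4)$ require no work at all: $(3)$ is verbatim Proposition \ref{mpsBE-20}$(7)$ (equivalently $(13)$), and $(4)$ is verbatim Proposition \ref{mpsBE-20}$(12)$. Both hold in any monadic pseudo BE-algebra, hence in particular in the present pseudo BCK-algebra, so I would simply cite them for both the $\ra$- and the $\rs$-versions.

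For $(1)$ and $(2)$ the key tool is the fixed-point equivalence of Proposition \ref{mpsBE-20}$(4)$, namely that for every $w\in A$ one has $\forall w=w$ if and only if $\exists w=w$. To prove $(1)$ I set $w=\exists x\ra \exists y$; Proposition \ref{mpsBE-20}$(7)$ gives $\forall w=w$, so the equivalence yields $\exists w=w$, that is $\exists(\exists x\ra \exists y)=\exists x\ra \exists y$, and the $\rs$-version follows identically from the second half of $(7)$. Likewise, for $(2)$ I set $w=\forall x\ra \forall y$; Proposition \ref{mpsBE-20}$(12)$ shows $\forall w=w$, whence $\exists w=w$, i.e. $\exists(\forall x\ra \forall y)=\forall x\ra \forall y$, with the $\rs$-version handled the same way. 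If one prefers to avoid the fixed-point equivalence for $(1)$, an equivalent direct argument is available precisely because $\le$ is antisymmetric in a pseudo BCK-algebra (axiom $(psBCK_6)$): Proposition \ref{mpsBE-20}$(14)$ gives $\exists(\exists x\ra \exists y)\le \exists x\ra \exists y$, while applying axiom $(M_1)$ to the element $\exists x\ra \exists y$ gives the reverse inequality $\exists x\ra \exists y\le \exists(\exists x\ra \exists y)$, and antisymmetry produces the equality.

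I do not anticipate any genuine obstacle, since the proposition is essentially a repackaging of earlier facts specialized to the pseudo BCK-algebra context. The only point demanding attention is to recognize that $(1)$ and $(2)$ are not independent calculations but mechanical consequences of $(3)$, $(4)$ together with the equivalence $\forall w=w\Leftrightarrow\exists w=w$; once this is seen, each of the eight equalities reduces to a one-line citation.
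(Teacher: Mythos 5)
Your proposal is correct, but it reverses the paper's chain of deductions, and the reversal buys something real. The paper proves $(1)$ first, by the order-theoretic squeeze you mention only as a fallback: Proposition \ref{mpsBE-20}$(14)$ gives $\exists(\exists x\ra \exists y)\le \exists x\ra \exists y$, axiom $(M_1)$ gives the reverse inequality, and antisymmetry of $\le$ (available via $(psBCK_6)$) yields equality; it then obtains $(2)$ from $(1)$ by substituting $\exists\forall x=\forall x$ using $(M_5)$, and finally deduces $(3)$ and $(4)$ from $(1)$ and $(2)$ via the fixed-point equivalence of Proposition \ref{mpsBE-20}$(4)$. You go the opposite way: you observe that $(3)$ and $(4)$ are verbatim Proposition \ref{mpsBE-20}$(7)$/$(13)$ and $(12)$, already proved there for arbitrary monadic pseudo BE-algebras, and since \ref{mpsBE-20}$(4)$ is an equivalence it transports $\forall$-fixedness of $w=\exists x\ra\exists y$ (resp. $w=\forall x\ra\forall y$) into $\exists$-fixedness, giving $(1)$ and $(2)$ in one line each. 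Because every step in your primary route is purely equational, it never invokes antisymmetry of $\le$, so you have in fact established the proposition for every monadic pseudo BE-algebra, not merely for pseudo BCK-algebras --- the BCK hypothesis enters the paper's argument only through the squeeze in $(1)$, which your route renders unnecessary. (Your fallback argument for $(1)$ is, word for word, the paper's own proof of $(1)$, and there antisymmetry is genuinely needed: Example \ref{mpsBE-10-20} exhibits a pseudo BE-algebra with $a\ra b=1$ and $b\ra a=1$ but $a\ne b$, so the two inequalities alone would not force equality in the general setting.)
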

\begin{proof}
$(1)$ By Proposition \ref{mpsBE-20}$(14)$ we have $\exists(\exists x\ra \exists y)\le \exists x\ra \exists y$, 
and using $(M_1)$, $\exists x\ra \exists y \le \exists(\exists x\ra \exists y)$. 
Hence $\exists(\exists x\ra \exists y)=\exists x\ra \exists y$. Similarly, 
$\exists(\exists x\rs \exists y)=\exists x\rs \exists y$. \\ 
$(2)$ Applying $(M_5)$ and $(1)$ we get  
$\exists(\forall x\ra \forall y)=\exists(\exists\forall x\ra \exists\forall y) 
                 =\exists\exists\forall x\ra \exists\exists\forall y=\forall x\ra \forall y$. 
Similarly, $\exists(\forall x\rs \forall y)=\forall x\rs \forall y$. \\
$(3)$ It follows by $(1)$ and Proposition \ref{mpsBE-20}$(4)$. \\
$(4)$ It follows by $(2)$ and Proposition \ref{mpsBE-20}$(4)$. 
\end{proof}

Given the universal quantifiers $\forall_1$, $\forall_2$ on a pseudo BE-algebra $A$, we define 
$\forall_1\le \forall_2$ by $\forall_1 x\le \forall_2 x$ and 
$(\forall_1\circ \forall_2)(x)=\forall_1(\forall_2 x)$, for all $x\in A$. 
Similarly for the case of existential quantifiers. We will use the notations $\forall_1\forall_2$ and 
$\exists_1\exists_2$ instead of $\forall_1\circ \forall_2$ and $\exists_1\circ \exists_2$, respectively. 
 
\begin{theorem} \label{mpsBCK-20} Let $A$ be a pseudo BCK-algebra and let 
$(\exists_1,\forall_1), (\exists_2,\forall_2)\in \mathcal{MOP}(A)$. Then the following hold: \\
$(1)$ $\forall_1\le \forall_2$ iff $\forall_1\forall_2=\forall_1;$ \\ 
$(2)$ $\exists_1\ge \exists_2$ iff $\exists_1\exists_2=\exists_1;$ \\
$(3)$ $A_{\exists_1\forall_1}=A_{\exists_2\forall_2}$ implies $\forall_1=\forall_2;$ \\
$(4)$ $\Img(\forall_1)=\Img(\forall_2)$ implies $\forall_1=\forall_2$.
\end{theorem}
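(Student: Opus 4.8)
The plan is to lean on four facts that hold in any monadic pseudo BCK-algebra: the quantifiers are isotone (Corollary \ref{mpsBE-25-10}$(2)$); they are idempotent, $\forall\forall=\forall$ and $\exists\exists=\exists$ (Proposition \ref{mpsBE-20}$(5)$,$(6)$); they satisfy $\forall x\le x\le \exists x$ (axioms $(M_2)$ and $(M_1)$); and ``$\le$" is a genuine partial order, hence antisymmetric (by $(psBCK_6)$). Every equality of elements below is obtained by establishing two inequalities and invoking antisymmetry.

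For $(1)$, assume first $\forall_1\le \forall_2$. From $\forall_2 x\le x$ and the isotonicity of $\forall_1$ we get $\forall_1\forall_2 x\le \forall_1 x$, while from $\forall_1 x\le \forall_2 x$ together with idempotency and isotonicity of $\forall_1$ we get $\forall_1 x=\forall_1\forall_1 x\le \forall_1\forall_2 x$; antisymmetry yields $\forall_1\forall_2=\forall_1$. Conversely, if $\forall_1\forall_2=\forall_1$, then applying $(M_2)$ to the element $\forall_2 x$ gives $\forall_1 x=\forall_1\forall_2 x\le \forall_2 x$, that is $\forall_1\le \forall_2$. Part $(2)$ is the exact dual: assuming $\exists_1\ge \exists_2$, the chain $\exists_1 x=\exists_1\exists_1 x\le \exists_1\exists_2 x\le \exists_1 x$ (using $x\le \exists_2 x$, then $\exists_2 x\le \exists_1 x$, with isotonicity and idempotency) forces $\exists_1\exists_2=\exists_1$, and the converse follows from $\exists_2 x\le \exists_1\exists_2 x=\exists_1 x$ via $(M_1)$.

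For $(3)$, put $F:=A_{\exists_1\forall_1}=A_{\exists_2\forall_2}$. By Proposition \ref{mpsBE-90}$(1)$ we have $\Img(\forall_i)=A_{\exists_i\forall_i}=F$, so $\forall_1 x\in F$ means $\forall_2(\forall_1 x)=\forall_1 x$, and symmetrically $\forall_1(\forall_2 x)=\forall_2 x$. Combining $\forall_2(\forall_1 x)=\forall_1 x$ with $\forall_1 x\le x$ and the isotonicity of $\forall_2$ gives $\forall_1 x=\forall_2(\forall_1 x)\le \forall_2 x$, and the symmetric computation gives $\forall_2 x\le \forall_1 x$; antisymmetry then yields $\forall_1=\forall_2$. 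Part $(4)$ is immediate, since by Proposition \ref{mpsBE-90}$(1)$ the equality $\Img(\forall_1)=\Img(\forall_2)$ is literally the hypothesis $A_{\exists_1\forall_1}=A_{\exists_2\forall_2}$ of $(3)$.

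The only delicate aspect is keeping straight the direction of each inequality in the ``sandwich'' arguments; the underlying mechanism is uniform, namely that membership in the common fixed-point set lets each quantifier act as the identity on the output of the other, after which isotonicity closes the remaining gap. No transitivity hypothesis beyond what pseudo BCK-algebras already supply is needed, since these are automatically pseudo BE(T)-algebras.
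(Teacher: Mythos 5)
Your proof is correct and follows essentially the same route as the paper's: parts $(1)$ and $(2)$ rest on isotonicity (Corollary \ref{mpsBE-25-10}), idempotency (Proposition \ref{mpsBE-20}$(5)$,$(6)$), the inequalities $\forall x\le x\le \exists x$, and antisymmetry of ``$\le$", while parts $(3)$ and $(4)$ use $\Img(\forall_i)=A_{\exists_i\forall_i}$ from Proposition \ref{mpsBE-90}$(1)$ so that each quantifier fixes the output of the other, exactly as in the paper. The only difference is cosmetic: in $(1)$ you derive $\forall_1\forall_2 x\le \forall_1 x$ directly from $\forall_2 x\le x$, where the paper detours through $\forall_1\forall_2 x\le \forall_2\forall_2 x=\forall_2 x\le x$ before applying $\forall_1$.
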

\begin{proof}
$(1)$ Assume that $\forall_1 \le \forall_2$, that is $\forall_1 x\le \forall_2 x$, for all $x\in A$. Then we have 
$\forall_1 x=\forall_1\forall_1 x\le \forall_1\forall_2 x$, so $\forall_1\le \forall_1\forall_2$. 
Moreover, 
$\forall_1\forall_2 x\le \forall_2\forall_2 x=\forall_2 x\le x$, hence 
$\forall_1\forall_1\forall_2 x\le \forall_1 x$, so $\forall_1\forall_2 x\le \forall_1 x$, that is 
$\forall_1\forall_2\le \forall_1$. Hence, $\forall_1\forall_2=\forall_1$. 
Conversely, if $\forall_1\forall_2=\forall_1$, we have $\forall_1 x=\forall_1\forall_2 x\le \forall_2 x$, 
for all $x\in A$, hence $\forall_1\le \forall_2$. \\ 
$(2)$ If $\exists_1\ge \exists_2$, then $\exists_1 x\ge \exists_2 x$, fot all $x\in A$. 
Hence, $\exists_1 x=\exists_1\exists_1 x\ge \exists_1\exists_2 x$, so $\exists_1\ge \exists_1\exists_2$. 
On the other hand, $\exists_1\exists_2 x\ge \exists_2\exists_2 x=\exists_2 x\ge x$, thus 
$\exists_1\exists_1\exists_2 x\ge \exists_1 x$, so that $\exists_1\exists_2 x\ge \exists_1 x$, for all $x\in A$, 
that is $\exists_1\exists_2 \ge \exists_1$. It follows that $\exists_1\exists_2 =\exists_1$. 
Conversely, if $\exists_1\exists_2 \ge \exists_1$, then $\exists_1x=\exists_1\exists_2 x\ge \exists_2 x$, for all 
$x\in A$, hence $\exists_1\ge \exists_2$. \\ 
$(3)$ From $\forall_1\forall_1 x=\forall_1 x$ we get $\forall_1 x\in A_{\exists_1\forall_1}=A_{\exists_2\forall_2}$, 
hence $\forall_2\forall_1 x=\forall_1 x$, for all $x\in A$. Thus $\forall_2\forall_1=\forall_1$, and similarly 
$\forall_1\forall_2=\forall_2$. 
Since $\forall_1 x\le x$, $\forall_2 x\le x$, for all $x\in A$, we have 
$\forall_1 x=\forall_2\forall_1 x\le \forall_2 x$. 
Similarly, $\forall_2 x=\forall_1\forall_2 x\le \forall_1 x$. 
It follows that $\forall_1=\forall_2$. \\
$(4)$ It follows by $(3)$ and Proposition \ref{mpsBE-90}$(1)$. 
\end{proof}

\begin{lemma} \label{mpsBCK-30} Let $A$ be a pseudo BCK-algebra and let 
$(\exists_1,\forall_1), (\exists_2,\forall_2)\in \mathcal{MOP}(A)$. Then the following are equivalent: \\
$(a)$ $\exists_1\exists_2=\exists_2\exists_1$ and $\forall_1\forall_2=\forall_2\forall_1;$ \\
$(b)$ $\exists_1\exists_2, \exists_2\exists_1$ satisfy $(M_1)$ and  
      $\forall_1\forall_2, \forall_2\forall_1$ satisfy $(M_2);$ \\
$(c)$ $\exists_1\exists_2\exists_1\exists_2=\exists_1\exists_2$,      
      $\exists_2\exists_1\exists_2\exists_1=\exists_2\exists_1$,  
      $\forall_1\forall_2\forall_1\forall_2=\forall_1\forall_2$ and 
      $\forall_2\forall_1\forall_2\forall_1=\forall_2\forall_1$. 
\end{lemma}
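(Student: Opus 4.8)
The plan is to establish the cycle $(a)\Rightarrow(b)\Rightarrow(c)\Rightarrow(a)$, exploiting that on a pseudo BCK-algebra each $\exists_i$ is a closure operator and each $\forall_i$ an interior operator (Remark \ref{mpsBE-20-20}), both isotone (Corollary \ref{mpsBE-25-10}$(2)$), and that $\Img(\exists_i)=\Img(\forall_i)=A_{\exists_i\forall_i}$ (Propositions \ref{mpsBE-80}$(1)$ and \ref{mpsBE-90}$(1)$). Because a composite of inflationary, isotone self-maps is again inflationary and isotone (and dually for deflationary maps), the maps $\exists_1\exists_2,\exists_2\exists_1$ are automatically inflationary and isotone, and $\forall_1\forall_2,\forall_2\forall_1$ automatically deflationary and isotone; so the genuine content of $(b)$ and $(c)$ lies in idempotency and in the interplay of the two orders of composition.

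Before the implications I would record one structural identity. Writing $B_i:=A_{\exists_i\forall_i}$, I claim $\Fix(\exists_1\exists_2)=\Fix(\exists_2\exists_1)=B_1\cap B_2=\Fix(\forall_1\forall_2)=\Fix(\forall_2\forall_1)$. For instance, if $\exists_1\exists_2 x=x$, then $x\le \exists_2 x\le \exists_1\exists_2 x=x$ forces $\exists_2 x=x$ and hence $\exists_1 x=\exists_1\exists_2 x=x$, so $x\in B_1\cap B_2$; the reverse inclusion is clear since elements of $B_i$ are $\exists_i$-fixed. The remaining equalities follow identically (dually for the $\forall$-composites). Crucially this identity uses only idempotency of the individual operators, not of the composites.

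For $(a)\Rightarrow(b)$, assuming $\exists_1\exists_2=\exists_2\exists_1$ and $\forall_1\forall_2=\forall_2\forall_1$, idempotency of the composites is a one-line rearrangement, $\exists_1\exists_2\exists_1\exists_2=\exists_1\exists_1\exists_2\exists_2=\exists_1\exists_2$ by Proposition \ref{mpsBE-20}$(5)$, and dually for the $\forall$-composites using $(6)$. Together with the automatic inflationarity/deflationarity, isotonicity, and the fact that their common image is the subalgebra $B_1\cap B_2$, this lets one verify that $\exists_1\exists_2,\exists_2\exists_1$ are existential quantifiers (satisfy $(M_1)$) and $\forall_1\forall_2,\forall_2\forall_1$ universal quantifiers (satisfy $(M_2)$), the compatibility axioms $(M_3)$–$(M_5)$ passing to the composites routinely from those of the factors and commutativity. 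The implication $(b)\Rightarrow(c)$ is then immediate, since any existential (resp. universal) quantifier is idempotent by Proposition \ref{mpsBE-20}$(5)$ (resp. $(6)$), which is exactly the four equalities of $(c)$.

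The heart of the argument is $(c)\Rightarrow(a)$. Assuming all four composites are idempotent, $\exists_1\exists_2$ is now a genuine closure operator whose fixed-point set is $B_1\cap B_2$ by the structural identity, and the same holds for $\exists_2\exists_1$. A closure operator is determined by its fixed points through $c(x)=\min\{b\in\Fix(c):x\le b\}$: indeed $\exists_1\exists_2 x$ lies in $B_1\cap B_2$ by idempotency, dominates $x$ by inflationarity, and satisfies $\exists_1\exists_2 x\le \exists_1\exists_2 b=b$ for every $b\in B_1\cap B_2$ with $b\ge x$, so it is the least such $b$. The identical description applies to $\exists_2\exists_1 x$, whence $\exists_1\exists_2=\exists_2\exists_1$; dually $\forall_1\forall_2 x$ and $\forall_2\forall_1 x$ are both the greatest element of $B_1\cap B_2$ below $x$, giving $\forall_1\forall_2=\forall_2\forall_1$ and hence $(a)$. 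The main obstacle is precisely this step: one must notice that idempotency itself supplies the required least (resp. greatest) fixed point—it is the value of the operator—so that no completeness of $A$ is needed, and the two composites, sharing the fixed-point subalgebra $B_1\cap B_2$, are forced to coincide.
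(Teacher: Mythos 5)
Your preliminary fixed-point identity $\Fix(\exists_1\exists_2)=A_{\exists_1\forall_1}\cap A_{\exists_2\forall_2}$ is correct (it needs only inflationarity, antisymmetry, and idempotency of the individual quantifiers), your $(a)\Rightarrow(b)$ is fine (indeed $(b)$ needs none of the extra structure you attach to it: a composite of $(M_1)$-maps satisfies $(M_1)$ outright), and your $(c)\Rightarrow(a)$ is correct and genuinely different from the paper's. The paper argues by an inequality chase: from idempotency in $(c)$, together with isotonicity and $(M_1)$, one gets $\exists_1\exists_2 x=\exists_1\exists_2\exists_1\exists_2 x\ge \exists_2\exists_1\exists_2 x\ge \exists_2\exists_1 x$, symmetrically the reverse inequality, and then antisymmetry; your description of both $\exists_1\exists_2 x$ and $\exists_2\exists_1 x$ as the least element of $A_{\exists_1\forall_1}\cap A_{\exists_2\forall_2}$ above $x$ reaches the same conclusion and is arguably more transparent.

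The genuine gap is $(b)\Rightarrow(c)$. You dispatch it by citing Proposition \ref{mpsBE-20}$(5)$,$(6)$: ``any existential quantifier is idempotent.'' But $(b)$ does not say that the composites are existential or universal quantifiers of a monadic pair; it says only that they satisfy $(M_1)$ (resp.\ $(M_2)$), i.e.\ are inflationary (resp.\ deflationary). Proposition \ref{mpsBE-20}$(5)$,$(6)$ is proved for a pair $(\exists,\forall)$ satisfying all of $(M_1)$--$(M_5)$ --- its proof passes through $\forall\exists x=\exists x$, which uses $(M_3)$ and $(M_5)$ --- and an inflationary isotone map need not be idempotent, so the citation does not apply to $\exists_1\exists_2$. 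Your own opening remark makes the failure vivid: you correctly observe that $(M_1)$/$(M_2)$ for the composites holds automatically, so under your reading $(b)$ is vacuous and your cycle would prove that \emph{any} two monadic operators commute. That is false: on the Boolean algebra of subsets of $\{1,2,3\}$ (a bounded commutative BCK-algebra, with the Halmos quantifiers induced by the partitions $\{\{1,2\},\{3\}\}$ and $\{\{1\},\{2,3\}\}$, each of which yields a monadic pair satisfying $(M_1)$--$(M_5)$) one computes $\exists_1\exists_2\{1\}=\{1,2\}$ but $\exists_2\exists_1\{1\}=\{1,2,3\}$, and $\exists_1\exists_2$ is not idempotent. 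So $(b)\Rightarrow(c)$ cannot be obtained by appealing to idempotency of quantifiers; the paper instead handles this implication by re-running the order-theoretic argument of Theorem \ref{mpsBCK-20}$(1)$,$(2)$ directly on the composite maps, remarking that only the $(M_1)$/$(M_2)$-type properties of the factors enter there. To close your cycle you would need an argument at that level (or a stronger reading of $(b)$, e.g.\ that the paired composites $(\exists_1\exists_2,\forall_1\forall_2)$, $(\exists_2\exists_1,\forall_2\forall_1)$ are themselves monadic operators, as in Theorem \ref{mpsBCK-40}); as written, this step, and with it the proposal, does not go through.
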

\begin{proof}
$(a)\Rightarrow (b)$ For any $x\in A$ we have: $\exists_1\exists_2 x\ge \exists_2 x\ge x$, 
$\exists_2\exists_1 x\ge \exists_1 x\ge x$, $\forall_1\forall_2 x\le \forall_1 x\le x$ and 
$\forall_2\forall_1 x\le \forall_2 x\le x$, hence $\exists_1\exists_2, \exists_2\exists_1$ satisfy $(M_1)$, while   
      $\forall_1\forall_2, \forall_2\forall_1$ satisfy $(M_2)$. \\
$(b)\Rightarrow (c)$ First of all, we can see that, in the proof of Theorem \ref{mpsBCK-20}$(1)$,$(2)$ it is 
sufficient that $\forall_1$, $\forall_2$ satisfy $(M_2)$ and $\exists_1$, $\exists_2$ satisfy $(M_1)$. \\
Since $\exists_1\exists_2\exists_1\exists_2\ge \exists_1\exists_2$ and 
and $\exists_2\exists_1\exists_2\exists_1\ge \exists_2\exists_1$, then by Theorem \ref{mpsBCK-20}$(2)$ we get 
$\exists_1\exists_2\exists_1\exists_2=\exists_1\exists_2$ and $\exists_2\exists_1\exists_2\exists_1=\exists_2\exists_1$. 
Similarly, from $\forall_1\forall_2\forall_1\forall_2\le \forall_1\forall_2$ and 
$\forall_2\forall_1\forall_2\forall_1\le \forall_2\forall_1$, by Theorem \ref{mpsBCK-20}$(1)$ we have 
$\forall_1\forall_2\forall_1\forall_2=\forall_1\forall_2$ and 
$\forall_2\forall_1\forall_2\forall_1=\forall_2\forall_1$. \\
$(c)\Rightarrow (a)$ Applying $(b)$, for any $x\in A$ we have: \\
$\hspace*{2cm}$ $\exists_1\exists_2 x=\exists_1\exists_2\exists_1\exists_2 x\ge \exists_2\exists_1\exists_2 x \ge 
                 \exists_2\exists_1 x$ and \\                    
$\hspace*{2cm}$ $\exists_2\exists_1 x=\exists_2\exists_1\exists_2\exists_1 x\ge \exists_1\exists_2\exists_1 x \ge 
                 \exists_1\exists_2 x$. \\
$\hspace*{2cm}$ $\forall_1\forall_2 x=\forall_1\forall_2\forall_1\forall_2 x\le \forall_2\forall_1\forall_2 x\le  
                 \forall_2\forall_1 x$ and \\
$\hspace*{2cm}$ $\forall_2\forall_1 x=\forall_2\forall_1\forall_2\forall_1 x\le \forall_1\forall_2\forall_1 x\le  
                 \forall_1\forall_2 x$. \\                
Hence $\exists_1\exists_2=\exists_2\exists_1$ and $\forall_1\forall_2=\forall_2\forall_1$. 
\end{proof}

\begin{theorem} \label{mpsBCK-40} Let $A$ be a pseudo BCK-algebra and let 
$(\exists_1,\forall_1), (\exists_2,\forall_2)\in \mathcal{MOP}(A)$. Then 
$(\exists_1\exists_2,\forall_1\forall_2),(\exists_2\exists_1,\forall_2\forall_1)\in \mathcal{MOP}(A)$ 
if and only if $\exists_1\exists_2=\exists_2\exists_1$ and $\forall_1\forall_2=\forall_2\forall_1$. 
\end{theorem}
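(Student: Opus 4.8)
The plan is to read this theorem as a bridge between Lemma \ref{mpsBCK-30} and the bare definition of a monadic operator, so that the two directions are handled by entirely different mechanisms.

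For the forward implication I would suppose that both $(\exists_1\exists_2,\forall_1\forall_2)$ and $(\exists_2\exists_1,\forall_2\forall_1)$ lie in $\mathcal{MOP}(A)$. Being the existential components of monadic operators, $\exists_1\exists_2$ and $\exists_2\exists_1$ each satisfy $(M_1)$; being the universal components, $\forall_1\forall_2$ and $\forall_2\forall_1$ each satisfy $(M_2)$. This is precisely condition $(b)$ of Lemma \ref{mpsBCK-30}, which is equivalent to condition $(a)$, namely $\exists_1\exists_2=\exists_2\exists_1$ and $\forall_1\forall_2=\forall_2\forall_1$. So this direction costs nothing beyond quoting the lemma.

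For the converse, assume $\exists_1\exists_2=\exists_2\exists_1$ and $\forall_1\forall_2=\forall_2\forall_1$, and abbreviate $\exists:=\exists_1\exists_2$, $\forall:=\forall_1\forall_2$. Under these hypotheses the two candidate pairs are literally equal, so it suffices to verify that $(\exists,\forall)$ satisfies $(M_1)$--$(M_5)$; both memberships then follow at once. Axioms $(M_1)$ and $(M_2)$ are immediate from the chains $x\le \exists_2 x\le \exists_1\exists_2 x$ and $\forall_1\forall_2 x\le \forall_2 x\le x$, using $(M_1)$, $(M_2)$ for the individual pairs together with transitivity of $\le$ (valid in any pseudo BCK-algebra). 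For $(M_5)$ I would observe that $\forall x=\forall_1(\forall_2 x)$ is a value of $\forall_1$, hence fixed by $\exists_1$, and that $\forall x=\forall_2(\forall_1 x)$ (by commutativity) is a value of $\forall_2$, hence fixed by $\exists_2$; therefore $\exists\forall x=\exists_1\exists_2\forall x=\exists_1\forall x=\forall x$.

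The real work is $(M_3)$ and $(M_4)$, and the idea is to peel off one quantifier at a time, rewriting $\exists$ via commutativity so that each application matches the corresponding single-index axiom. For $(M_3)$, writing $\exists y=\exists_2(\exists_1 y)$ and applying $(M_3)$ for the second pair gives $\forall_2(x\ra\exists y)=\exists_2 x\ra\exists y$; then writing $\exists y=\exists_1(\exists_2 y)$ and applying $(M_3)$ for the first pair gives $\forall_1(\exists_2 x\ra\exists y)=\exists_1\exists_2 x\ra\exists y=\exists x\ra\exists y$, and composing $\forall=\forall_1\forall_2$ yields $\forall(x\ra\exists y)=\exists x\ra\exists y$. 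For $(M_4)$ the symmetric computation uses $\exists x=\exists_2(\exists_1 x)$ with $(M_4)$ for the second pair to get $\forall_2(\exists x\ra y)=\exists x\ra\forall_2 y$, then $\exists x=\exists_1(\exists_2 x)$ with $(M_4)$ for the first pair to get $\forall_1(\exists x\ra\forall_2 y)=\exists x\ra\forall_1\forall_2 y=\exists x\ra\forall y$. The $\rs$-versions are identical. The main obstacle is precisely the bookkeeping here: one must rewrite the same composite $\exists$ in its two factorizations $\exists_2\exists_1$ and $\exists_1\exists_2$, each time choosing the one whose outermost quantifier agrees with the single-index axiom being invoked, and this is exactly the place where the commutativity hypothesis is consumed.
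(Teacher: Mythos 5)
Your proposal is correct and follows essentially the same route as the paper: the forward direction by reducing to condition $(b)$ of Lemma \ref{mpsBCK-30}, and the converse by directly verifying $(M_1)$--$(M_5)$, with $(M_3)$ and $(M_4)$ established exactly as in the paper by rewriting the composite existential quantifier alternately as $\exists_2\exists_1$ and $\exists_1\exists_2$ so that the outermost factor matches the single-index axiom. Your one small refinement --- noting that under the commutativity hypothesis the two candidate pairs coincide, so a single verification suffices --- is a harmless streamlining of the paper's closing ``similarly.''
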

\begin{proof}
Let $(\exists_1,\forall_1), (\exists_2,\forall_2)\in \mathcal{MOP}(A)$ such that 
$(\exists_1\exists_2,\forall_1\forall_2),(\exists_2\exists_1,\forall_2\forall_1)\in \mathcal{MOP}(A)$. 
Applying Lemma \ref{mpsBCK-30}, it follows that 
$\exists_1\exists_2=\exists_2\exists_1$ and $\forall_1\forall_2=\forall_2\forall_1$. 
Conversely, assume that $\exists_1\exists_2=\exists_2\exists_1$ and $\forall_1\forall_2=\forall_2\forall_1$. 
Since $x\le \exists_1 x\le \exists_1\exists_2 x$, then $\exists_1\exists_2$ satisfies $(M_1)$. 
From $\forall_1\forall_2 x\le \forall_2 x\le x$, it follows that $\forall_1\forall_2$ verifies $(M_2)$. 
For any $x, y\in A$, applying axioms $(M_3)$-$(M_5)$ for $(\exists_1,\forall_1)$ and $(\exists_2,\forall_2)$ 
we get: \\
$\hspace*{2cm}$ $\forall_1\forall_2(x\ra \exists_1\exists_2 y)=\forall_1\forall_2(x\ra \exists_2\exists_1 y) 
                 =\forall_1(\exists_2 x\ra \exists_2\exists_1 y)$ \\
$\hspace*{5cm}$ $=\forall_1(\exists_2 x\ra \exists_1\exists_2 y)  
                 =\exists_1\exists_2 x\ra \exists_1\exists_2 y$. \\
$\hspace*{2cm}$ $\forall_1\forall_2(x\rs \exists_1\exists_2 y)=\forall_1\forall_2(x\rs \exists_2\exists_1 y) 
                 =\forall_1(\exists_2 x\rs \exists_2\exists_1 y)$ \\
$\hspace*{5cm}$ $=\forall_1(\exists_2 x\rs \exists_1\exists_2 y)  
                 =\exists_1\exists_2 x\rs \exists_1\exists_2 y$. \\
$\hspace*{2cm}$ $\forall_1\forall_2(\exists_1\exists_2 x\ra y)=\forall_1\forall_2(\exists_2\exists_1 x\ra y) 
                 =\forall_1(\exists_2\exists_1 x\ra \forall_2 y)$ \\
$\hspace*{5cm}$ $=\forall_1(\exists_1\exists_2 x\ra \forall_2 y)  
                 =\exists_1\exists_2 x\ra \forall_1\forall_2 y$. \\
$\hspace*{2cm}$ $\forall_1\forall_2(\exists_1\exists_2 x\rs y)=\forall_1\forall_2(\exists_2\exists_1 x\rs y) 
                 =\forall_1(\exists_2\exists_1 x\rs \forall_2 y)$ \\
$\hspace*{5cm}$ $=\forall_1(\exists_1\exists_2 x\rs \forall_2 y)  
                 =\exists_1\exists_2 x\rs \forall_1\forall_2 y$. \\
$\hspace*{2cm}$ $\exists_1\exists_2\forall_1\forall_2 x=\exists_2\exists_1\forall_1\forall_2 x=
                 \exists_2\forall_1\forall_2 x=\exists_2\forall_2\forall_1 x=\forall_2\forall_1 x$. \\
Hence, axioms $(M_3)$, $(M_4)$ and $(M_5)$ are satisfied. \\
We conclude that $(\exists_1\exists_2,\forall_1\forall_2)\in \mathcal{MOP}(A)$. 
Similarly, $(\exists_2\exists_1,\forall_2\forall_1)\in \mathcal{MOP}(A)$. 
\end{proof}

\begin{example} \label{mpsBE-45} Consider the pseudo BE-algebra $A$ and the monadic operators 
$(\exists_2,\forall_2)$, $(\exists_3,\forall_3)$ from Example \ref{mpsBE-10-30}. One can easily check that $\exists_2\exists_3=\exists_3\exists_2=\exists_4$ and $\forall_2\forall_3=\forall_3\forall_2=\forall_4$. 
Hence $(\exists_2\exists_3,\forall_2\forall_3)=(\exists_3\exists_2,\forall_3\forall_2)=(\exists_4,\forall_4)\in 
\mathcal{MOP}(A)$.
\end{example}


\begin{proposition} \label{mpsBE-60} If $(A,\exists,\forall)$ is a monadic pseudo BCK-meet-semilattice, then the 
following hold, for all $x, y\in A:$ \\
$(1)$ $\exists(\exists x\wedge \exists y)=\exists x\wedge \exists y;$ \\
$(2)$ $\forall(\exists x\wedge \exists y)=\exists x\wedge \exists y;$ \\
$(3)$ $\forall(x\wedge y)=\forall x\wedge \forall y;$ \\
$(4)$ $\exists(x\wedge y)\le \exists x\wedge \exists y;$ \\
$(5)$ $\forall(x\wedge y)\le \exists x\wedge \exists y$. 
\end{proposition}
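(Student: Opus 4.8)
The plan is to lean on two structural facts that hold throughout: first, in a monadic pseudo BCK-algebra both $\exists$ and $\forall$ are isotone (Corollary \ref{mpsBE-25-10}, since every pseudo BCK-algebra has the (T) property), and second, that the meet of two fixed elements is again fixed. This second fact is the engine behind parts $(1)$--$(3)$, and the order $\le$ is a genuine partial order (antisymmetry is axiom $(psBCK_6)$), so every ``$\le$ in both directions'' argument closes to an equality. Since $(A,\le)$ is a meet-semilattice, $x\wedge y$ is the greatest lower bound, which is what lets me convert two separate upper/lower bound inequalities into a single inequality against the meet.

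For $(1)$, I would note that $\exists x,\exists y$ are fixed, so $\exists x\wedge \exists y\le \exists x$ and $\exists x\wedge \exists y\le \exists y$; applying the isotone map $\exists$ and Proposition \ref{mpsBE-20}$(5)$ gives $\exists(\exists x\wedge \exists y)\le \exists\exists x=\exists x$ and likewise $\le \exists y$, hence $\exists(\exists x\wedge \exists y)\le \exists x\wedge \exists y$ as the meet is the greatest lower bound. The reverse inequality is exactly $(M_1)$, and antisymmetry yields equality. Part $(2)$ is then immediate: by $(1)$ the element $\exists x\wedge \exists y$ is a fixed point of $\exists$, so by Proposition \ref{mpsBE-20}$(4)$ it is a fixed point of $\forall$, which is precisely the assertion of $(2)$.

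Parts $(4)$ and $(5)$ are pure monotonicity. For $(4)$, from $x\wedge y\le x$ and $x\wedge y\le y$ and isotonicity of $\exists$ we get $\exists(x\wedge y)\le \exists x$ and $\le \exists y$, hence $\exists(x\wedge y)\le \exists x\wedge \exists y$. For $(5)$, axiom $(M_2)$ gives $\forall(x\wedge y)\le x\wedge y\le x\le \exists x$ and similarly $\le \exists y$, so $\forall(x\wedge y)\le \exists x\wedge \exists y$; alternatively $(5)$ drops out of $(3)$ once that is in hand, since $\forall x\le x\le \exists x$.

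The real work is $(3)$, and this is the step I expect to be the main obstacle. The inequality $\forall(x\wedge y)\le \forall x\wedge \forall y$ is again just isotonicity of $\forall$. For the reverse inequality the key observation is that $\forall x\wedge \forall y$ is itself a fixed element: because $\forall x$ and $\forall y$ are fixed (by $(M_5)$, $\exists\forall x=\forall x$), the meet-of-fixed-points argument of part $(1)$---now with $\exists$ applied to $\forall x\wedge \forall y$---gives $\exists(\forall x\wedge \forall y)=\forall x\wedge \forall y$, whence $\forall(\forall x\wedge \forall y)=\forall x\wedge \forall y$ by Proposition \ref{mpsBE-20}$(4)$. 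Then $(M_2)$ gives $\forall x\le x$ and $\forall y\le y$, so $\forall x\wedge \forall y\le x\wedge y$; applying the isotone $\forall$ and the fixed-point identity just obtained yields $\forall x\wedge \forall y=\forall(\forall x\wedge \forall y)\le \forall(x\wedge y)$, and antisymmetry finishes the proof. The subtle point to get right is precisely that $\forall x\wedge \forall y$ must be certified to lie in $A_{\exists\forall}$ before the isotonicity trick can be deployed; this is where the closure/interior structure of the quantifiers does the essential work.
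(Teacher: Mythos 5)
Your proof is correct and takes essentially the same route as the paper's: each part rests on isotonicity of the quantifiers (Corollary \ref{mpsBE-25-10}) together with the idempotence and fixed-point identities of Proposition \ref{mpsBE-20}, with your parts $(1)$ and $(2)$ matching the paper verbatim and $(3)$--$(5)$ differing only in bookkeeping. In particular, for $(3)$ you certify $\exists(\forall x\wedge \forall y)=\forall x\wedge \forall y$ explicitly via part $(1)$ and $(M_5)$ before applying the isotone $\forall$, whereas the paper establishes the same fixed-point fact inline by arguing with a generic $z\le \forall x\wedge \forall y$ (obtaining $z\le \exists z=\forall\exists z\le \forall(x\wedge y)$) and then instantiating $z:=\forall x\wedge \forall y$ --- the underlying mechanism is identical.
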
 
\begin{proof}
$(1)$ By $(M_1)$, $\exists x\wedge \exists y\le \exists(\exists x\wedge \exists y)$. 
On the other hand, from $\exists x\wedge \exists y\le \exists y$, $\exists x\wedge \exists y\le \exists y$ we 
get $\exists(\exists x\wedge \exists y)\le \exists\exists x=\exists x$ and 
$\exists(\exists x\wedge \exists y)\le \exists\exists y=\exists y$. 
Hence $\exists(\exists x\wedge \exists y)\le \exists x\wedge \exists y$. 
It follows that $\exists(\exists x\wedge \exists y)=\exists x\wedge \exists y$. \\
$(2)$ It follows by $(1)$ and Proposition \ref{mpsBE-20}$(4)$. \\
$(3)$ Let $z\le \forall x\wedge \forall y$, so $z\le \forall x$ and $z\le \forall y$. 
It follows that $\exists z\le \exists\forall x=\forall x$ and $\exists z\le \exists\forall y=\forall y$. 
Hence $\exists z\le \forall x\wedge \forall y\le x\wedge y$, so 
$z\le \exists z=\forall\exists z\le \forall(x\wedge y)$. 
Taking $z:=\forall x\wedge \forall y$, we get $\forall x\wedge \forall y\le \forall (x\wedge y)$. 
On the other hand, since $x\wedge y\le x,y$, we have $\forall(x\wedge y)\le \forall x\wedge \forall y$. 
Thus $\forall(x\wedge y)= \forall x\wedge \forall y$. \\
$(4)$ From $x\le \exists x$ and $y\le \exists y$ we get $x\wedge y\le \exists x\wedge \exists y$. 
Using $(1)$ we have $\exists(x\wedge y)\le \exists(\exists x\wedge \exists y)=\exists x\wedge \exists y$. \\
$(5)$ Similarly as in $(4)$, $x\wedge y\le \exists x\wedge \exists y$, so by $(3)$, 
$\forall(x\wedge y)\le \forall(\exists x\wedge \exists y)=\forall\exists x\wedge \forall\exists x=
\exists x\wedge \exists y$. 
\end{proof}

\begin{proposition} \label{mpsBE-70} If $(A,\exists,\forall)$ is a monadic pseudo BCK-join-semilattice, then the 
following hold, for all $x, y\in A:$ \\
$(1)$ $\forall(\exists x\vee \exists y)=\exists x\vee \exists y;$ \\
$(2)$ $\exists(\exists x\vee \exists y)=\exists x\vee \exists y;$ \\
$(3)$ $\exists(x\vee y)=\exists x\vee \exists y;$ \\
$(4)$ $\forall x\vee \exists y\le \forall(x\vee \exists y)$. 
\end{proposition}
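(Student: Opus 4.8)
The plan is to mirror the proof of the dual statement for meet-semilattices (Proposition \ref{mpsBE-60}), exploiting that in a pseudo BCK-join-semilattice both quantifiers are isotone (Corollary \ref{mpsBE-25-10}$(2)$), that $\exists$ is a closure operator and $\forall$ an interior operator (Remark \ref{mpsBE-20-20}), and the fixed-point facts $\forall\exists z=\exists z$, $\forall\forall z=\forall z$ together with the equivalence $\forall z=z\Leftrightarrow \exists z=z$ (Proposition \ref{mpsBE-20}$(3),(4),(6)$). The key structural observation is that, for joins, it is the \emph{universal} quantifier that one pins down first: I would establish $(1)$ before $(2)$, reversing the order of the two fixed-point claims relative to the meet case.

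First I would prove $(1)$. Since $\forall$ is decreasing, $\forall(\exists x\vee \exists y)\le \exists x\vee \exists y$. For the reverse inequality, note $\exists x=\forall\exists x$ and $\exists x\le \exists x\vee \exists y$, so $\forall$ isotone gives $\exists x=\forall\exists x\le \forall(\exists x\vee \exists y)$; likewise $\exists y\le \forall(\exists x\vee \exists y)$. Because $\exists x\vee \exists y$ is the \emph{least} upper bound of $\exists x$ and $\exists y$, it follows that $\exists x\vee \exists y\le \forall(\exists x\vee \exists y)$, whence equality. Setting $z:=\exists x\vee \exists y$, statement $(1)$ reads $\forall z=z$, so Proposition \ref{mpsBE-20}$(4)$ immediately yields $\exists z=z$, which is exactly $(2)$.

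With $(2)$ in hand, $(3)$ is routine: monotonicity gives $\exists x\vee \exists y\le \exists(x\vee y)$ from $x,y\le x\vee y$, while from $x\le \exists x\le \exists x\vee \exists y$ and $y\le \exists y\le \exists x\vee \exists y$ we obtain $x\vee y\le \exists x\vee \exists y$, and applying $\exists$ together with $(2)$ gives $\exists(x\vee y)\le \exists(\exists x\vee \exists y)=\exists x\vee \exists y$. For $(4)$, since $\forall x\le x\le x\vee \exists y$ the facts that $\forall$ is isotone and $\forall\forall x=\forall x$ give $\forall x\le \forall(x\vee \exists y)$, and since $\exists y\le x\vee \exists y$ the same isotonicity together with $\forall\exists y=\exists y$ gives $\exists y\le \forall(x\vee \exists y)$; as $\forall x\vee \exists y$ is the least upper bound of the two, $\forall x\vee \exists y\le \forall(x\vee \exists y)$.

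I expect the only genuine subtlety to lie in $(1)$: one must verify that the join of two $\exists$-closed (equivalently $\forall$-fixed) elements is again $\forall$-fixed. Closure/interior-operator generalities alone do not furnish this, since the fixed set $A_{\exists\forall}$ need not be closed under the ambient join; the argument must instead invoke the least-upper-bound universal property of $\vee$ in tandem with the isotonicity of $\forall$ and $\forall\exists=\exists$, rather than the mere lower-bound plus monotonicity step used in the meet case. Everything following $(1)$ is then formal.
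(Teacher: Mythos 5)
Your proposal is correct and follows essentially the same route as the paper: parts $(1)$--$(3)$ coincide step for step (prove $\forall(\exists x\vee \exists y)=\exists x\vee \exists y$ via $\forall\exists=\exists$, isotonicity and the least-upper-bound property against $(M_2)$, then transfer to $\exists$ by Proposition \ref{mpsBE-20}$(4)$, then sandwich $\exists(x\vee y)$). In $(4)$ you are in fact slightly more direct than the paper, which detours through $(M_5)$ and part $(1)$ applied to $\exists\forall x\vee \exists y$, whereas you apply isotone $\forall$ componentwise to $x\le x\vee \exists y$ and $\exists y\le x\vee \exists y$ and invoke the least-upper-bound property together with $\forall\forall x=\forall x$ and $\forall\exists y=\exists y$ --- a cosmetic simplification of the same argument.
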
 
\begin{proof}
$(1)$ From $\exists x\le \exists x\vee \exists y$ and $\exists y\le \exists x\vee \exists y$ we get 
$\exists x=\forall\exists x\le \forall(\exists x\vee \exists y)$ and 
$\exists y=\forall\exists y\le \forall(\exists x\vee \exists y)$, 
so $\exists x\vee \exists y\le \forall(\exists x\vee \exists y)$. 
Since by $(M_2)$, $\forall(\exists x\vee \exists y)\le \exists x\vee \exists y$, it follows that 
$\forall(\exists x\vee \exists y)=\exists x\vee \exists y$. \\
$(2)$ It is a consequence of $(1)$ and Proposition \ref{mpsBE-20}$(4)$. \\
$(3)$ From $x\le x\vee y$ and $y\le x\vee y$ we have $\exists x\le \exists(x\vee y)$ and 
$\exists y\le \exists(x\vee y)$, so $\exists x \vee \exists y\le \exists(x\vee y)$. 
On the other hand, from $x\le \exists x$ and $y\le \exists x$ we get $x\vee y\le \exists x\vee \exists y$, and 
using $(8)$ and $(18)$ we have $\exists(x\vee y)\le \exists(\exists x\vee \exists y)=\exists x\vee \exists y$. 
Hence $\exists(x\vee y)=\exists x\vee \exists y$. \\
$(4)$ From $\forall x\le x$, we get $\forall x\vee \exists y\le x\vee \exists y$, and using $(M_5)$ we have 
$\exists\forall x\vee \exists y\le x\vee \exists y$. It follows that 
$\forall(\exists\forall x\vee \exists y)\le \forall(x\vee \exists y)$, and applying $(1)$ we get 
$\forall\exists\forall x\vee \forall\exists y\le \forall(x\vee \exists y)$. 
Finally, since $\exists\forall x=\forall x$, $\forall\forall x=\forall x$ and $\forall\exists x=\exists x$, 
we have $\forall x\vee \exists y\le \forall(x\vee \exists y)$. 
\end{proof}

\begin{proposition} \label{mpp-BCK-20} In any monadic pseudo BCK(pP)-algebra $A$ the following hold, for all 
$x, y\in A:$ \\ 
$(1)$ $\exists(\exists x\odot \exists y)=\exists x\odot \exists y;$ \\
$(2)$ $\exists(x\odot y)\le \exists x\odot \exists y;$ \\
$(3)$ $\forall(\forall x\odot \forall y)=\forall x\odot \forall y;$ \\
$(4)$ $\forall x\odot \forall y\le \forall(x\odot y);$ \\ 
$(5)$ $\forall x\odot \forall y\le \exists(x\odot y)$. 
\end{proposition}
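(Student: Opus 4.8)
The plan is to reduce all five statements to two facts about the pseudo-product. First, the $(pP)$ condition gives the residuation $u \odot v \le z \Leftrightarrow u \le v \ra z \Leftrightarrow v \le u \rs z$; in particular $a \odot b$ is the least $z$ with $a \le b \ra z$, so $a \le b \ra (a \odot b)$ always holds, and any $z$ with $a \le b \ra z$ satisfies $a \odot b \le z$. Second, I will use that every $\exists$-value and every $\forall$-value is fixed by both quantifiers: $\forall \exists x = \exists x$ and $\exists \exists x = \exists x$ (Proposition \ref{mpsBE-20}$(3),(5)$), while $\exists \forall x = \forall x$ and $\forall \forall x = \forall x$ ($(M_5)$ and Proposition \ref{mpsBE-20}$(6)$). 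Since a pseudo BCK(pP)-algebra is a pseudo BCK-algebra, $\exists$ and $\forall$ are isotone (Corollary \ref{mpsBE-25-10}) and $\odot$ is monotone in each variable (Proposition \ref{psBE-25-10}$(3)$). I regard $(1)$ and $(3)$ as the core, each asserting that a product of fixed points is a fixed point.

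For $(1)$ and $(3)$ I would argue uniformly: let $a, b$ be elements fixed by both quantifiers, so $\forall a = a$ and $b = \exists b$ (take $a = \exists x$, $b = \exists y$ for $(1)$ and $a = \forall x$, $b = \forall y$ for $(3)$). By $(M_2)$ I already have $\forall(a \odot b) \le a \odot b$, so only the reverse inequality is needed. Starting from $a \le b \ra (a \odot b)$, I apply the isotone map $\forall$ to get $a = \forall a \le \forall(b \ra (a \odot b))$. Because $b = \exists b$, axiom $(M_4)$ rewrites the right-hand side as $\forall(\exists b \ra (a \odot b)) = \exists b \ra \forall(a \odot b) = b \ra \forall(a \odot b)$. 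Thus $a \le b \ra \forall(a \odot b)$, and minimality of the product turns this into $a \odot b \le \forall(a \odot b)$. With $(M_2)$ this gives $\forall(a \odot b) = a \odot b$, and then $\exists(a \odot b) = a \odot b$ by Proposition \ref{mpsBE-20}$(4)$. The same computation with $\rs$ in place of $\ra$ covers the dual residuation, but is not separately required here.

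The remaining parts should follow by monotonicity alone. For $(2)$, from $x \le \exists x$, $y \le \exists y$ and monotonicity of $\odot$ I get $x \odot y \le \exists x \odot \exists y$; applying the isotone $\exists$ and then $(1)$ yields $\exists(x \odot y) \le \exists(\exists x \odot \exists y) = \exists x \odot \exists y$. For $(4)$, from $\forall x \le x$, $\forall y \le y$ I get $\forall x \odot \forall y \le x \odot y$; applying $\forall$ and using $(3)$ gives $\forall x \odot \forall y = \forall(\forall x \odot \forall y) \le \forall(x \odot y)$. Finally $(5)$ is obtained by composing $(4)$ with $\forall(x \odot y) \le x \odot y \le \exists(x \odot y)$ (that is $(M_2)$ then $(M_1)$) and transitivity.

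I expect the only real obstacle to be the reverse inequality in $(1)$/$(3)$; the rest is routine bookkeeping. The delicate point there is the applicability of $(M_4)$, which demands that the antecedent of the implication be an $\exists$-value, and this is exactly why I first record $b = \exists b$. I would also double-check that the two uses of minimality (to obtain $a \le b \ra (a \odot b)$ and then to pass from $a \le b \ra \forall(a \odot b)$ back to $a \odot b \le \forall(a \odot b)$) are both legitimate instances of the $(pP)$ definition, and that isotonicity of $\forall$ is indeed available through Corollary \ref{mpsBE-25-10}.
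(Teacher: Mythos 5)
Your proposal is correct and takes essentially the same route as the paper: the core of your argument for $(1)$ --- start from $a\le b\ra (a\odot b)$, apply the isotone $\forall$, use $(M_4)$ after recording that the antecedent is an $\exists$-value, and conclude $a\odot b\le \forall(a\odot b)$ by minimality of the pseudo-product, then pass to $\exists$ via Proposition \ref{mpsBE-20}$(4)$ --- is exactly the paper's proof of $(1)$, and your $(2)$ and $(4)$ coincide with the paper's. The only cosmetic deviations are that the paper derives $(3)$ from $(2)$ together with $(M_5)$, $(M_1)$ and Proposition \ref{mpsBE-20}$(4)$ rather than rerunning your unified fixed-point argument with $a=\forall x$, $b=\forall y$, and proves $(5)$ by the computation $1=\forall x\odot \forall y\ra x\odot y\le \forall x\odot \forall y\ra \exists(x\odot y)$ instead of your three-step chain with transitivity --- both immaterial, since in a pseudo BCK-algebra $\le$ is a partial order.
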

\begin{proof}
$(1)$ From $\exists x\odot \exists y\le \exists x\odot \exists y$ we have 
$\exists x\le \exists y \ra \exists x\odot \exists y$, and using Proposition \ref{mpsBE-20}$(3)$ 
and $(M_4)$ we get 
$\exists x=\forall\exists x\le \forall(\exists y\ra \exists x\odot \exists y)=
\exists y\ra \forall(\exists x\odot \exists y)$. 
It follows that $\exists x\odot \exists y\le \forall(\exists x\odot \exists y)$. 
On the other hand, by $(M_2)$, $\forall(\exists x\odot \exists y)\le \exists x\odot \exists y$, hence             
$\forall(\exists x\odot \exists y)=\exists x\odot \exists y$. Applying Proposition \ref{mpsBE-20}$(4)$, we get          $\exists(\exists x\odot \exists y)=\exists x\odot \exists y$. \\ 
$(2)$ From $x\le \exists x$ and $y\le \exists y$ we get $x\odot y\le \exists x\odot \exists y$, and using $(1)$ 
it follows that $\exists(x\odot y)\le \exists(\exists x\odot \exists y)=\exists x\odot \exists y$. \\
$(3)$ By $(2)$ and $(M_5)$ we have 
$\exists(\forall x\odot \forall y)\le \exists\forall x\odot \exists\forall y=\forall x\odot \forall y$. 
On the other hand, by $(M_1)$, $\forall x\odot \forall y\le \exists(\forall x\odot \forall y)$, hence 
$\exists(\forall x\odot \forall y)=\forall x\odot \forall y$. Applying Proposition \ref{mpsBE-20}$(4)$ we get 
$\forall(\forall x\odot \forall y)=\forall x\odot \forall y$. \\
$(4)$ From $\forall x\le x$ and $\forall y\le y$ we have $\forall x\odot \forall y\le x\odot y$, hence, 
by $(3)$, we get $\forall x\odot \forall y=\forall(\forall x\odot \forall y)\le \forall(x\odot y)$. \\
$(5)$ By $(M_1)$ and $(M_2)$ we have 
$1=\forall x\odot \forall y\ra x\odot y\le \forall x\odot \forall y\ra \exists(x\odot y)$, so that 
$\forall x\odot \forall y\ra \exists(x\odot y)=1$, hence $\forall x\odot \forall y\le\exists(x\odot y)$. 
\end{proof}

By the next result we extend to the case of monadic pseudo BCK(pP)-algebras some results proved in \cite{Zah1} 
for monadic bounded commutative BE-algebras (in fact, monadic bounded commutative dual BCK-algebras). 

\begin{proposition} \label{mpp-BCK-20-10} In any monadic pseudo BCK(pP)-algebra $A$ the following hold, for all 
$x, y\in A:$ \\ 
$(1)$ $\forall x\odot \exists y\ra \exists(x\odot y)=1$ and $\exists x \odot \forall y\rs \exists(x\odot y)=1;$ \\ 
$(2)$ $\exists x\odot \exists y\ra \exists(\exists x\odot y)=1$ and 
      $\exists x\odot \exists y\rs \exists(x\odot \exists y)=1;$ \\
$(3)$ $\exists(\exists x\odot y)=\exists x\odot \exists y=\exists(x\odot \exists y);$ \\
$(4)$ $\exists(x\odot \forall y)=\exists x\odot \forall y$ and $\exists(\forall x\odot y)=\forall x\odot \exists y$.  
\end{proposition}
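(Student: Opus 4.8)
The plan is to prove part $(1)$ directly from the residuation built into condition (pP) together with the ``existentialization'' equivalence of Proposition \ref{mpsBE-20}$(8)$, and then to obtain parts $(2)$, $(3)$, $(4)$ almost for free by feeding fixed points of $\exists$ (respectively $\forall$) into the formula already established. Recall that a pseudo BCK(pP)-algebra is in particular a monadic pseudo BCK-algebra, so all of Propositions \ref{mpsBE-20} and \ref{mpp-BCK-20} are available, and condition (pP) reads $a\odot b\le c$ iff $a\le b\ra c$ iff $b\le a\rs c$.

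For the first identity of $(1)$, namely $\forall x\odot \exists y\le \exists(x\odot y)$, I would start from $x\odot y\le x\odot y$, which by (pP) gives $y\le x\rs (x\odot y)$. Pushing the right-hand side up through $x\odot y\le \exists(x\odot y)$ (from $(M_1)$) and monotonicity of $\rs$ in its second argument (Proposition \ref{psBE-20}$(3)$) yields $y\le x\rs \exists(x\odot y)$; then, since $\forall x\le x$ by $(M_2)$ and $\rs$ is antitone in its first argument (Proposition \ref{psBE-20}$(2)$), I get $y\le \forall x\rs \exists(x\odot y)$, and I set $z=\forall x\rs \exists(x\odot y)$. The key observation is that $z$ is a fixed point of $\exists$: by Proposition \ref{mpsBE-20}$(10)$ we have $\forall z=z$, hence $\exists z=z$ by Proposition \ref{mpsBE-20}$(4)$. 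Now $y\le z=\exists z$, so Proposition \ref{mpsBE-20}$(8)$ upgrades this to $\exists y\le \exists z=\forall x\rs \exists(x\odot y)$, and a final application of (pP) turns this back into $\forall x\odot \exists y\le \exists(x\odot y)$. The second identity, $\exists x\odot \forall y\le \exists(x\odot y)$, is obtained by the mirror-image argument using the $\ra$-form of (pP), $\forall y\le y$, and the companion fixed point $\forall y\ra \exists(x\odot y)$.

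Parts $(2)$--$(4)$ should follow by specialising $(1)$ to fixed points and combining with Proposition \ref{mpp-BCK-20}$(2)$. For $(2)$ I note that $\forall\exists x=\exists x$ (Proposition \ref{mpsBE-20}$(3)$), so substituting $\exists x$ for $x$ in the first identity of $(1)$ gives $\exists x\odot \exists y\le \exists(\exists x\odot y)$, and symmetrically, using $\forall\exists y=\exists y$, the second identity gives $\exists x\odot \exists y\le \exists(x\odot \exists y)$. For $(3)$ the reverse inequalities come from Proposition \ref{mpp-BCK-20}$(2)$ together with $\exists\exists=\exists$ (Proposition \ref{mpsBE-20}$(5)$): e.g. $\exists(\exists x\odot y)\le \exists\exists x\odot \exists y=\exists x\odot \exists y$, which with $(2)$ gives $\exists(\exists x\odot y)=\exists x\odot \exists y$, and likewise $\exists(x\odot \exists y)=\exists x\odot \exists y$. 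Finally $(4)$ is $(3)$ read at the fixed points $\forall y=\exists\forall y$ and $\forall x=\exists\forall x$ (from $(M_5)$): substituting $\forall y$ for $y$ in $\exists(x\odot \exists y)=\exists x\odot \exists y$ gives $\exists(x\odot \forall y)=\exists x\odot \forall y$, and substituting $\forall x$ for $x$ in $\exists(\exists x\odot y)=\exists x\odot \exists y$ gives $\exists(\forall x\odot y)=\forall x\odot \exists y$.

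The main obstacle is entirely concentrated in part $(1)$, and within it in the correct bookkeeping of the four monotonicity/antitonicity moves and the recognition that $\forall x\rs \exists(x\odot y)$ (and its $\ra$-analogue) is $\exists$-fixed; once that fixed-point fact is in hand, Proposition \ref{mpsBE-20}$(8)$ does the real work of turning an inequality about $y$ into one about $\exists y$. I would double-check that the two occurrences of (pP) are applied in the correct ($\ra$ versus $\rs$) form, since that is the only place where the non-commutativity of the algebra could introduce an error; everything after $(1)$ is purely formal substitution of fixed points.
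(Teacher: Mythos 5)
Your proposal is correct, and for parts $(2)$--$(4)$ it coincides with the paper's own proof (the same substitutions $x\mapsto\exists x$, $y\mapsto\exists y$, $x\mapsto\forall x$, $y\mapsto\forall y$ justified by $\forall\exists x=\exists x$ and $\exists\forall x=\forall x$; for the reverse inequality in $(3)$ you cite Proposition \ref{mpp-BCK-20}$(2)$ where the paper uses Proposition \ref{mpp-BCK-20}$(1)$ plus isotonicity of $\exists$, which is the same content). Where you genuinely diverge is in part $(1)$, which you rightly identify as the crux. The paper proves it by a purely equational chain: it rewrites $\forall x\odot \exists y\ra \exists(x\odot y)$ as $\forall x\ra(\exists y\ra \exists(x\odot y))$ via Proposition \ref{psBE-25-10}$(5)$, then pulls $\forall$ through with $(M_3)$, $(M_5)$, $(M_4)$ to obtain $\forall(\forall x\odot y\ra \exists(x\odot y))$, and bounds this below by $\forall(x\odot y\ra \exists(x\odot y))=\forall 1=1$ using $\forall x\odot y\le x\odot y$ and $(M_1)$. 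You instead work order-theoretically: from $y\le x\rs(x\odot y)$ you climb via Proposition \ref{psBE-20}$(2)$,$(3)$ to $y\le \forall x\rs \exists(x\odot y)$, observe that the right-hand side is $\forall$-fixed by Proposition \ref{mpsBE-20}$(10)$ and hence $\exists$-fixed by \ref{mpsBE-20}$(4)$, upgrade to $\exists y\le \forall x\rs \exists(x\odot y)$ via \ref{mpsBE-20}$(8)$, and residuate back; the mirror argument with the $\ra$-form of (pP) and the fixed point $\forall y\ra \exists(x\odot y)$ handles the second identity correctly. Both arguments are sound; yours makes explicit the fixed-point/Galois mechanism underlying the quantifiers (in the spirit of Remark \ref{mpsBE-40-10} and \ref{mpsBE-20}$(8)$, legitimate here since the order in a pseudo BCK-algebra is transitive), while the paper's computation is self-contained at the level of the axioms $(M_3)$--$(M_5)$ and in passing establishes the stronger equality $\forall x\odot \exists y\ra \exists(x\odot y)=\forall(\forall x\odot y\ra \exists(x\odot y))$, of which only the lower bound by $1$ is used.
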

\begin{proof}
$(1)$ The proof uses the fact that, by $(M_2)$ and Propositions \ref{psBE-25-10}$(3)$ and \ref{psBE-20}$(2)$, 
we have $\forall x\odot y\le x\odot y$, so $\forall x\odot y\ra z \ge x\odot y\ra z$, 
for all $x, y, \textit{}z\in A$. Then we have: \\
$\hspace*{1cm}$   $\forall x\odot \exists y\ra \exists(x\odot y)=\forall x\ra (\exists y\ra \exists(x\odot y))$ 
                                                (by Prop. \ref{psBE-25-10}$(5)$) \\
$\hspace*{4.5cm}$ $=\forall x\ra \forall(y\ra \exists(x\odot y))$ (by $(M_3))$ \\
$\hspace*{4.5cm}$ $=\exists\forall x\ra \forall(y\ra \exists(x\odot y))$ (by $(M_5)$) \\
$\hspace*{4.5cm}$ $=\forall(\exists\forall x\ra (y\ra \exists(x\odot y)))$ (by $(M_4)$) \\
$\hspace*{4.5cm}$ $=\forall(\forall x\ra (y\ra \exists(x\odot y)))$ (by $(M_5)$) \\
$\hspace*{4.5cm}$ $=\forall(\forall x\odot y\ra \exists(x\odot y))$ (by Prop. \ref{psBE-25-10}$(5)$) \\ 
$\hspace*{4.5cm}$ $\ge \forall(x\odot y\ra \exists(x\odot y))$ (by Prop. \ref{psBE-25-10}$(3)$, \ref{psBE-20}$(2)$) \\ 
$\hspace*{4.5cm}$ $=\forall 1=1$ (by $(M_2)$). \\
Hence, $\forall x\odot \exists y\ra \exists(x\odot y)=1$. Similarly, we get: \\
$\hspace*{1cm}$   $\exists x\odot \forall y\ra \exists(x\odot y)=\forall y\rs (\exists x\rs \exists(x\odot y))$ 
                                                (by Prop. \ref{psBE-25-10}$(5)$) \\
$\hspace*{4.5cm}$ $=\forall y\rs \forall(x\rs \exists(x\odot y))$ (by $(M_3))$ \\
$\hspace*{4.5cm}$ $=\exists\forall y\rs \forall(x\rs \exists(x\odot y))$ (by $(M_5)$) \\
$\hspace*{4.5cm}$ $=\forall(\exists\forall y\ra (x\ra \exists(x\odot y)))$ (by $(M_4)$) \\
$\hspace*{4.5cm}$ $=\forall(\forall y\rs (x\rs \exists(x\odot y)))$ (by $(M_5)$) \\
$\hspace*{4.5cm}$ $=\forall(\forall x\odot y\rs \exists(x\odot y))$ (by Prop. \ref{psBE-25-10}$(5)$) \\ 
$\hspace*{4.5cm}$ $\ge \forall(x\odot y\rs \exists(x\odot y))$ (by Prop. \ref{psBE-25-10}$(3)$, \ref{psBE-20}$(2)$) \\ 
$\hspace*{4.5cm}$ $=\forall 1=1$ (by $(M_2)$). \\
It follows that $\exists x \odot \forall y\rs \exists(x\odot y)=1$. \\
$(2)$ Since  $\forall\exists x=\exists x$, replacing $x$ by $\exists x$ in the first identity of $(1)$, we get 
$\exists x\odot \exists y\ra \exists(\exists x\odot y)=1$. 
Similarly, replacing $y$ by $\exists y$ in the second identity of $(1)$, we get 
$\exists x\odot \exists y\rs \exists(x\odot \exists y)=1$. \\
$(3)$ Applying $(2)$ we have $\exists x\odot \exists y\le \exists(\exists x\odot y)$ and 
$\exists x\odot \exists y\le \exists(x\odot \exists y)$. 
On the other hand, from $\exists x\odot y\le \exists x\odot \exists y$ and 
$x\odot \exists y\le \exists x\odot \exists y$, applying Proposition \ref{mpp-BCK-20}$(1)$ we get 
$\exists(\exists x\odot y)\le \exists(\exists x\odot \exists y)=\exists x\odot \exists y$ and 
$\exists(x\odot \exists y)\le \exists(\exists x\odot \exists y)=\exists x\odot \exists y$.  
We conclude that $\exists(\exists x\odot y)=\exists x\odot \exists y=\exists(x\odot \exists y)$. \\ 
$(4)$ From $(3)$, since $\exists\forall x=\forall x$, replacing $x$ by $\forall x$ in 
$\exists(\exists x\odot y)=\exists x\odot \exists y$, we get $\exists(\forall x\odot y)=\forall x\odot \exists y$. 
Similarly, replacing $y$ by $\forall y$ in $\exists(x\odot \exists y)=\exists x\odot \exists y$, it follows that 
$\exists(x\odot \forall y)=\exists x\odot \forall y$.   
\end{proof}

\begin{proposition} \label{mpp-BCK-30} In any monadic bounded commutative pseudo BCK-algebra $A$ the 
following hold, for all $x, y\in A:$ \\ 
$(1)$ $\forall(\forall x\oplus \forall y)=\forall x\oplus \forall y;$ \\
$(2)$ $\forall(x\oplus y)\ge \forall x\oplus \forall y$. 
\end{proposition}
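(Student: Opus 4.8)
The plan is to obtain both parts from the already established facts about the multiplicative operation $\odot$, transferring to the additive operation $\oplus$ by De~Morgan duality and the isotonicity of $\forall$.

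For part $(1)$ I would start from Proposition \ref{mpp-BCK-20}$(3)$, which says $\forall(\forall x\odot\forall y)=\forall x\odot\forall y$, i.e. $\forall x\odot\forall y$ is a fixed element. By Proposition \ref{mpsBE-20}$(4)$ an element is fixed for $\forall$ exactly when it is fixed for $\exists$, so combined with $(M_5)$ this gives
\[
\exists(\forall x\odot\forall y)=\forall x\odot\forall y=\exists\forall x\odot\exists\forall y.
\]
Now I apply the equivalence of Proposition \ref{minv-psBE-70}$(3)$ with $x,y$ replaced by $\forall x,\forall y$: its right-hand side is precisely the displayed equality, hence its left-hand side holds, namely $\forall(\forall x\oplus\forall y)=\forall\forall x\oplus\forall\forall y$. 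Using $\forall\forall=\forall$ from Proposition \ref{mpsBE-20}$(6)$ then yields $\forall(\forall x\oplus\forall y)=\forall x\oplus\forall y$, which is $(1)$. A self-contained alternative writes $\forall x\oplus\forall y=(\exists y^{\sim}\odot\exists x^{\sim})^{-}$ via Lemma \ref{minv-psBE-30-10}$(1)$ together with $(\forall z)^{\sim}=\exists z^{\sim}$ from Proposition \ref{minv-psBE-50}$(3)$; since $\exists y^{\sim}\odot\exists x^{\sim}$ lies in $\Img(\exists)$ by Proposition \ref{mpp-BCK-20}$(1)$, Proposition \ref{mpsBE-50}$(4)$ shows that its negation is fixed under $\forall$, giving the same conclusion.

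For part $(2)$ I would imitate the proof of the $\odot$-inequality in Proposition \ref{mpp-BCK-20}$(4)$. First note that $\oplus$ is isotone in each argument: from $x\oplus y=y^{\sim}\ra x=x^{-}\rs y$ and Proposition \ref{psBE-20}$(3)$ (monotonicity of $\ra,\rs$ in the second variable), $a\le a'$ and $b\le b'$ imply $a\oplus b\le a'\oplus b'$. Applying this to $\forall x\le x$ and $\forall y\le y$ (which hold by $(M_2)$) gives $\forall x\oplus\forall y\le x\oplus y$. Since the bounded commutative pseudo BCK-algebra $A$ is in particular a pseudo BCK-algebra, Corollary \ref{mpsBE-25-10}$(2)$ makes $\forall$ isotone, so $\forall(\forall x\oplus\forall y)\le\forall(x\oplus y)$. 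Combining this with part $(1)$ gives $\forall x\oplus\forall y=\forall(\forall x\oplus\forall y)\le\forall(x\oplus y)$, which is $(2)$.

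The calculations are routine once the machinery for $\odot$ is in hand. The only point demanding care is the duality bookkeeping: one must apply the biconditional of Proposition \ref{minv-psBE-70}$(3)$ to the arguments $\forall x,\forall y$ rather than $x,y$, and invoke $(M_5)$ to rewrite $\exists\forall x$ as $\forall x$ so that the $\exists$-side matches the required form $\exists u\odot\exists v$. No appeal to transitivity is needed beyond what Corollary \ref{mpsBE-25-10} already supplies, so the argument reduces entirely to the cited facts about $\odot$ and the negation identities.
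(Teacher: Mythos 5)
The weak point is your primary derivation of $(1)$. Proposition \ref{minv-psBE-70}$(3)$ cannot be applied pointwise at the pair $(\forall x,\forall y)$: although its statement carries a leading ``for all $x,y$'', the proof given in the paper only establishes the equivalence of the two identities as schemas --- to obtain the $\oplus$-identity at a pair $(x,y)$ it instantiates the assumed $\odot$-identity at the negated (and, since $\odot$ and $\oplus$ are non-commutative, swapped) pair $(y^{-},x^{-})$, via $x\oplus y=(y^{-}\odot x^{-})^{\sim}$, not at $(x,y)$ itself. Thus from $\exists(\forall x\odot\forall y)=\exists\forall x\odot\exists\forall y$ you are not entitled to conclude $\forall(\forall x\oplus\forall y)=\forall\forall x\oplus\forall\forall y$; what that conclusion actually requires is $\exists$-multiplicativity at $((\forall y)^{-},(\forall x)^{-})$. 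That instance does happen to hold --- $(\forall y)^{-}=\exists y^{-}$ and $(\forall x)^{-}=\exists x^{-}$ lie in $\Img(\exists)$ by Proposition \ref{minv-psBE-50}$(3)$, so Proposition \ref{mpp-BCK-20}$(1)$ applies --- but the biconditional route as you wrote it skips exactly this verification.

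Your self-contained alternative supplies precisely that verification and is fully correct: $\forall x\oplus\forall y=((\forall y)^{\sim}\odot(\forall x)^{\sim})^{-}=(\exists y^{\sim}\odot\exists x^{\sim})^{-}$ by Lemma \ref{minv-psBE-30-10}$(1)$ and Proposition \ref{minv-psBE-50}$(3)$, the element $\exists y^{\sim}\odot\exists x^{\sim}$ is $\exists$-fixed by Proposition \ref{mpp-BCK-20}$(1)$ together with $\exists\exists=\exists$, and then Proposition \ref{mpsBE-50}$(4)$ makes its negation $\forall$-fixed. This is in substance the paper's own proof, which writes $\forall x\oplus\forall y=((\forall y)^{-}\odot(\forall x)^{-})^{\sim}$ and invokes Propositions \ref{mpsBE-50}$(7)$ and \ref{mpp-BCK-20}$(3)$ where you use \ref{mpsBE-50}$(4)$ and \ref{mpp-BCK-20}$(1)$; the two bookkeepings are interchangeable. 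Your part $(2)$ coincides with the paper's argument, with the welcome extra detail that you justify monotonicity of $\oplus$ from $x\oplus y=y^{\sim}\ra x=x^{-}\rs y$ and Proposition \ref{psBE-20}$(3)$ rather than leaving it implicit. So: promote the alternative to be the actual proof of $(1)$, and either drop the appeal to Proposition \ref{minv-psBE-70}$(3)$ or restate that proposition explicitly as an equivalence between universally quantified identities before using it.
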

\begin{proof}
Since $A$ is involutive, it is a pseudo BCK(pP)-algebra. \\ 
$(1)$ Applying Lemma \ref{minv-psBE-30-10} and Propositions \ref{mpsBE-50}$(7)$, \ref{mpp-BCK-20}$(3)$, we get: \\
$\hspace*{2cm}$   $\forall x\oplus \forall y=((\forall y)^{-}\odot (\forall x)^{-})^{\sim}=
                  (\forall(\forall y)^{-}\odot \forall(\forall x)^{-})^{\sim}$ \\
$\hspace*{3.4cm}$ $=(\forall((\forall y)^{-}\odot (\forall x)^{-}))^{\sim}=
                  \forall(\forall((\forall y)^{-}\odot (\forall x)^{-})^{\sim})$ \\
$\hspace*{3.4cm}$ $=\forall(\forall(\forall y)^{-}\odot \forall(\forall x)^{-})^{\sim}=
                  \forall((\forall y)^{-}\odot (\forall x)^{-})^{\sim}$ \\
$\hspace*{3.4cm}$ $=\forall(\forall x\oplus \forall y)$. \\
$(2)$ From $\forall x\le x$, $\forall y\le y$ we get $\forall x\oplus \forall y\le x\oplus y$, hence 
$\forall(\forall x\oplus \forall y)\le \forall(x\oplus y)$. Using $(1)$, it follows that 
$\forall x\oplus \forall y\le \forall(x\oplus y)$.
\end{proof}

Extending the notion of a monadic bounded hoop defined in \cite{Wang1}, we define the monadic pseudo-hoops.

\begin{definition} \label{mpsh-10} Let $(A,\odot,\ra,\rs,1,\exists,\forall)$ be an algebra of type $(2,2,2,0,1,1)$ 
such that $(A,\odot,\ra,\rs,1)$ is a pseudo-hoop. Then $(A,\exists,\forall)$ is called a \emph{monadic pseudo-hoop} 
if it satisfies conditions $(M_1)$-$(M_6)$. 
\end{definition}

\begin{proposition} \label{mpsh-20}
Any monadic pseudo BCK(pP)-algebra satisfying axiom $(M_6)$ is a monadic pseudo-hoop. 
\end{proposition}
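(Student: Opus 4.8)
The plan is to split the statement into two essentially independent tasks: first, to show that the $\odot$-reduct $(A,\odot,\ra,\rs,1)$ of the given monadic pseudo BCK(pP)-algebra is a pseudo-hoop in the sense of Definition \ref{psBE-110}; and second, to verify that the monadic operator satisfies all of $(M_1)$--$(M_6)$, so that Definition \ref{mpsh-10} applies. The second task is almost immediate and explains why the hypothesis is phrased as it is: a monadic pseudo BCK(pP)-algebra is in particular a monadic pseudo BE-algebra, so $(M_1)$--$(M_5)$ hold by definition, and $(M_6)$ holds by assumption. Thus the whole weight of the argument falls on the first task, namely recognizing the pseudo-hoop structure on the underlying algebra.

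For the pseudo-hoop axioms I would check $(psH_1)$--$(psH_5)$ one at a time against the material already available for pseudo BCK(pP)-algebras. Axiom $(psH_1)$, $x\odot 1=1\odot x=x$, follows straight from the (pP) definition of $\odot$ as a minimum, using $1\ra z=z$ and $1\rs z=z$; axiom $(psH_2)$ is exactly $(psBE_1)$; and axioms $(psH_3)$ and $(psH_4)$, namely $(x\odot y)\ra z=x\ra(y\ra z)$ and $(x\odot y)\rs z=y\rs(x\rs z)$, are precisely the two identities of Proposition \ref{psBE-25-10}$(5)$ after renaming variables. So these four axioms are routine and require nothing beyond the preliminaries.

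The crux, and the step I expect to be the main obstacle, is axiom $(psH_5)$, the divisibility-type identity $(x\ra y)\odot x=(y\ra x)\odot y=x\odot(x\rs y)=y\odot(y\rs x)$. This is exactly the identity that genuinely separates pseudo-hoops from arbitrary pseudo BCK(pP)-algebras, and it cannot be read off from the elementary inequalities of Proposition \ref{psBE-25-10} (which only give that each of these four products is a common lower bound of $x$ and $y$). I would establish it by invoking the known correspondence between pseudo BCK(pP)-algebras and pseudo-hoops, whereby the pseudo-product realizes the meet through these four coinciding expressions (cf. \cite{Geo16}, \cite{Ciu2}). Once $(psH_5)$ is in hand the reduct is a pseudo-hoop, and combined with $(M_1)$--$(M_6)$ from the first paragraph, Definition \ref{mpsh-10} yields that $(A,\exists,\forall)$ is a monadic pseudo-hoop.
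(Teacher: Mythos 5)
Your decomposition is sound as far as it goes: $(M_1)$--$(M_5)$ are inherited because a monadic pseudo BCK(pP)-algebra is in particular a monadic pseudo BE-algebra, $(M_6)$ is assumed, and $(psH_1)$--$(psH_4)$ do follow routinely from the (pP) definition of $\odot$ and Proposition \ref{psBE-25-10}$(5)$. You also correctly isolate $(psH_5)$ as the genuine obstacle. But the step you use to dispose of it is a real gap: there is no ``known correspondence'' under which every pseudo BCK(pP)-algebra is a pseudo-hoop. The correspondence in the literature goes only one way --- \cite[Prop. 5.4]{Ciu14}, which is in fact the very result the paper's own one-line proof cites, says that every pseudo-hoop is a pseudo BCK(pP)-algebra. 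Pseudo-hoops form a \emph{proper} subclass, cut out exactly by the divisibility identity $(psH_5)$, which in particular forces $x\wedge y$ to exist and equal $(x\ra y)\odot x$. A concrete witness: on the chain $0<a<b<1$ take the commutative drastic product $x\odot y=0$ unless $x=1$ or $y=1$, with $x\odot 1=1\odot x=x$, and let $\ra\,=\,\rs$ be its residuum (so $x\ra y=1$ if $x\le y$, $1\ra y=y$, and $x\ra y=b$ if $1>x>y$). This is a pseudo BCK(pP)-algebra, and with $\exists=\forall=\mathrm{Id}_A$ (Example \ref{mpsBE-10-10}) all of $(M_1)$--$(M_6)$ hold trivially; yet $(a\ra b)\odot a=1\odot a=a$ while $(b\ra a)\odot b=b\odot b=0$, so $(psH_5)$ fails and the underlying algebra is not a pseudo-hoop in the sense of Definition \ref{psBE-110}.

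So your proof cannot be completed as written --- and, in fairness, your analysis exposes that the paper's own proof suffers from the same defect: it derives the proposition from the implication ``pseudo-hoop $\Rightarrow$ pseudo BCK(pP)-algebra,'' which is the converse of what the literal statement requires. The statement is salvageable only in a weakened form: either as its converse, namely that every monadic pseudo-hoop is a monadic pseudo BCK(pP)-algebra satisfying $(M_6)$, which is exactly what \cite[Prop. 5.4]{Ciu14} together with Definitions \ref{psBE-110} and \ref{mpsh-10} yields; or with the added hypothesis that the underlying pseudo BCK(pP)-algebra satisfies the divisibility identity $(psH_5)$ (equivalently, that it is a pseudo-hoop), in which case the routine verifications in your first two paragraphs already finish the argument. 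As proposed, however, the crucial step rests on a nonexistent theorem, and the counterexample above shows no argument can close it without changing the hypotheses.
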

\begin{proof}
According to \cite[Prop. 5.4]{Ciu14}, every pseudo-hoop $(A,\odot,\ra,\rs,1$ is a pseudo BCK(pP)-algebra. 
Since axioms $(M_1)$-$(M_5)$ hold in any monadic pseudo BCK-algebra, it follows that any monadic pseudo BCK(pP)-algebra satisfying axiom $(M_6)$ is a monadic pseudo-hoop. 
\end{proof}

\begin{proposition} \label{mpsh-30} 
Let $(A,\exists,\forall)$ be a monadic pseudo-hoop. The following hold, for all $x, y\in A:$ \\
$(1)$ $\forall(x\ra y)\odot x\le \exists x\wedge \exists y$ and 
      $\exists x\odot \forall(x\rs y)\le \exists x\wedge \exists y;$ \\
$(2)$ $\forall((x\ra y)\odot (x\ra y))\le (\exists x\ra \exists y)\odot (\exists x\ra \exists y)$ and 
      $\forall((x\rs y)\odot (x\rs y))\le (\exists x\rs \exists y)\odot (\exists x\rs \exists y)$. 
\end{proposition}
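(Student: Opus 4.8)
The plan is to exploit the fact that a monadic pseudo-hoop is in particular a monadic pseudo BCK(pP)-algebra (by the result cited in the proof of Proposition \ref{mpsh-20}), hence a monadic pseudo BCK-algebra whose underlying order is a meet-semilattice with $x\wedge y=(x\ra y)\odot x=x\odot(x\rs y)$ via $(psH_5)$. The single fact that drives both items is the inequality $\forall(x\ra y)\le \exists x\ra \exists y$ together with its right-handed companion $\forall(x\rs y)\le \exists x\rs \exists y$, both of which are exactly Proposition \ref{mpsBCK-10}$(4)$ rewritten in terms of ``$\le$". Everything else is monotonicity of $\odot$ (Proposition \ref{psBE-25-10}$(3)$), the law $x\le \exists x$ from $(M_1)$, and the hoop identity $(psH_5)$.

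For item $(1)$, since $x\le \exists x$ and $\forall(x\ra y)\le \exists x\ra \exists y$, two applications of monotonicity of $\odot$ give $\forall(x\ra y)\odot x\le \forall(x\ra y)\odot \exists x\le (\exists x\ra \exists y)\odot \exists x$, and the right-hand side equals $\exists x\wedge \exists y$ by $(psH_5)$ applied to the pair $\exists x,\exists y$. Symmetrically, from $\forall(x\rs y)\le \exists x\rs \exists y$ and monotonicity in the right argument I would obtain $\exists x\odot \forall(x\rs y)\le \exists x\odot(\exists x\rs \exists y)=\exists x\wedge \exists y$, again by $(psH_5)$. (The first inequality can also be reached more directly through $(M_2)$, since $\forall(x\ra y)\odot x\le (x\ra y)\odot x=x\wedge y\le \exists x\wedge \exists y$, but the route above keeps both halves uniform.)

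For item $(2)$, the key extra ingredient is axiom $(M_6)$, which is available precisely because the structure is a monadic pseudo-hoop. Applying $(M_6)$ with $x\ra y$ in place of $x$ collapses the universal quantifier over the square, yielding $\forall((x\ra y)\odot(x\ra y))=\forall(x\ra y)\odot \forall(x\ra y)$. Then, using $\forall(x\ra y)\le \exists x\ra \exists y$ and monotonicity of $\odot$ in each argument separately, I would bound $\forall(x\ra y)\odot \forall(x\ra y)\le (\exists x\ra \exists y)\odot(\exists x\ra \exists y)$, which is the claim; the $\rs$-version is identical, starting from $\forall((x\rs y)\odot(x\rs y))=\forall(x\rs y)\odot \forall(x\rs y)$ and using $\forall(x\rs y)\le \exists x\rs \exists y$.

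I do not expect a genuine obstacle here: once the inequalities of Proposition \ref{mpsBCK-10}$(4)$ are in hand, both items reduce to monotonicity of $\odot$ and a single instance of the hoop identity $(psH_5)$ (plus $(M_6)$ for item $(2)$). The only point requiring care is the bookkeeping of the left/right asymmetry, matching the $\ra$-statement with a factor of $\odot$ on the appropriate side so that $(psH_5)$ delivers the meet $\exists x\wedge \exists y$ rather than an unrelated product; once the sides are lined up correctly, the estimates are routine.
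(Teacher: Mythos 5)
Your proposal is correct and follows essentially the same route as the paper: Proposition \ref{mpsBCK-10}$(4)$ to get $\forall(x\ra y)\le \exists x\ra \exists y$ and $\forall(x\rs y)\le \exists x\rs \exists y$, monotonicity of $\odot$ (Proposition \ref{psBE-25-10}$(3)$), the hoop identity $(psH_5)$ giving $(\exists x\ra \exists y)\odot \exists x=\exists x\wedge \exists y=\exists x\odot(\exists x\rs \exists y)$, and $(M_6)$ for item $(2)$. In fact you are slightly more careful than the paper's write-up, which bounds $\forall(x\ra y)\odot \exists x$ while the statement concerns $\forall(x\ra y)\odot x$; your explicit insertion of $x\le \exists x$ via $(M_1)$ closes that small gap.
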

\begin{proof}
$(1)$ Applying Proposition \ref{mpsBCK-10}$(4)$ we have 
$\forall(x\ra y)\odot \exists x\le (\exists x\ra \exists y)\odot \exists x=\exists x\wedge \exists y$ and 
$\exists x\odot \forall(x\rs y)\le \exists x\odot (\exists x\rs \exists y)=\exists x\wedge \exists y$. \\ 
$(2)$ By $(M_6)$ and Proposition \ref{mpsBCK-10}$(4)$ we get 
$\forall((x\ra y)\odot (x\ra y))=\forall(x\ra y)\odot \forall(x\ra y)\le 
(\exists x\ra \exists y)\odot (\exists x\ra \exists y)$. 
Similarly, $\forall((x\rs y)\odot (x\rs y))\le (\exists x\rs \exists x)\odot (\exists x\rs \exists y)$. 
\end{proof}

The universal and existential quantifers on a pseudo MV-algebra $A$ were defined and investigated in \cite{Rac1} 
as follows. 

\begin{definition} \label{pm-psBE-10} $\rm($\cite{Rac1}$\rm)$
A universal quantifier on $A$ is a mapping $\forall:A\longrightarrow A$ satisfying the following conditions, 
for all $x, y\in A:$ \\
$(MVU_1)$ $\forall x\le x;$ \\
$(MVU_2)$ $\forall(x\wedge y)=\forall x\wedge \forall y;$ \\
$(MVU_3)$ $\forall((\forall x)^{-})=(\forall x)^{-}$, $\forall((\forall x)^{\sim})=(\forall x)^{\sim};$ \\ 
$(MVU_4)$ $\forall(\forall x\odot \forall y)=\forall x\odot \forall y;$ \\
$(MVU_5)$ $\forall(x \odot x)=\forall x\odot \forall x;$ \\
$(MVU_6)$ $\forall(x \oplus x)=\forall x\oplus \forall x$. 
\end{definition}

\begin{definition} \label{pm-psBE-20} $\rm($\cite{Rac1}$\rm)$
An existential quantifier on $A$ is a mapping $\exists:A\longrightarrow A$ satisfying the following conditions, 
for all $x, y\in A:$ \\
$(MVE_1)$ $x\le \exists x;$ \\
$(MVE_2)$ $\exists(x\vee y)=\exists x\vee \exists y;$ \\
$(MVE_3)$ $\exists((\exists x)^{-})=(\exists x)^{-}$, $\exists((\exists x)^{\sim})=(\exists x)^{\sim};$ \\ 
$(MVE_4)$ $\exists(\exists x\odot \exists y)=\exists x\odot \exists y;$ \\
$(MVE_5)$ $\exists(x \odot x)=\exists x\odot \exists x;$ \\
$(MVE_6)$ $\exists(x \oplus x)=\exists x\oplus \exists x$. 
\end{definition}

According to \cite{Ior1}, the bounded commutative pseudo BCK-algebras coincide (are categorically isomorphic) with pseudo MV-algebras, and so the bounded commutative pseudo BE-algebras have also this property. 
Namely, given a bounded commutative pseudo BE-algebra $(A,\ra,\rs,0,1)$, then the structure 
$(A,\odot,\oplus,^{-},^{\sim},0,1)$ defined by $x^{-}=x\ra 0$, $x^{\sim}=x\rs 0$, 
$x\odot y=(x\ra y^{-})^{\sim}=(y\rs x^{\sim})^{-}$, $x\oplus y=(y^{-}\odot x^{-})^{\sim}=(y^{\sim}\odot x^{\sim})^{-}$, is a pseudo MV-algebra. 
Conversely, given a pseudo MV-algebra $(A,\odot,\oplus,^{-},^{\sim},0,1)$, then the structure $(A,\ra,\rs,0,1)$ defined by $x\ra y=y\oplus x^{-}=(x\odot y^{\sim})^{-}$, $x\rs y=x^{\sim}\oplus y=(y^{-}\odot x)^{\sim}$ is a bounded 
commutative pseudo BE-algebra. \\
Moreover, all properties proved in this section for the case of monadic pseudo BCK(pP)-algebras and monadic bounded pseudo BCK-semilattices are also valid for monadic bounded commutative pseudo BE-algebras.   

\begin{proposition} \label{pm-psBE-30} Let $A$ be a bounded commutative pseudo BE-algebra. The universal quantifier 
$\forall$ defined in Theorem \ref{minv-psBE-200} is a universal quantifier on the coresponding pseudo MV-algebra. 
\end{proposition}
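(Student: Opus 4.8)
The plan is to verify that the map $\forall$ from Theorem \ref{minv-psBE-200} satisfies each of the six defining conditions $(MVU_1)$--$(MVU_6)$ of a universal quantifier on the pseudo MV-algebra associated with $A$ (Definition \ref{pm-psBE-10}). The crucial preliminary observation is that, by Theorem \ref{minv-psBE-200} itself, the pair $(\exists,\forall)$ built from $\tau$ turns $(A,\exists,\forall)$ into a monadic bounded commutative pseudo BE-algebra; hence every property established in the preceding sections for such structures is available. Moreover, by the facts recalled from \cite{Ior1}, a bounded commutative pseudo BE-algebra is simultaneously an involutive pseudo BCK(pP)-algebra and a lattice, so the class-specific propositions (those phrased for meet-semilattices and for the (pP) condition) legitimately apply to $A$.

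With the ambient structure pinned down, I would match the conditions one at a time. Condition $(MVU_1)$, namely $\forall x\le x$, is immediate from axiom $(M_2)$. Condition $(MVU_2)$, $\forall(x\wedge y)=\forall x\wedge \forall y$, is exactly Proposition \ref{mpsBE-60}$(3)$, valid because $A$ is a (monadic) pseudo BCK-meet-semilattice. Condition $(MVU_3)$, $\forall((\forall x)^{-})=(\forall x)^{-}$ and $\forall((\forall x)^{\sim})=(\forall x)^{\sim}$, is precisely Proposition \ref{mpsBE-50}$(7)$. Condition $(MVU_4)$, $\forall(\forall x\odot \forall y)=\forall x\odot \forall y$, is Proposition \ref{mpp-BCK-20}$(3)$, which applies since $A$ satisfies the (pP) condition. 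Finally, $(MVU_5)$ and $(MVU_6)$ are literally the axioms $(M_6)$ and $(M_7)$ of Definition \ref{minv-psBE-40}.

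Since each of these six steps reduces to a previously proved identity rather than a fresh computation, the argument is essentially bookkeeping. The only point requiring care is the justification of the ambient classifications: one must explicitly invoke the recalled results that a bounded commutative pseudo BE-algebra is an involutive pseudo BCK(pP)-algebra whose order forms a lattice, so that Propositions \ref{mpsBE-60} and \ref{mpp-BCK-20}---stated respectively for meet-semilattices and for the (pP) condition---are genuinely applicable. I expect this verification of the hypotheses, rather than any individual algebraic equality, to be the main (and only mild) obstacle.
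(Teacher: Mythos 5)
Your proposal is correct and follows essentially the same route as the paper: both verify $(MVU_1)$--$(MVU_6)$ by citing, respectively, the pointwise inequality $\forall x\le x$, Proposition \ref{mpsBE-60}$(3)$, Proposition \ref{mpsBE-50}$(7)$, Proposition \ref{mpp-BCK-20}$(3)$, and the axioms $(M_6)$, $(M_7)$. Your extra care in checking that $A$, being bounded commutative, is an involutive pseudo BCK(pP)-algebra and a lattice (so those propositions apply) is exactly the justification the paper handles by its earlier remark that results for monadic pseudo BCK(pP)-algebras and pseudo BCK-semilattices carry over to monadic bounded commutative pseudo BE-algebras.
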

\begin{proof}
With the quantifiers $\forall$ and $\exists$ defined in Theorem \ref{minv-psBE-200}, $(A,\exists,\forall)$ is a 
monadic bounded commutative pseudo BE-algebra, so it satisfies the axioms $(M_1)$-$(M_7)$. 
Obviously, $(U_1)$ implies $\forall x\le x$, for all $x\in A$, that is $(MVU_1)$. 
Axiom $(MVU_2)$ follows from Proposition \ref{mpsBE-60}$(3)$, axiom $(MVU_3)$ from 
Proposition \ref{mpsBE-50}$(7)$ and axiom $(MVU_4)$ from Proposition \ref{mpp-BCK-20}$(3)$. 
Since axioms $(MVU_5)$ and $(MVU_6)$ are in fact $(M_6)$ and $(M_7)$, we conclude that $\forall$ is a universal  
quantifier on the pseudo MV-algebra $A$. 
\end{proof}

\begin{proposition} \label{pm-psBE-40} Let $A$ be a bounded commutative pseudo BE-algebra. 
The existential quantifier $\exists$ defined in Theorem \ref{minv-psBE-210} is an existential quantifier on the coresponding pseudo MV-algebra. 
\end{proposition}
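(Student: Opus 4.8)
The plan is to mirror the proof of Proposition \ref{pm-psBE-30}, verifying the six axioms $(MVE_1)$--$(MVE_6)$ of Definition \ref{pm-psBE-20} for the map $\exists$ constructed in Theorem \ref{minv-psBE-210}. First I would record the structural facts that make the earlier results available: by Theorem \ref{minv-psBE-210} the pair $(A,\exists,\forall)$ is a monadic bounded commutative pseudo BE-algebra, so all of $(M_1)$--$(M_7)$ hold; moreover $A$ is an involutive pseudo BCK-algebra satisfying the (pP) condition whose order is a lattice. Consequently $A$ is simultaneously a monadic pseudo BCK(pP)-algebra and a monadic pseudo BCK-join-semilattice, and the propositions of the preceding sections apply verbatim to $\exists=\sigma$.

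The four routine axioms then fall out directly. Axiom $(MVE_1)$ is exactly $(E_1)$, since $\exists x=\sigma x$. Axiom $(MVE_2)$, namely $\exists(x\vee y)=\exists x\vee \exists y$, is Proposition \ref{mpsBE-70}$(3)$, which is applicable because $A$ is a join-semilattice. Axiom $(MVE_3)$, i.e.\ $\exists((\exists x)^{-})=(\exists x)^{-}$ and $\exists((\exists x)^{\sim})=(\exists x)^{\sim}$, is Proposition \ref{mpsBE-50}$(8)$. Axiom $(MVE_4)$, $\exists(\exists x\odot \exists y)=\exists x\odot \exists y$, is Proposition \ref{mpp-BCK-20}$(1)$.

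The one place where the argument genuinely departs from the universal case is $(MVE_5)$ and $(MVE_6)$, and this is where I expect the only real subtlety. In Proposition \ref{pm-psBE-30} the corresponding axioms $(MVU_5),(MVU_6)$ were literally the defining identities $(M_6),(M_7)$; here, however, $(M_6)$ and $(M_7)$ are statements about $\forall$, whereas $(MVE_5),(MVE_6)$ speak about $\exists$, so they cannot be read off verbatim. The key is to transfer them through the $\odot/\oplus$ duality recorded in Proposition \ref{minv-psBE-70}. Setting $y:=x$ in part $(3)$ shows that $\forall(x\oplus x)=\forall x\oplus \forall x$ is equivalent to $\exists(x\odot x)=\exists x\odot \exists x$; since the left-hand identity is $(M_7)$, this yields $(MVE_5)$. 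Dually, setting $y:=x$ in part $(2)$ shows that $\forall(x\odot x)=\forall x\odot \forall x$ (which is $(M_6)$) is equivalent to $\exists(x\oplus x)=\exists x\oplus \exists x$, giving $(MVE_6)$. Having established all six axioms, I conclude that $\exists$ is an existential quantifier on the corresponding pseudo MV-algebra. The main obstacle is thus essentially bookkeeping: recognizing that $(MVE_5)$ and $(MVE_6)$ must be extracted from $(M_6)$ and $(M_7)$ via Proposition \ref{minv-psBE-70} rather than identified with them directly.
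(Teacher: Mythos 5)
Your proof is correct and takes essentially the same approach as the paper: both verify $(MVE_1)$--$(MVE_6)$ for $\exists=\sigma$ by citing $(E_1)$, Proposition \ref{mpsBE-50}$(8)$, Proposition \ref{mpp-BCK-20}$(1)$, and the $\odot/\oplus$ duality of Proposition \ref{minv-psBE-70} (your direct appeal to Proposition \ref{mpsBE-70}$(3)$ for $(MVE_2)$ is an inessential variant of the paper's route through Proposition \ref{minv-psBE-70}$(1)$ and $(MVU_2)$). Indeed, your cross-pairing --- $(M_7)$ yielding $(MVE_5)$ via part $(3)$ and $(M_6)$ yielding $(MVE_6)$ via part $(2)$ --- is the accurate reading of the duality, whereas the paper's wording pairs $(MVU_5)$ with $(MVE_5)$ and $(MVU_6)$ with $(MVE_6)$ even though Proposition \ref{minv-psBE-70} actually swaps them; the conclusion is unaffected.
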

\begin{proof}
It was proved in Theorem \ref{minv-psBE-210} that the axioms $(E_1)$-$(E_6)$ imply the axioms $(U_1)$-$(U_6)$ from 
Theorem \ref{minv-psBE-200}. Moreover, by Proposition \ref{pm-psBE-30}, axioms $(U_1)$-$(U_6)$ imply axioms $(MVU_1)$-$(MVU_6)$. 
Obviously, $(E_1)$ implies $(MVE_1)$, while by Proposition \ref{minv-psBE-70}$(1)$, $(MVU_2)$ is equivalent to $(MVE_2)$. Axiom $(MVE_3)$ follows from Proposition \ref{mpsBE-50}$(8)$, and axiom $(MVE_4)$ is a consequence of 
Proposition \ref{mpp-BCK-20}$(1)$. 
By Proposition \ref{minv-psBE-70}, axiom $(MVU_5)$ is equivalent to $(MVE_5)$, while $(MVU_6)$ is equivalent to $(MVE_6)$. It follows that $\exists$ is a existential quantifier on the pseudo MV-algebra $A$. 
\end{proof}

$\vspace*{5mm}$

\section{Monadic deductive systems of monadic pseudo BE-algebras}

The monadic deductive systems and monadic congruences of monadic pseudo BE-algebras are defined and their
properties are studied. It is proved that, in the case of a monadic distributive commutative
pseudo BE-algebra there is a one-to-one correspondence between monadic congruences and
monadic deductive systems, and it is swown that the quotient of a monadic pseudo BE-algebra via a monadic deductive system is a monadic BE-algebra. For the particular case of a monadic pseudo BCK-meet-semilattice, we prove that 
there is a one-to-one correspondence between monadic relative congruences and monadic normal deductive systems. 
We also show that the quotient of a monadic pseudo BCK-meet-semilattice via a monadic normal deductive system is a monadic pseudo BCK-meet-semilattice. \\

A subset $D$ of a pseudo BE-algebra $A$ is called a \emph{deductive system} of $A$ if it satisfies 
the axioms: 
$(ds_1)$ $1\in D,$ $(ds_2)$ for all $x, y\in A$, $x\in D$ and $x\ra y\in D$ imply $y\in D$. \\
A subset $D$ of $A$ is a deductive system if and only if it satisfies $(ds_1)$ and the axiom: \\
$(ds^{'}_2)$ for all $x, y\in A$, $x\in D$ and $x\rs y\in D$ imply $y\in D$. \\
Denote by ${\mathcal DS}(A)$ the set of all deductive systems of $A$. 
If $D\in {\mathcal DS}(A)$ and $x, y\in X$ such that $x\le y$, then $y\in D$ (\cite{Bor2}). 
A deductive system $D$ of $A$ is \emph{proper} if $D\ne A$. 
A deductive system $D$ of a pseudo BE-algebra $A$ is said to be \emph{normal} if it satisfies the condition:\\
$(ds_3)$ for all $x, y \in A$, $x \ra y \in D$ iff $x \rs y \in D$. \\
Denote by ${\mathcal DS_n}(A)$ the set of all normal deductive systems of $A$. 
If $A$ is a distributive pseudo BE-algebra, then ${\mathcal DS}(A)={\mathcal DS_n}(A)$ (\cite{Bor4}). 
If $X\subseteq A$, denote $[X)=\cap \{D\in {\mathcal DS}(A) \mid X\subseteq D\}$ the smallest deductive 
system of $A$ containing $X$ and it is called the deductive system \emph{generated} by $X$. If $X=\{x\}$ 
we write $[x)$ instead of $[\{x\})$.  
For details regarding deductive systems and congruence relations on a pseudo BE-algebra we refer the reader to  \cite{Bor2, Rez1}. 

\begin{definition} \label{mds-psBE-10} Let $(A,\exists,\forall)$ be a monadic pseudo BE-algebra and let 
$D\in {\mathcal DS}(A)$. Then $D$ is called a \emph{monadic deductive system} of $(A,\exists,\forall)$ 
if $x\in D$ implies $\forall x\in D$. 
\end{definition}

Denote by ${\mathcal MDS}(A,\exists,\forall)$ the set of all monadic deductive systems of $(A,\exists,\forall)$. 
Obviously, $\{1\}, A\in {\mathcal MDS}(A,\exists,\forall)$. 

\begin{examples} \label{mds-psBE-20}
$(1)$ Consider the pseudo BE-algebra $A$ and ists monadic operators from Example \ref{mpsBE-10-30}.  
We can see that ${\mathcal DS}(A)$=\{\{1\}, \{1,a,d\}, \{1,b,c\}, A\} and 
${\mathcal MDS}(A,\exists_1,\forall_1)={\mathcal MDS}(A,\exists_2,\forall_2)={\mathcal MDS}(A,\exists_3,\forall_3)=
{\mathcal MDS}(A,\exists_4,\forall_4)={\mathcal DS}(A)$. \\
$(2)$ In the Example \ref{minv-psBE-40-10} we have ${\mathcal DS}(A)=\{\{1\}, \{1,a\}, \{1,b\}, A\}$, 
${\mathcal MDS}(A,\exists_1,\forall_1)={\mathcal DS}(A)$ and 
${\mathcal MDS}(A,\exists_2,\forall_2)=\{\{1,\}, A\}$. 
\end{examples}

\begin{proposition} \label{mds-psBE-30} If $A$ is a pseudo BE(M)-algebra and $\emptyset \ne X\subseteq A$, then \\
$\hspace*{0.5cm}$  $[X)=\{x\in A \mid a_1\ra (a_2\ra \cdots \ra (a_n\ra x)\cdots )=1$, 
                            for some $n\ge 1$ and $a_1,\cdots, a_n\in X\}$ \\
$\hspace*{1.25cm}$ $   =\{x\in A \mid a_1\rs (a_2\ra\cdots \rs (a_n\rs x)\cdots )=1$, 
                            for some $n\ge 1$ and $a_1,\cdots, a_n\in X\}$.   
\end{proposition}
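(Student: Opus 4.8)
The plan is to write $S_{\ra}$ and $S_{\rs}$ for the two candidate sets on the right, and to prove $[X)=S_{\ra}$ and $[X)=S_{\rs}$ separately. For a finite tuple $\vec a=(a_1,\dots,a_n)$ over $X$ and $z\in A$ it is convenient to abbreviate $p_{\ra}(\vec a,z)=a_1\ra(a_2\ra\cdots\ra(a_n\ra z)\cdots)$ and likewise $p_{\rs}(\vec a,z)$ with every $\ra$ replaced by $\rs$, so that $S_{\ra}=\{z\mid p_{\ra}(\vec a,z)=1 \text{ for some }\vec a\}$ and similarly for $S_{\rs}$. Since $[X)$ is by definition the least deductive system containing $X$, for each set it suffices to show (a) that it is a deductive system containing $X$, and (b) that it is contained in every deductive system $D$ with $X\subseteq D$. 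I will carry this out for $S_{\ra}$ and then indicate how the roles of $\ra$ and $\rs$ interchange for $S_{\rs}$.

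I would dispatch the routine parts first. Taking $n=1$ gives $a\ra a=1$ by $(psBE_1)$, so $X\subseteq S_{\ra}$, and $a\ra 1=1$ by $(psBE_2)$, so $1\in S_{\ra}$; here the nonemptiness of $X$ is used. For (b), if $D$ is a deductive system with $X\subseteq D$ and $x\in S_{\ra}$ with $p_{\ra}(\vec a,x)=1\in D$ and all $a_i\in X\subseteq D$, then stripping off $a_1$, then $a_2$, and so on by repeated application of $(ds_2)$ (each $a_i$ lies in $D$) yields $x\in D$. Hence $S_{\ra}\subseteq D$, and with $D=[X)$ this gives $S_{\ra}\subseteq[X)$.

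The substantive step is closure under detachment, and this is where (M) and the pseudo BE-identities enter. I would verify the \emph{equivalent} axiom $(ds'_2)$ rather than $(ds_2)$, the point being that the $\rs$ occurring in $(ds'_2)$ is exactly what the exchange law $(psBE_4)$ can move through a block of $\ra$'s (there is no $\ra$--$\ra$ exchange, so a direct check of $(ds_2)$ would fail). Concretely, assume $x\in S_{\ra}$ with witness $\vec a=(a_1,\dots,a_m)$ and $x\rs y\in S_{\ra}$ with witness $\vec b=(b_1,\dots,b_k)$. By induction on $k$, applying $(psBE_4)$ in the form $b\ra(u\rs v)=u\rs(b\ra v)$ at each stage, one proves
\[
p_{\ra}(\vec b,\,x\rs y)\;=\;x\rs p_{\ra}(\vec b,\,y).
\]
Since the left side is $1$, so is the right side, i.e. $x\le p_{\ra}(\vec b,y)$ (using that $\le$ is expressible through $\rs$). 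Now $m$-fold application of (M) shows that $z\mapsto p_{\ra}(\vec a,z)$ is isotone; applying it to $x\le p_{\ra}(\vec b,y)$ and recalling $p_{\ra}(\vec a,x)=1$ gives $1\le p_{\ra}(\vec a,p_{\ra}(\vec b,y))$, hence $p_{\ra}(\vec a,p_{\ra}(\vec b,y))=1$ because $1\le w$ forces $w=1$ by $(psBE_3)$. As $p_{\ra}(\vec a,p_{\ra}(\vec b,y))=p_{\ra}\bigl((a_1,\dots,a_m,b_1,\dots,b_k),y\bigr)$ with all entries in $X$, this exhibits $y\in S_{\ra}$. Thus $S_{\ra}$ is a deductive system containing $X$, so $[X)\subseteq S_{\ra}$, and with (b) we get $[X)=S_{\ra}$.

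Finally I would run the identical three steps for $S_{\rs}$, interchanging $\ra$ and $\rs$ everywhere: verify $(ds_2)$ in place of $(ds'_2)$, and use Proposition \ref{psBE-40}$(1)$ (the law $b\rs(u\ra v)=u\ra(b\rs v)$) in place of $(psBE_4)$ to obtain $p_{\rs}(\vec b,\,x\ra y)=x\ra p_{\rs}(\vec b,\,y)$; the same monotonicity argument via the $\rs$-part of (M) then yields $[X)=S_{\rs}$, whence $S_{\ra}=[X)=S_{\rs}$. I expect the only genuine obstacle to be this detachment step: the crucial realizations are that $S_{\ra}$ must be tested through the $\rs$-form $(ds'_2)$ so that the exchange identity $(psBE_4)$ can be iterated, and that the (M) hypothesis is precisely what makes the outer block $p_{\ra}(\vec a,-)$ monotone, allowing the two witnesses to be concatenated; notably, transitivity of $\le$ is never needed.
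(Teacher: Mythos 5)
Your proposal is correct, and it is essentially the argument the paper intends: the paper's proof is only the remark ``Similarly as \cite[Lemma 1.9]{Ciu2}, based on $(psBE_4)$ and property (M)'', and your write-up fills in exactly that route --- testing $S_{\ra}$ via the $\rs$-form $(ds'_2)$ so that the exchange law $(psBE_4)$ (resp.\ Proposition \ref{psBE-40}$(1)$ for $S_{\rs}$) can be iterated through the block of implications, then using (M) to make $p_{\ra}(\vec a,-)$ isotone and concatenating witnesses. Your observations that a direct check of $(ds_2)$ for $S_{\ra}$ would fail for lack of a $\ra$--$\ra$ exchange, and that transitivity of $\le$ is never needed (only $1\le w\Rightarrow w=1$ via $(psBE_3)$), are accurate and match why the hypothesis is (M) rather than (T).
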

\begin{proof}

Similarly as \cite[Lemma 1.9]{Ciu2}, based on $(psBE_4)$ and property (M). 
\end{proof}

\begin{remark} \label{mds-psBE-30-10}
$(1)$ According to \cite[Th. 6]{Bor4}, any distributive pseudo BE-algebra satisfies property (M). 
With the assumption of distributivity, in \cite[Prop. 8]{Bor4} it is proved the same result as the one proved in Proposition \ref{mds-psBE-30}. \\
$(2)$ If $A$ is a commutative pseudo BE-algebra, then it is a commutative pseudo BCK-algebra, so that 
property (M) is satisfied. It follows that the above result is also valid in the case of commutative 
pseudo BE-algebras. 
\end{remark}

\begin{proposition} \label{mds-psBE-40} Let $(A,\exists,\forall)$ be a monadic pseudo BE(M)-algebra and let 
$D\in {\mathcal DS}(A)$. Then $D\in {\mathcal MDS}(A,\exists,\forall)$ if and only if $D=[D\cap A_{\exists\forall})$.
\end{proposition}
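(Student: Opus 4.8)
The plan is to prove the two implications separately, leaning on the explicit description of the generated deductive system furnished by Proposition \ref{mds-psBE-30} and on one special consequence of the monadic axioms. The key observation I would isolate first is that for a fixed element $a\in A_{\exists\forall}$ (so $a=\exists a$), axiom $(M_4)$ specializes to $\forall(a\ra y)=a\ra \forall y$ for all $y\in A$, and likewise $\forall(a\rs y)=a\rs \forall y$. I would also record that $\forall x\in A_{\exists\forall}$ for every $x$, since $(M_5)$ gives $\forall x=\exists\forall x$. These two facts, together with $\forall x\le x$ from $(M_2)$ and $\forall 1=1$ from Proposition \ref{mpsBE-20}$(2)$, are all the ingredients needed.

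For the forward implication, I assume $D\in {\mathcal MDS}(A,\exists,\forall)$. The inclusion $[D\cap A_{\exists\forall})\subseteq D$ is immediate: $D\cap A_{\exists\forall}\subseteq D$ and $D$ is a deductive system, so by minimality of the generated deductive system we get $[D\cap A_{\exists\forall})\subseteq D$. For the reverse inclusion, I take $x\in D$; monadicity of $D$ yields $\forall x\in D$, and by the observation above $\forall x\in A_{\exists\forall}$, hence $\forall x\in D\cap A_{\exists\forall}$. Since $\forall x\ra x=1$ by $(M_2)$, Proposition \ref{mds-psBE-30} applied with the single generator $\forall x$ (i.e.\ $n=1$) gives $x\in [D\cap A_{\exists\forall})$. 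Thus $D\subseteq [D\cap A_{\exists\forall})$ and the two sets coincide.

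For the converse, I assume $D=[D\cap A_{\exists\forall})$; since $D$ is already a deductive system by hypothesis, it suffices to verify that $x\in D$ implies $\forall x\in D$. Given $x\in D$, Proposition \ref{mds-psBE-30} provides $a_1,\dots,a_n\in D\cap A_{\exists\forall}$ with $a_1\ra(a_2\ra\cdots\ra(a_n\ra x)\cdots)=1$. I then apply $\forall$ to both sides: using $\forall 1=1$ on the right, and on the left sliding $\forall$ inward one implication at a time via the specialized $(M_4)$ (legitimate precisely because each $a_i=\exists a_i$), I obtain $a_1\ra(a_2\ra\cdots\ra(a_n\ra \forall x)\cdots)=1$. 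This exhibits $\forall x\in [D\cap A_{\exists\forall})=D$, as required.

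The main obstacle, and the only step deserving care, is this last inductive ``sliding'' of $\forall$ through the nested implication, which I would formalize by induction on the number $n$ of generators: the base case $n=1$ is exactly $\forall(a_1\ra \forall x)=a_1\ra\forall x$ type reasoning, and the inductive step peels off $a_1$ using $\forall(a_1\ra y)=a_1\ra\forall y$ with $y=a_2\ra\cdots\ra(a_n\ra x)$. It is here that the restriction to the \emph{fixed} generators of $D\cap A_{\exists\forall}$ is indispensable, since $(M_4)$ has no content for a generator that is not a value of $\exists$. The same argument with $\rs$ in place of $\ra$ covers the alternative description of $[X)$ in Proposition \ref{mds-psBE-30}.
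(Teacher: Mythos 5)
Your proposal is correct and follows essentially the same route as the paper's proof: the forward direction via $\forall x\in D\cap A_{\exists\forall}$ together with $\forall x\ra x=1$ and Proposition \ref{mds-psBE-30}, and the converse by writing $a_i=\exists a_i$ and sliding $\forall$ through the nested implication using $(M_4)$ and $\forall 1=1$. Your only addition is to make the ``sliding'' step an explicit induction on $n$, which the paper leaves as an informal $=\cdots=$ chain.
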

\begin{proof}
Let $D\in {\mathcal MDS}(A,\exists,\forall)$. Obviously, $[D\cap A_{\exists\forall})\subseteq D$.
If $x\in D$, then $\forall x\in D$. Since $\forall\forall x=\forall x$, we have $\forall x\in A_{\exists\forall}$, 
so $\forall x\in D\cap A_{\exists\forall}$. 
Moreover, $\forall x\ra x=1$, hence, by Proposition \ref{mds-psBE-30}, $x\in [D\cap A_{\exists\forall})$. 
It follows that $D\subseteq [D\cap A_{\exists\forall})$, thus $D=[D\cap A_{\exists\forall})$. 
Conversely, let $D\in {\mathcal DS}(A)$ such that $D=[D\cap A_{\exists\forall})$. 
According to Proposition \ref{mds-psBE-30}, for each $x\in D$, there exist $n\ge 1$ and 
$a_1,\cdots,a_n\in D\cap A_{\exists\forall}$ such that $a_1\ra (a_2\ra\cdots \ra (a_n\ra x)\cdots )=1$. 
Since $a_1,\cdots,a_n\in A_{\exists\forall}$, we have $a_1=\forall a_1=\exists a_1$, ... , 
$a_n=\forall a_n=\exists a_n$. By $(M_2)$ and $(M_4)$ we get: \\
$\hspace*{2.0cm}$ $1=\forall 1=\forall(a_1\ra (a_2\ra\cdots \ra (a_n\ra x)\cdots ))$ \\ 
$\hspace*{2.3cm}$ $=\forall(\exists a_1\ra (\exists a_2\ra\cdots \ra (\exists a_n\ra x)\cdots ))$ \\ 
$\hspace*{2.3cm}$ $=\exists a_1\ra \forall(\exists a_2\ra\cdots \ra (\exists a_n\ra x)\cdots )=\cdots$ \\
$\hspace*{2.3cm}$ $=\exists a_1\ra (\exists a_2\ra\cdots \ra (\exists a_n\ra \forall x)\cdots )$ \\
$\hspace*{2.3cm}$ $=a_1\ra (a_2\ra\cdots \ra (a_n\ra \forall x)\cdots )$. \\
Applying again Proposition \ref{mds-psBE-30}, it follows that $\forall x\in D$, so 
$D\in {\mathcal MDS}(A,\exists,\forall)$.   
\end{proof}

\begin{example} \label{mds-psBE-50} The pseudo BE-algebra $A$ from Example \ref{mpsBE-10-30} is distributive 
(\cite[Ex. 2]{Bor4}). Let $D=\{1,a,d\}\in {\mathcal DS}(A)$ from Example \ref{mpsBE-100}.   
Then we have $[D\cap A_{\exists_4\forall_4})=[\{1,d\})=\{1,a,d\}\in {\mathcal MDS}(A,\exists_4,\forall_4)$.
\end{example}

Let $A$ be a pseudo BE-algebra. An equivalence relation $\Theta$ on $A$ is called a congruence relation on $A$ if,  
for any $(x,y), (u,v)\in \Theta$ we have $(x\ra u,y\ra v), (x\rs u,y\rs v)\in \Theta$.  
If $a\in A$, then we denote $[a]_{\Theta}=\{x\in A \mid (x,a)\in \Theta\}$.  
Denote by ${\mathcal CON}(A)$ the set of all congruence relations on $A$. 

\begin{lemma} \label{mds-psBE-60} Let $A$ be a pseudo BE-algebra and let $\Theta \in {\mathcal CON}(A)$. 
Then the following hold, for all $x, y\in A:$ \\
$(1)$ $x\ra y, y\ra x\in [1]_{\Theta};$ \\
$(2)$ $x\rs y, y\rs x\in [1]_{\Theta};$ \\
$(3)$ $x\ra y\in [1]_{\Theta}$ iff $x\rs y\in [1]_{\Theta};$ \\
If $A$ is commutative, then: \\
$(4)$ $x\ra y, y\ra x\in [1]_{\Theta}$ implies $(x,y)\in \Theta;$ \\
$(5)$ $x\rs y, y\rs x\in [1]_{\Theta}$ implies $(x,y)\in \Theta$. 
\end{lemma}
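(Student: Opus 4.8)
The plan is to run everything off the three defining properties of a congruence (reflexivity, symmetry, and compatibility with $\ra$ and $\rs$), together with $(psBE_1)$, $(psBE_3)$, $(psBE_5)$ and Proposition~\ref{psBE-40}$(4)$; parts $(4)$ and $(5)$ will additionally use commutativity. For $(1)$ and $(2)$ (read as: $(x,y)\in\Theta$ implies the stated memberships), I would start from $(x,y)\in\Theta$ and pair it with the reflexive pair $(y,y)\in\Theta$; compatibility of $\ra$ gives $(x\ra y,\,y\ra y)\in\Theta$, and since $y\ra y=1$ by $(psBE_1)$ this is exactly $x\ra y\in[1]_{\Theta}$. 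Applying symmetry to obtain $(y,x)\in\Theta$ and repeating against $(x,x)\in\Theta$ yields $y\ra x\in[1]_{\Theta}$, and the same two steps with $\rs$ in place of $\ra$ (again using $(psBE_1)$) give $(2)$.

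Part $(3)$ is the crux and the one place where the obvious route fails: one cannot pass to $A/\Theta$ and invoke $(psBE_5)$, because the claim that the quotient satisfies $(psBE_5)$ is precisely $(3)$. I would therefore argue directly. Assume $x\ra y\in[1]_{\Theta}$, i.e. $(x\ra y,1)\in\Theta$; combining this with $(y,y)\in\Theta$ through compatibility of $\rs$ and then $(psBE_3)$ (since $1\rs y=y$) gives $(w,y)\in\Theta$ where $w:=(x\ra y)\rs y$. Now Proposition~\ref{psBE-40}$(4)$ says $x\ra w=1$, hence $x\le w$ and so $x\rs w=1$ by $(psBE_5)$; feeding $(x,x)\in\Theta$ and $(w,y)\in\Theta$ into compatibility of $\rs$ produces $(x\rs w,\,x\rs y)\in\Theta$, that is $(1,x\rs y)\in\Theta$, so $x\rs y\in[1]_{\Theta}$. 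The reverse implication is the mirror argument, using the second identity $x\rs((x\rs y)\ra y)=1$ of Proposition~\ref{psBE-40}$(4)$ in place of the first.

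For $(4)$ and $(5)$ I would use the commutativity identities of Definition~\ref{mcomm-psBE-10}. Assuming $x\ra y,\,y\ra x\in[1]_{\Theta}$, compatibility of $\rs$ against $(y,y)$ and against $(x,x)$, followed by $(psBE_3)$, yields $((x\ra y)\rs y,\,y)\in\Theta$ and $((y\ra x)\rs x,\,x)\in\Theta$; but $(x\ra y)\rs y=(y\ra x)\rs x$ by commutativity, so $x$ and $y$ are each $\Theta$-related to this one common element and transitivity gives $(x,y)\in\Theta$. Part $(5)$ is identical with the roles of $\ra$ and $\rs$ interchanged, now invoking the second commutativity identity $(x\rs y)\ra y=(y\rs x)\ra x$. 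The only genuine obstacle is $(3)$: once the circular quotient argument is discarded, the explicit computation above, pivoting on Proposition~\ref{psBE-40}$(4)$ to manufacture the auxiliary element $w$ with $x\le w$ and $(w,y)\in\Theta$, is what makes the equivalence go through.
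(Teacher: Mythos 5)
Your proof is correct and follows essentially the same route as the paper: the same $\rs$-compatibility computations for $(1)$--$(2)$, the same pivot on the auxiliary element $(x\ra y)\rs y$ via Proposition~\ref{psBE-40}$(4)$ for $(3)$, and the same use of the commutativity identities together with transitivity for $(4)$--$(5)$. If anything, your explicit appeal to $(psBE_5)$ to pass from $x\ra w=1$ to $x\rs w=1$ makes precise a small step the paper leaves implicit when it cites Proposition~\ref{psBE-40}$(4)$ directly for $x\rs((x\ra y)\rs y)=1$.
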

\begin{proof}
$(1)$ Let $(x,y)\in \Theta$. Since $\Theta$ is reflexive, $(x,x), (y,y)\in \Theta$. 
From $(x,y), (y,y)\in \Theta$, we get $(x\ra y,y\ra y)=(x\ra y,1)\in \Theta$, that is $x\ra y\in [1]_{\Theta}$.  
Similarly, from $(y,x),(x,x)\in \Theta$, we have $y\ra x\in [1]_{\Theta}$. \\
$(2)$ Similarly as $(1)$. \\
$(3)$ Assume that $x\ra y\in [1]_{\Theta}$, that is $(x\ra y,1)\in \Theta$. Since $(y,y)\in \Theta$, then 
$((x\ra y)\rs y,1\rs y)=(x\ra y)\rs y,y)\in \Theta$. 
Now, from $(x,x)\in \Theta$ and $(x\ra y)\rs y,y)\in \Theta$ we have $(x\rs ((x\ra y)\rs y),x\rs y)\in \Theta$. 
Since by Proposition \ref{psBE-40}$(4)$, $x\rs ((x\ra y)\rs y)=1$, we get $(1,x\rs y)\in \Theta$, that is 
$x\rs y\in [1]_{\Theta}$. 
Similarly, $x\rs y\in [1]_{\Theta}$ implies $x\ra y\in [1]_{\Theta}$. \\
$(4)$ Let $x, y\in A$ such that $x\ra y, y\ra x\in [1]_{\Theta}$, that is $(x\ra y,1)\in \Theta$ and 
$(y\ra x,1)\in \Theta$. 
From $(x\ra y,1), (y,y)\in \Theta$ we get 
$((x\ra y)\rs y,1\rs y)=(x\Cup_1 y,y)\in \Theta$.   
From $(y\ra x,1), (x,x)\in \Theta$ we get 
$((y\ra x)\rs x,1\rs x)=(y\Cup_1 x,x)\in \Theta$.   
Since $x\Cup_1 y=y\Cup_1 x$, from $(x,y\Cup_1 x), (x\Cup_1 y,y)\in \Theta$, by transitivity we 
get $(x,y)\in \Theta$. \\
$(5)$ Similarly as $(4)$. 
\end{proof}

\begin{definition} \label{mds-psBE-70} Let $(A,\exists,\forall)$ be a monadic pseudo BE-algebra and let 
$\Theta \in {\mathcal CON}(A)$. Then $\Theta$ is called a \emph{monadic congruence relation} on $A$, 
if $(x,y)\in \Theta$ implies $(\forall x,\forall y)\in \Theta$, for all $x, y\in A$. 
\end{definition}

Denote by ${\mathcal MCON}(A,\exists,\forall)$ the set of all monadic congruence relations on $(A,\exists,\forall)$. 

\begin{theorem} \label{mds-psBE-80} Let $(A,\exists,\forall)$ be a monadic pseudo BE-algebra and let 
$\Theta \in {\mathcal MCON}(A,\exists,\forall)$. Then: \\
$(1)$ $[1]_{\Theta}\in {\mathcal MDS}(A,\exists,\forall);$ \\
$(2)$ if $A$ is commutative, then $(\exists x, \exists y)\in \Theta$, for all $(x,y)\in \Theta$. 
\end{theorem}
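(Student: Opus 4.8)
The plan is to treat the two parts separately, throughout exploiting that a commutative pseudo BE-algebra is a pseudo BCK-algebra (Remark \ref{mcomm-psBE-40-10}), so that $(A,\exists,\forall)$ is in particular a monadic pseudo BCK-algebra and the results of Proposition \ref{mpsBCK-10} and Lemma \ref{mds-psBE-60} are available.

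For $(1)$, I would first verify that $[1]_{\Theta}$ is an ordinary deductive system and then check its closure under $\forall$. Axiom $(ds_1)$ is immediate from reflexivity of $\Theta$, which gives $1\in [1]_{\Theta}$. For $(ds_2)$, assume $x\in [1]_{\Theta}$ and $x\ra y\in [1]_{\Theta}$, that is $(x,1),(x\ra y,1)\in \Theta$; applying the congruence property to $(x,1)$ and the reflexive pair $(y,y)$ yields $(x\ra y,1\ra y)=(x\ra y,y)\in \Theta$, and combining this with $(x\ra y,1)\in \Theta$ by symmetry and transitivity gives $(y,1)\in \Theta$, i.e. $y\in [1]_{\Theta}$. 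Finally, for the monadic requirement, if $x\in [1]_{\Theta}$ then $(x,1)\in \Theta$, and since $\Theta$ is a monadic congruence we obtain $(\forall x,\forall 1)\in \Theta$; as $\forall 1=1$ by Proposition \ref{mpsBE-20}$(2)$, this reads $\forall x\in [1]_{\Theta}$, which is exactly the defining condition of a monadic deductive system.

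For $(2)$, the key idea is to route the argument through $[1]_{\Theta}$, because the monadic congruence hypothesis directly controls $\forall$ rather than $\exists$. Using commutativity, it suffices by Lemma \ref{mds-psBE-60}$(4)$ to show that both $\exists x\ra \exists y$ and $\exists y\ra \exists x$ lie in $[1]_{\Theta}$. From $(x,y)\in \Theta$, Lemma \ref{mds-psBE-60}$(1)$ gives $x\ra y,\,y\ra x\in [1]_{\Theta}$; since $[1]_{\Theta}\in {\mathcal MDS}(A,\exists,\forall)$ by part $(1)$, it is closed under $\forall$, so $\forall(x\ra y),\,\forall(y\ra x)\in [1]_{\Theta}$. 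Now Proposition \ref{mpsBCK-10}$(4)$ yields $\forall(x\ra y)\le \exists x\ra \exists y$ and $\forall(y\ra x)\le \exists y\ra \exists x$; because deductive systems are upward closed (if $a\in D$ and $a\le b$ then $a\ra b=1\in D$, so $b\in D$ by $(ds_2)$), we conclude $\exists x\ra \exists y,\,\exists y\ra \exists x\in [1]_{\Theta}$. A final application of Lemma \ref{mds-psBE-60}$(4)$ then gives $(\exists x,\exists y)\in \Theta$.

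The main obstacle is precisely this passage from $\forall$ to $\exists$ in part $(2)$: the hypothesis is phrased for $\forall$, so the proof must transfer membership in $[1]_{\Theta}$ across the inequality $\forall(x\ra y)\le \exists x\ra \exists y$ of Proposition \ref{mpsBCK-10}$(4)$ and then recover an equivalence-class statement via the antisymmetry-type Lemma \ref{mds-psBE-60}$(4)$, whose validity depends essentially on commutativity. I would double-check that each invoked pseudo BCK lemma genuinely applies, which is guaranteed here since commutativity turns $A$ into a pseudo BCK-algebra.
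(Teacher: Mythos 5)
Your proposal is correct and follows essentially the same route as the paper: part $(1)$ is verbatim the paper's argument, and part $(2)$ uses the identical strategy of passing through $[1]_{\Theta}$, invoking $\forall$-closure from part $(1)$, the inequality $\forall(x\ra y)\le \exists x\ra \exists y$, upward closure of deductive systems, and Lemma \ref{mds-psBE-60}$(4)$. The only (inessential) difference is that you obtain the key inequality by citing Proposition \ref{mpsBCK-10}$(4)$, whereas the paper re-derives it on the spot from property (M), the isotonicity of $\forall$ (Corollary \ref{mpsBE-25-10}), and axiom $(M_3)$ --- the same ingredients underlying Proposition \ref{mpsBCK-10}$(4)$ itself.
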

\begin{proof}
$(1)$ Obviuosly, $1\in {\mathcal MDS}(A,\exists,\forall)$. 
Let $x, y\in A$ such that $x, x\ra y\in {\mathcal MDS}(A,\exists,\forall)$, that is $(x,1), (x\ra y,1)\in \Theta$. 
From $(x,1)\in \Theta$ and $(y,y)\in \Theta$ we get $(x\ra y,1\ra y)=(x\ra y,y)\in \Theta$. 
Now, from $(1,x\ra y), (x\ra y,y)\in \Theta$, by transitivity, we have $(1,y)\in \Theta$, that is 
$y\in [1]_{\Theta}$. Hence $[1]_{\Theta}\in {\mathcal DS}(A)$. 
Let $x\in [1]_{\Theta}$, so $(x,1)\in \Theta$. Since $\Theta \in {\mathcal MCON}(A,\exists,\forall)$, we get  
$(\forall x,\forall 1)=(\forall x,1)\in \Theta$, that is $\forall x\in [1]_{\Theta}$. 
It follows that $[1]_{\Theta}\in {\mathcal MDS}(A,\exists,\forall)$. \\
$(2)$ Since $A$ is commutative, then $A$ is a pseudo BCK-algebra, so $A$ has property (M) and 
$x\Cup_1 y=y\Cup_1 x$ and $x\Cup_2 y=y\Cup_2 x$, for all $x, y\in A$. 
Let $(x,y)\in \Theta$. By Lemma \ref{mds-psBE-60}$(1)$ we have $x\ra y, y\ra x\in [1]_{\Theta}$. 
Since by $(1)$, $[1]_{\Theta}\in {\mathcal MDS}(A,\exists,\forall)$, we get 
$\forall(x\ra y),\forall(y\ra x)\in [1]_{\Theta}$. 
By $(M_1)$, $y\le \exists y$, and applying property $(M)$ we get $x\ra y\le x\ra \exists y$. 
Hence, by Corollary \ref{mpsBE-25-10} and $(M_3)$ we have 
$\forall(x\ra y)\le \forall(x\ra \exists x)=\exists x\ra \exists y$. 
Since $[1]_{\Theta}\in {\mathcal DS}(A)$, we get $\exists x\ra \exists y\in [1]_{\Theta}$.
Similarly, $\exists y\ra \exists x\in [1]_{\Theta}$. 
Applying Lemma \ref{mds-psBE-60}$(4)$, we conclude that $(\exists x, \exists y)\in \Theta$. 
\end{proof}

\begin{remark} \label{mds-psBE-80-10}
Let $A$ be a distributive pseudo BE-algebra. 
Given $D\in {\mathcal DS}(A)$, the relation $\Theta_D$ on $A$ defined by $(x,y)\in \Theta_D$ iff 
$x\ra y\in D$ and $y\ra x\in D$ is a congruence on $D$. 
We write $x/D=[x]_{\Theta_D}$ for every $x\in A$ and we have $D=[1]_{\Theta_D}$.  
Then $(A/\Theta_H,\ra,[1]_{\Theta_D})=(A/\Theta_D,\rs,[1]_{\Theta_D})$ is a 
BE-algebra called the \emph{quotient pseudo BE-algebra via $D$} and denoted by $A/D$ (\cite[Th. 4.2]{Rez1}). 
\end{remark}

\begin{theorem} \label{mds-psBE-90} Let $(A,\exists,\forall)$ be a monadic distributive commutative pseudo BE-algebra. 
Then there is a one-to-one correspondence between ${\mathcal MCON}(A,\exists,\forall)$ and 
${\mathcal MDS}(A,\exists,\forall)$. 
\end{theorem}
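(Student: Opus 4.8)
The plan is to exhibit the standard pair of mutually inverse maps between deductive systems and congruences and to verify that they restrict correctly to the monadic versions. I would define $\Phi:{\mathcal MCON}(A,\exists,\forall)\longrightarrow {\mathcal MDS}(A,\exists,\forall)$ by $\Phi(\Theta)=[1]_{\Theta}$, and $\Psi:{\mathcal MDS}(A,\exists,\forall)\longrightarrow {\mathcal MCON}(A,\exists,\forall)$ by $\Psi(D)=\Theta_D$, where $(x,y)\in \Theta_D$ iff $x\ra y\in D$ and $y\ra x\in D$. The two purely non-monadic facts I need, namely that $\Theta_D$ is a congruence with $[1]_{\Theta_D}=D$ and that $\Theta_{[1]_\Theta}=\Theta$, are already available in the distributive commutative setting, so the remaining work is entirely about compatibility with the quantifiers.

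First I would check that $\Phi$ is well defined, which is immediate: this is exactly Theorem \ref{mds-psBE-80}$(1)$, asserting that $[1]_{\Theta}\in {\mathcal MDS}(A,\exists,\forall)$ whenever $\Theta\in {\mathcal MCON}(A,\exists,\forall)$.

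The one genuinely new step, and the place I expect to be the main obstacle, is showing that $\Psi$ is well defined, i.e. that $\Theta_D$ is not merely a congruence (Remark \ref{mds-psBE-80-10}) but a \emph{monadic} congruence whenever $D$ is a monadic deductive system. So I would take $D\in {\mathcal MDS}(A,\exists,\forall)$ and $(x,y)\in \Theta_D$, meaning $x\ra y\in D$ and $y\ra x\in D$, and aim to produce $\forall x\ra \forall y\in D$ and $\forall y\ra \forall x\in D$. Since $A$ is commutative it is a pseudo BCK-algebra by Theorem \ref{mcomm-psBE-40}, so Proposition \ref{mpsBCK-10}$(1)$ applies and gives $\forall(x\ra y)\le \forall x\ra \forall y$. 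Because $D$ is monadic and $x\ra y\in D$, we get $\forall(x\ra y)\in D$, and since deductive systems are upward closed under $\le$, it follows that $\forall x\ra \forall y\in D$; symmetrically $\forall y\ra \forall x\in D$. Hence $(\forall x,\forall y)\in \Theta_D$, so $\Theta_D\in {\mathcal MCON}(A,\exists,\forall)$.

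Finally I would verify that the two compositions are identities. For $\Phi\circ \Psi$, starting from $D\in {\mathcal MDS}(A,\exists,\forall)$, Remark \ref{mds-psBE-80-10} yields $\Phi(\Psi(D))=[1]_{\Theta_D}=D$. For $\Psi\circ \Phi$, starting from $\Theta\in {\mathcal MCON}(A,\exists,\forall)$, I must show $\Theta_{[1]_\Theta}=\Theta$: the inclusion $\Theta\subseteq \Theta_{[1]_\Theta}$ is Lemma \ref{mds-psBE-60}$(1)$, and the reverse inclusion is Lemma \ref{mds-psBE-60}$(4)$, which is precisely where commutativity is used to recover $(x,y)\in \Theta$ from $x\ra y,\,y\ra x\in [1]_\Theta$. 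Thus $\Phi$ and $\Psi$ are mutually inverse, establishing the one-to-one correspondence.
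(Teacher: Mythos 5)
Your proposal is correct and takes essentially the same route as the paper: the forward map is Theorem \ref{mds-psBE-80}$(1)$, and the substantive step is showing that $\Theta_D$ is monadic via the inequality $\forall(x\ra y)\le \forall x\ra \forall y$, which you legitimately import from Proposition \ref{mpsBCK-10}$(1)$ (valid since a commutative pseudo BE-algebra is a pseudo BCK-algebra by Theorem \ref{mcomm-psBE-40}), whereas the paper rederives that same inequality from property (A), Corollary \ref{mpsBE-25-10}, $(M_4)$ and $(M_5)$. Your explicit check that the two maps are mutually inverse --- $[1]_{\Theta_D}=D$ from Remark \ref{mds-psBE-80-10} and $\Theta_{[1]_\Theta}=\Theta$ from Lemma \ref{mds-psBE-60}$(1)$ and $(4)$ --- is a point the paper leaves implicit, and including it is an improvement, since it is what actually makes the correspondence one-to-one rather than merely a pair of well-defined assignments.
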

\begin{proof}
Since $A$ is commutative, $A$ is a pseudo BCK-algebra, hence it satisfies property (A). \\
If $\Theta \in {\mathcal MCON}(A,\exists,\forall)$, then by Theorem \ref{mds-psBE-80}, 
$[1]_{\Theta}\in {\mathcal MDS}(A,\exists,\forall)$.  
Conversely, let $D\in {\mathcal MDS}(A,\exists,\forall)$. Since $A$ is distributive, according to 
\cite[Prop. 4.1]{Rez1}, $\Theta_D\in {\mathcal CON}(A)$.  
Let $(x,y)\in \Theta_D$, so $x\ra y, y\ra x\in D$. 
Since $D\in {\mathcal MDS}(A,\exists,\forall)$, we have $\forall(x\ra y), \forall(y\ra x)\in D$.  
From $\forall x\le x$, using property (A) we get $x\ra y\le \forall x\ra y$, so by Corollary \ref{mpsBE-25-10} and $(M_4)$, $\forall(x\ra y)\le \forall(\forall x\ra y)=\forall(\exists\forall x\ra y)=\exists\forall x\ra \forall y=
\forall x\ra \forall y$. 
Hence, $\forall x\ra \forall y\in D=[1]_{\Theta_D}$ and similarly, $\forall y\ra \forall x\in D=[1]_{\Theta_D}$. 
Using Lemma \ref{mds-psBE-60}$(4)$, we get $(\forall x,\forall y)\in \Theta_D$, hence 
$\Theta_D \in {\mathcal MCON}(A,\exists,\forall)$. 
\end{proof}

\begin{theorem} \label{mds-psBE-110} Let $(A,\exists,\forall)$ be a monadic distributive commutative pseudo BE-algebra 
and let $D\in {\mathcal MDS}(A,\exists,\forall)$. 
Then $(A/D, \exists_D, \forall_D)$ is a monadic BE-algebra, where $\exists_D x=(\exists x)/D$ and 
$\forall_D x=(\forall x)/D$. 
\end{theorem}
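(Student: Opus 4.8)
The plan is to verify that the quotient structure inherits both its BE-algebra reduct and its monadic operators from $A$ through the canonical projection $\pi:A\lo A/D$, $\pi(x)=x/D$. Since $A$ is commutative, by Theorem \ref{mcomm-psBE-40} it is a pseudo BCK-algebra, and being distributive, Remark \ref{mds-psBE-80-10} already guarantees that $(A/D,\ra,[1]_{\Theta_D})$ is a BE-algebra in which the two quotient implications coincide. Thus it remains to check that $\exists_D$ and $\forall_D$ are well-defined maps on $A/D$ and that they satisfy the monadic axioms $(M_1)$--$(M_5)$ relative to the single implication of $A/D$.

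The heart of the argument is well-definedness, i.e.\ the compatibility of the congruence $\Theta_D$ with the operators $\exists$ and $\forall$. For $\forall_D$: since $D\in {\mathcal MDS}(A,\exists,\forall)$, Theorem \ref{mds-psBE-90} yields $\Theta_D\in {\mathcal MCON}(A,\exists,\forall)$, so $(x,y)\in \Theta_D$ forces $(\forall x,\forall y)\in \Theta_D$; hence $\forall_D(x/D)=(\forall x)/D$ does not depend on the representative. For $\exists_D$: using that $\Theta_D$ is a monadic congruence together with the commutativity of $A$, Theorem \ref{mds-psBE-80}$(2)$ gives $(\exists x,\exists y)\in \Theta_D$ whenever $(x,y)\in \Theta_D$, so $\exists_D(x/D)=(\exists x)/D$ is likewise well-defined.

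With well-definedness in hand, the monadic axioms transfer routinely. Because $\pi$ is a surjective BE-algebra homomorphism satisfying $\exists_D\circ \pi=\pi\circ \exists$ and $\forall_D\circ \pi=\pi\circ \forall$, each identity is obtained by applying $\pi$ to its counterpart in $A$. For instance, $(M_1)$ follows from $(x/D)\ra \exists_D(x/D)=(x\ra \exists x)/D=1/D=[1]_{\Theta_D}$; $(M_3)$ from $\forall_D((x/D)\ra \exists_D(y/D))=(\forall(x\ra \exists y))/D=(\exists x\ra \exists y)/D=\exists_D(x/D)\ra \exists_D(y/D)$; and $(M_5)$ from $\exists_D\forall_D(x/D)=(\exists\forall x)/D=(\forall x)/D=\forall_D(x/D)$. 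Axioms $(M_2)$ and $(M_4)$ are verified in exactly the same fashion, and since the two implications agree on $A/D$ there is nothing separate to check for $\rs$.

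The main obstacle is precisely the well-definedness step: unlike the axiom verification, it cannot be read off directly from the axioms $(M_1)$--$(M_5)$ and instead relies on the passage $D\mapsto \Theta_D$ being monadic (Theorem \ref{mds-psBE-90}) and on the commutativity-driven compatibility of $\exists$ with monadic congruences (Theorem \ref{mds-psBE-80}$(2)$). Once these two facts are invoked, the remainder is a mechanical transfer of identities along the quotient map.
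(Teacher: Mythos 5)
Your proposal is correct and takes essentially the same route as the paper, whose proof consists precisely of citing the three ingredients you use: Remark \ref{mds-psBE-80-10} for the quotient BE-algebra structure, Theorem \ref{mds-psBE-90} for the compatibility of $\forall$ with $\Theta_D$, and Theorem \ref{mds-psBE-80}$(2)$ (via commutativity) for the compatibility of $\exists$. Your explicit treatment of well-definedness and the transfer of $(M_1)$--$(M_5)$ along the projection simply spells out what the paper leaves implicit.
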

\begin{proof}
It follows by Remark \ref{mds-psBE-80-10} and Theorems \ref{mds-psBE-80}, \ref{mds-psBE-90}. 
\end{proof}

\begin{remark} \label{mds-psBE-110-10} Let $(A,\ra,\rs,1)$ be a pseudo BCK-algebra. \\
$(1)$ Let $H\in {\mathcal DS}(A)$. 
Then $H\in {\mathcal DS}_n(A)$ if and only if the relation $\Theta_H$ defined by \\
$\hspace*{3cm}$ $(x,y)\in \Theta_H$ iff $x\ra y\in H$ and $y\ra x\in H$ \\
is a congruence on $A$. In this case, $[1]_{\Theta_H}=H$ (\cite[Prop. 2.2.2]{Kuhr6}). \\
$(2)$ We say that $\Theta\in {\mathcal CON}(A)$ is a \emph{relative congruence} on $A$ if the quotient algebra 
$(A/{\Theta},\ra,\rs,[1]_{\Theta})$ is a pseudo BCK-algebra. 
Denote by ${\mathcal CON}_r(A)$ the set of all relative congruences on $A$. 
There is a one-to-one correspondence between ${\mathcal CON}_r(A)$ and ${\mathcal DS}_n(A)$ 
(\cite[Prop. 2.2.4]{Kuhr6}). \\
$(3)$ Let $(A,\wedge,\ra,\rs,1)$ be a pseudo BCK-meet-semilattice. An equivalence relation $\Theta$ on $A$ is 
called a congruence relation on $A$ if, for any $(x,y), (u,v)\in \Theta$ we have 
$(x\ra u,y\ra v), (x\rs u,y\rs v), (x\wedge u,y\wedge v)\in \Theta$.  
Since any pseudo BCK-meet-semilattice is a pseudo BE-algebra, then the results proved in 
Lemma \ref{mds-psBE-60}$(1)$-$(3)$ and Theorem \ref{mds-psBE-80}$(1)$ for pseudo BE-algebras are also valid 
for the case of pseudo BCK-meet-semilattices. 
\end{remark}

\begin{proposition} \label{mds-psBE-120} Let $A$ be a pseudo BCK-meet-semilattice and let $\Theta \in {\mathcal CON}(A)$. 
The following hold, for all $(x,y)\in A:$ \\
$(1)$ $x\ra y, y\ra x\in [1]_{\Theta}$ implies $(x,y)\in \Theta;$ \\
$(2)$ $x\rs y, y\rs x\in [1]_{\Theta}$ implies $(x,y)\in \Theta$.  
\end{proposition}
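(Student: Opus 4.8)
The plan is to mirror the proof of Lemma \ref{mds-psBE-60}$(4)$, $(5)$, but since a pseudo BCK-meet-semilattice need not be commutative, the role played there by the commutativity identity $x\Cup_1 y=y\Cup_1 x$ will be taken over by the extra compatibility of $\Theta$ with the meet recorded in Remark \ref{mds-psBE-110-10}$(3)$: for all $(a,b),(c,d)\in\Theta$ one has $(a\wedge c,b\wedge d)\in\Theta$. The goal is to establish $(x,x\wedge y)\in\Theta$ and $(y,x\wedge y)\in\Theta$, whence $(x,y)\in\Theta$ follows by symmetry and transitivity of $\Theta$.

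For $(1)$ I would first exploit the hypothesis $x\ra y\in[1]_{\Theta}$. Using reflexivity $(y,y)\in\Theta$ together with $(x\ra y,1)\in\Theta$ and the identity $1\rs y=y$, the congruence property gives $\big((x\ra y)\rs y,\,y\big)\in\Theta$. Writing $w=(x\ra y)\rs y$, Proposition \ref{psBE-20}$(5)$ yields $x\le w$, so that $x\wedge w=x$. Applying the meet-compatibility of Remark \ref{mds-psBE-110-10}$(3)$ to the pairs $(x,x)\in\Theta$ and $(w,y)\in\Theta$ produces $(x\wedge w,\,x\wedge y)\in\Theta$, i.e. $(x,x\wedge y)\in\Theta$. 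Symmetrically, from $y\ra x\in[1]_{\Theta}$ I would obtain $\big((y\ra x)\rs x,\,x\big)\in\Theta$, note $y\le (y\ra x)\rs x$ by Proposition \ref{psBE-20}$(5)$, and conclude $(y,x\wedge y)\in\Theta$. Combining the two gives $(x,y)\in\Theta$.

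Part $(2)$ runs identically after swapping the two implications: from $x\rs y\in[1]_{\Theta}$ one uses $1\ra y=y$ and the inequality $x\le (x\rs y)\ra y$ (the second half of Proposition \ref{psBE-20}$(5)$) to reach $(x,x\wedge y)\in\Theta$, and symmetrically $(y,x\wedge y)\in\Theta$ from $y\rs x\in[1]_{\Theta}$.

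The only delicate point is the replacement of commutativity. In the commutative setting one identifies $(x\ra y)\rs y$ with $(y\ra x)\rs x$ and finishes at once, whereas here these two elements genuinely differ and must instead be glued through the common meet $x\wedge y$. The step that makes this work is the equality $x\wedge\big((x\ra y)\rs y\big)=x$, which relies precisely on the comparison $x\le (x\ra y)\rs y$ from Proposition \ref{psBE-20}$(5)$; checking the hypotheses of the meet-compatibility of Remark \ref{mds-psBE-110-10}$(3)$ at each application is then routine.
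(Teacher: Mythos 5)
Your proof is correct and coincides with the paper's own argument: the paper likewise obtains $\big((x\ra y)\rs y,\,y\big)\in\Theta$ from the congruence property, applies meet-compatibility against $(x,x)\in\Theta$, and uses $x\le (x\ra y)\rs y$ from Proposition \ref{psBE-20}$(5)$ to collapse $\big((x\ra y)\rs y\big)\wedge x$ to $x$, then finishes by symmetry and transitivity through $x\wedge y$ (with part $(2)$ treated as the exact dual, as you spell out). Your observation that the meet-compatibility of $\Theta$ substitutes for the commutativity used in Lemma \ref{mds-psBE-60}$(4)$ is precisely the point of this proposition.
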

\begin{proof}
$(1)$ From $(x\ra y,1), (y,y)\in \Theta$ we get $((x\ra y)\rs y,1\rs y)\in \Theta$, that is 
$((x\ra y)\rs y,y)\in \Theta$. 
Moreover, $((x\ra y)\rs y,y), (x,x)\in \Theta$ implies $(((x\ra y)\rs y)\wedge x,y\wedge x)\in \Theta$, hence, 
by Proposition \ref{psBE-20}$(5)$, we get $(x,y\wedge x)\in \Theta$. 
Similarly, $(y\ra x,1), (x,x)\in \Theta$ implies $((y\ra x)\rs x,1\rs x)\in \Theta$, so 
$((y\ra x)\rs x,x)\in \Theta$.
From $((y\ra x)\rs x,x), (x,x)\in \Theta$, we have  $(((y\ra x)\rs x)\wedge y, x\wedge y)\in \Theta$, so 
$(y,x\wedge y)\in \Theta$. 
Finally, from $(x,x\wedge y), (x\wedge y, y)\in \Theta$, we get $(x,y)\in \Theta$. \\
$(2)$ Similarly as $(1)$. 
\end{proof}

\begin{proposition} \label{mds-psBE-120-10} Let $(A,\exists,\forall)$ be a monadic pseudo BCK-meet-semilattice and 
let $\Theta \in {\mathcal MCON}(A,\exists,\forall)$. Then $(\exists x, \exists y)\in \Theta$. 
\end{proposition}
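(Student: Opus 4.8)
The plan is to run the argument of Theorem~\ref{mds-psBE-80}$(2)$ almost verbatim, with one decisive change at the end: since $A$ is now only assumed to be a pseudo BCK-meet-semilattice (and is not assumed commutative), the step that previously used commutativity together with Lemma~\ref{mds-psBE-60}$(4)$ will instead be handled by Proposition~\ref{mds-psBE-120}$(1)$, which is exactly the meet-semilattice replacement for that implication. First I would record the structural facts needed. Being a pseudo BCK-algebra, $A$ satisfies property (M), and by Corollary~\ref{mpsBE-25-10}$(2)$ both $\forall$ and $\exists$ are isotone. Moreover, by Remark~\ref{mds-psBE-110-10}$(3)$, the conclusions of Lemma~\ref{mds-psBE-60}$(1)$--$(3)$ and of Theorem~\ref{mds-psBE-80}$(1)$ carry over unchanged to pseudo BCK-meet-semilattices, so $[1]_{\Theta}\in{\mathcal MDS}(A,\exists,\forall)$; in particular $[1]_{\Theta}$ is a deductive system, hence upward closed.

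Next I would push the hypothesis $(x,y)\in\Theta$ through these facts. By Lemma~\ref{mds-psBE-60}$(1)$ we have $x\ra y,\,y\ra x\in[1]_{\Theta}$, and since $[1]_{\Theta}$ is a monadic deductive system this yields $\forall(x\ra y),\,\forall(y\ra x)\in[1]_{\Theta}$. The core computation is to show that these closed elements dominate the arrows between the $\exists$-images. Using $(M_1)$ we have $y\le\exists y$; property (M) then gives $x\ra y\le x\ra\exists y$; and isotonicity of $\forall$ (Corollary~\ref{mpsBE-25-10}) together with $(M_3)$ gives
\[
\forall(x\ra y)\le\forall(x\ra\exists y)=\exists x\ra\exists y.
\]
Because $[1]_{\Theta}$ is upward closed, $\exists x\ra\exists y\in[1]_{\Theta}$, and the symmetric computation yields $\exists y\ra\exists x\in[1]_{\Theta}$.

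Finally I would apply Proposition~\ref{mds-psBE-120}$(1)$ with $\exists x$ and $\exists y$ in place of $x$ and $y$: from $\exists x\ra\exists y,\,\exists y\ra\exists x\in[1]_{\Theta}$ it follows that $(\exists x,\exists y)\in\Theta$, which is the desired conclusion. I do not anticipate a genuine obstacle; the only points that require care are confirming that the imported lemmas really do transfer to the meet-semilattice setting, which is precisely what Remark~\ref{mds-psBE-110-10}$(3)$ certifies, and invoking Proposition~\ref{mds-psBE-120}$(1)$ rather than Lemma~\ref{mds-psBE-60}$(4)$ so that no commutativity assumption is inadvertently used. One could equally run the dual argument with $\rs$ and Proposition~\ref{mds-psBE-120}$(2)$, but a single direction suffices.
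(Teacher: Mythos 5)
Your proposal is correct and follows essentially the same route as the paper's own proof: the same chain $x\ra y,\,y\ra x\in[1]_{\Theta}$ via Lemma~\ref{mds-psBE-60}$(1)$, closure under $\forall$ via Theorem~\ref{mds-psBE-80}$(1)$, the estimate $\forall(x\ra y)\le\forall(x\ra\exists y)=\exists x\ra\exists y$ from $(M_1)$, monotonicity and $(M_3)$, and finally Proposition~\ref{mds-psBE-120} in place of the commutativity-based Lemma~\ref{mds-psBE-60}$(4)$. Your write-up in fact silently corrects a typo in the paper's proof, which prints $\forall(x\ra\exists x)$ where $\forall(x\ra\exists y)$ is meant.
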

\begin{proof}
Let $(x,y)\in \Theta$. By Lemma \ref{mds-psBE-60}$(1)$ we have $x\ra y, y\ra x\in [1]_{\Theta}$. 
Since by Theorem \ref{mds-psBE-80}$(1)$, $[1]_{\Theta}\in {\mathcal MDS}(A,\exists,\forall)$, we get 
$\forall(x\ra y),\forall(y\ra x)\in [1]_{\Theta}$. 
By $(M_1)$, $y\le \exists y$, so $x\ra y\le x\ra \exists y$. 
Hence, by Corollary \ref{mpsBE-25-10} and $(M_3)$ we have 
$\forall(x\ra y)\le \forall(x\ra \exists x)=\exists x\ra \exists y$. 
Since $[1]_{\Theta}\in {\mathcal DS}(A)$, we get $\exists x\ra \exists y\in [1]_{\Theta}$.
Similarly, $\exists y\ra \exists x\in [1]_{\Theta}$. 
Applying Proposition \ref{mds-psBE-120}, we conclude that $(\exists x, \exists y)\in \Theta$.
\end{proof}

\begin{theorem} \label{mds-psBE-130} Let $(A,\exists,\forall)$ be a monadic pseudo BCK-meet-semilattice.
Then there is a one-to-one correspondence between ${\mathcal MCON}_r(A,\exists,\forall)$ and 
${\mathcal MDS}_n(A,\exists,\forall)$. 
\end{theorem}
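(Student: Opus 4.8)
The plan is to show that the non-monadic bijection between relative congruences and normal deductive systems, recalled in Remark \ref{mds-psBE-110-10}$(2)$, restricts to a bijection between the monadic objects. Concretely, I would assign to each $D\in{\mathcal MDS}_n(A,\exists,\forall)$ the relation $\Theta_D$ given by $(x,y)\in\Theta_D$ iff $x\ra y\in D$ and $y\ra x\in D$, and to each $\Theta\in{\mathcal MCON}_r(A,\exists,\forall)$ the set $[1]_\Theta$. By the cited correspondence these two maps are already mutually inverse as maps between ${\mathcal CON}_r(A)$ and ${\mathcal DS}_n(A)$ (and $[1]_{\Theta_D}=D$, $\Theta_{[1]_\Theta}=\Theta$ by Remark \ref{mds-psBE-110-10}$(1)$,$(2)$), so the only thing left is to check that each assignment carries a monadic object to a monadic object; the inverse identities are untouched by the extra monadic requirement.

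First I would treat the direction $\Theta\mapsto[1]_\Theta$. Given $\Theta\in{\mathcal MCON}_r(A,\exists,\forall)$, the relation $\Theta$ is in particular a relative congruence, so $[1]_\Theta\in{\mathcal DS}_n(A)$ by Remark \ref{mds-psBE-110-10}$(2)$. That $[1]_\Theta$ is moreover monadic, i.e. $x\in[1]_\Theta$ implies $\forall x\in[1]_\Theta$, is exactly the content of Theorem \ref{mds-psBE-80}$(1)$, which by Remark \ref{mds-psBE-110-10}$(3)$ remains valid for pseudo BCK-meet-semilattices. Hence $[1]_\Theta\in{\mathcal MDS}_n(A,\exists,\forall)$.

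The harder direction is $D\mapsto\Theta_D$, and I expect the monadicity of $\Theta_D$ to be the main obstacle. By the non-monadic correspondence $\Theta_D\in{\mathcal CON}_r(A)$, so it remains to verify that $(x,y)\in\Theta_D$ forces $(\forall x,\forall y)\in\Theta_D$. Since any pseudo BCK-algebra satisfies property (A) (Definition \ref{psBE-90}), I would argue as follows. Let $(x,y)\in\Theta_D$, i.e. $x\ra y,\,y\ra x\in D$. As $D$ is monadic, $\forall(x\ra y),\,\forall(y\ra x)\in D$. From $\forall x\le x$ and property (A) we get $x\ra y\le\forall x\ra y$, whence by the isotonicity of $\forall$ (Corollary \ref{mpsBE-25-10}), $(M_4)$ and $(M_5)$,
\[
\forall(x\ra y)\le\forall(\forall x\ra y)=\forall(\exists\forall x\ra y)=\exists\forall x\ra\forall y=\forall x\ra\forall y .
\]
Because a deductive system is upward closed, $\forall x\ra\forall y\in D$, and symmetrically $\forall y\ra\forall x\in D$; thus $(\forall x,\forall y)\in\Theta_D$ and $\Theta_D\in{\mathcal MCON}_r(A,\exists,\forall)$.

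Combining the two directions, the assignments $D\mapsto\Theta_D$ and $\Theta\mapsto[1]_\Theta$ restrict to mutually inverse maps between ${\mathcal MDS}_n(A,\exists,\forall)$ and ${\mathcal MCON}_r(A,\exists,\forall)$, which establishes the one-to-one correspondence. I emphasize that the only genuinely new computation is the displayed inequality, which is the meet-semilattice counterpart of the step already carried out in the proof of Theorem \ref{mds-psBE-90}; the rest is a transfer of the established non-monadic bijection, legitimized by the fact that the meet-semilattice analogues of Lemma \ref{mds-psBE-60} and Theorem \ref{mds-psBE-80}$(1)$ are available through Remark \ref{mds-psBE-110-10}$(3)$ and Proposition \ref{mds-psBE-120}.
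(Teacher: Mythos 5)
Your proof is correct and follows essentially the same route as the paper: it transfers the non-monadic bijection of Remark \ref{mds-psBE-110-10}$(2)$, uses Theorem \ref{mds-psBE-80}$(1)$ (via Remark \ref{mds-psBE-110-10}$(3)$) for the direction $\Theta\mapsto[1]_\Theta$, and verifies monadicity of $\Theta_D$ by the very same chain $\forall(x\ra y)\le\forall(\forall x\ra y)=\forall(\exists\forall x\ra y)=\exists\forall x\ra\forall y=\forall x\ra\forall y$ appearing in the paper's proof. Your explicit appeal to property (A) and to the mutual-inverse identities merely spells out steps the paper leaves implicit, so there is nothing to add.
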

\begin{proof} 
If $\Theta \in {\mathcal MCON}_r(A,\exists,\forall)$, then by Theorem \ref {mds-psBE-80} and 
Remark \ref{mds-psBE-110-10}, $[1]_{\Theta}\in {\mathcal MDS}_n(A,\exists,\forall)$.  
Conversely, let $D\in {\mathcal MDS}_n(A,\exists,\forall)$. Applying again Remark \ref{mds-psBE-110-10}, 
$\Theta_D\in {\mathcal CON}_r(A)$.  
Let $(x,y)\in \Theta_D$, so $x\ra y, y\ra x\in D$. 
Since $D\in {\mathcal MDS}_n(A,\exists,\forall)$, we have $\forall(x\ra y), \forall(y\ra x)\in D$.  
From $\forall x\le x$, we get $x\ra y\le \forall x\ra y$, so by Corollary \ref{mpsBE-25-10}, $(M_4)$ and $(M_5)$, 
$\forall(x\ra y)\le \forall(\forall x\ra y)=\forall(\exists\forall x\ra y)=\exists\forall x\ra \forall y=
\forall x\ra \forall y$. 
Hence, $\forall x\ra \forall y\in D=[1]_{\Theta_D}$ and similarly, $\forall y\ra \forall x\in D=[1]_{\Theta_D}$. 
Using Proposition \ref{mds-psBE-120}, we get $(\forall x,\forall y)\in \Theta_D$, hence 
$\Theta_D \in {\mathcal MCON}_r(A,\exists,\forall)$. 
\end{proof}

\begin{theorem} \label{mds-psBE-140} Let $(A,\exists,\forall)$ be a monadic pseudo BCK-meet-semilattice  
and let $D\in {\mathcal MDS}_n(A,\exists,\forall)$. 
Then $(A/D, \exists_D, \forall_D)$ is a monadic pseudo BCK-meet-semilattice, where $\exists_D x=(\exists x)/D$ and 
$\forall_D x=(\forall x)/D$. 
\end{theorem}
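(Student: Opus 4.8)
The plan is to establish first that $A/D$ carries a pseudo BCK-meet-semilattice structure, then that the two maps $\exists_D$ and $\forall_D$ are well defined on the quotient, and finally to transfer the axioms $(M_1)$--$(M_5)$ from $A$ representative-wise. For the underlying structure, since $D\in {\mathcal MDS}_n(A,\exists,\forall)\subseteq {\mathcal DS}_n(A)$, Remark \ref{mds-psBE-110-10} gives that $\Theta_D$ is a relative congruence with $[1]_{\Theta_D}=D$ and that $A/\Theta_D=A/D$ is a pseudo BCK-algebra; because $\Theta_D$ also respects $\wedge$, the meet descends to $A/D$ via $(x/D)\wedge (y/D)=(x\wedge y)/D$, so $A/D$ is a pseudo BCK-meet-semilattice.

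The crucial step, and the main obstacle, is the well-definedness of $\exists_D$ and $\forall_D$, i.e. that $(x,y)\in \Theta_D$ forces both $(\forall x,\forall y)\in \Theta_D$ and $(\exists x,\exists y)\in \Theta_D$. For $\forall$ I would invoke the correspondence established in Theorem \ref{mds-psBE-130}: since $D$ is a monadic normal deductive system, $\Theta_D\in {\mathcal MCON}_r(A,\exists,\forall)$, which is exactly the statement that $\Theta_D$ preserves $\forall$. For $\exists$ I would appeal to Proposition \ref{mds-psBE-120-10}, guaranteeing that any monadic congruence on a monadic pseudo BCK-meet-semilattice preserves $\exists$ as well. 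Together these yield $(\forall x)/D=(\forall y)/D$ and $(\exists x)/D=(\exists y)/D$, so both operators are unambiguously defined on $A/D$.

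It then remains to verify $(M_1)$--$(M_5)$, and each reduces immediately to the corresponding axiom in $A$ because all operations and both quantifiers act on representatives. For instance $(M_1)$ follows from $(x/D)\ra \exists_D(x/D)=(x\ra \exists x)/D=1/D$; $(M_2)$ from $\forall_D(x/D)\ra (x/D)=(\forall x\ra x)/D=1/D$; $(M_3)$ from $\forall_D((x/D)\ra \exists_D(y/D))=(\forall(x\ra \exists y))/D=(\exists x\ra \exists y)/D=\exists_D(x/D)\ra \exists_D(y/D)$; $(M_4)$ analogously from $\forall(\exists x\ra y)=\exists x\ra \forall y$; and $(M_5)$ from $\exists_D\forall_D(x/D)=(\exists\forall x)/D=(\forall x)/D=\forall_D(x/D)$. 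The $\rs$-versions are identical. Hence $(A/D,\exists_D,\forall_D)$ is a monadic pseudo BCK-meet-semilattice. Since the entire content is packaged in Remark \ref{mds-psBE-110-10}, Theorem \ref{mds-psBE-80}, Theorem \ref{mds-psBE-130} and Proposition \ref{mds-psBE-120-10}, the argument is essentially a bookkeeping assembly of these facts, with the preservation of both quantifiers by $\Theta_D$ being the only nontrivial ingredient.
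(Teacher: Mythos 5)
Your proposal is correct and takes essentially the same approach as the paper, whose proof consists precisely of citing Remark \ref{mds-psBE-110-10}, Proposition \ref{mds-psBE-120-10} and Theorem \ref{mds-psBE-130}. You have simply unpacked what that one-line proof leaves implicit: the quotient structure from the remark, the well-definedness of $\forall_D$ via $\Theta_D\in {\mathcal MCON}_r(A,\exists,\forall)$ and of $\exists_D$ via Proposition \ref{mds-psBE-120-10}, and the representative-wise verification of $(M_1)$--$(M_5)$.
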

\begin{proof}
It follows by Remark \ref{mds-psBE-110-10}, Proposition \ref{mds-psBE-120-10} and Theorem \ref{mds-psBE-130}. 
\end{proof}




$\vspace*{5mm}$

\vspace*{3mm}

\end{document}